\author{L. L. de Lima}
\address{Departamento de Matem\'atica\\ Universidade Federal do Cear\'a\\ Brazil}
\author{P. Piccione}
\address{Departamento de Matem\'atica\\ Universidade de S\~ao Paulo\\ Brazil}
\author{M. Zedda}
\address{Universit\`a degli Studi di Cagliari\\ Italy}
\curraddr{Departamento de Matem\'atica\\ Universidade de S\~ao Paulo\\ Brazil}
\title[Bifurcation of solutions of the Yamabe problem]{On bifurcation of solutions of the Yamabe problem\\ in product manifolds}
\date{Revised version of October 18th, 2011}
\subjclass[2000]{58E11, 58J55, 58E09}
\thanks{The first author is partially sponsored by CNPq and Funcap, Brazil. The second author is partially sponsored
by CNPq and Fapesp, Brazil. The third author is supported by \emph{RAS}
through a grant financed with the ``Sardinia PO FSE 2007-2013'' funds and
provided according to the L.R.\ 7/2007.}
\begin{document}

\theoremstyle{plain}\newtheorem*{teon}{Theorem}
\theoremstyle{definition}\newtheorem*{defin*}{Definition}
\theoremstyle{plain}\newtheorem{teo}{Theorem}[section]
\theoremstyle{plain}\newtheorem{prop}[teo]{Proposition}
\theoremstyle{plain}\newtheorem*{prop_n}{Proposition}
\theoremstyle{plain}\newtheorem{lem}[teo]{Lemma}
\theoremstyle{plain}\newtheorem{cor}[teo]{Corollary}
\theoremstyle{definition}\newtheorem{defin}[teo]{Definition}
\theoremstyle{remark}\newtheorem{rem}[teo]{Remark}
\theoremstyle{plain} \newtheorem{assum}[teo]{Assumption}
\swapnumbers
\theoremstyle{definition}\newtheorem{example}{Example}[section]
\theoremstyle{plain}\newtheorem*{acknowledgement}{Acknowledgements}
\theoremstyle{definition}\newtheorem*{notation}{Notation}

\maketitle
\begin{abstract}
We study local rigidity and multiplicity of constant scalar curvature metrics in arbitrary products of compact manifolds.
Using (equivariant) bifurcation theory we determine the existence of infinitely many metrics
that are accumulation points of pairwise non homothetic solutions of the Yamabe problem.
Using local rigidity and some compactness results for solutions of the Yamabe problem, we also exhibit new examples
of conformal classes (with positive Yamabe constant) for which uniqueness holds.
\end{abstract}

\begin{section}{Introduction}
The classical Yamabe problem asks for the existence of constant scalar curvature metrics in any given conformal class
of Riemannian metrics on a compact manifold $M$. These metrics can be characterized variationally as critical points
of the Hilbert--Einstein functional on conformal classes. The solution of Yamabe's problem, due to combined efforts
of Yamabe \cite{Yam60}, Trudinger \cite{Tru68}, Aubin \cite{Aub76} and Schoen \cite{Schoen84}, provides \emph{minimizers}
of the Hilbert--Einstein functional in each conformal
class. For instance, Einstein metrics are minima of the functional in their conformal class and in fact,
except for round metrics on spheres, they are the unique metrics in their conformal class having constant scalar curvature,
by a theorem of Obata \cite{Oba72}. It is also interesting to observe that, generically, minima of the
Hilbert--Einstein functional in conformal classes are unique, see \cite{And05}.
However, in many cases a rich variety of constant scalar curvature metrics arise as critical points that are not
necessarily minimizers, and it is a very interesting question to classify all critical points.
In this paper, we propose to use bifurcation theory to determine the existence of multiple constant
scalar curvature metrics on products of compact manifolds.
Multiplicity of solutions of the Yamabe problem in product manifolds has been studied in the literature,
and several results have been obtained in the special case of products with round spheres, see for instance
\cite{HebVau, Koba87, Petean08, Schoen87}. A somewhat different multiplicity result can be found in \cite{Pol93};
bifurcation theory is used in \cite{JinLiXu} to obtain a multiplicity result for the Yamabe equation on the sphere
$\mathds S^N$.
In this paper we consider products of arbitrary factors with constant, but
not necessarily positive, scalar curvature, and we prove a multiplicity result in an infinite number of conformal classes.

Let us describe our result more precisely. Given compact Riemannian manifolds $(M_0,\mathbf g^{(0)})$ and
$(M_1,\mathbf g^{(1)})$, both having positive constant scalar curvature, one consider the \emph{trivial} path
$\mathbf g_\lambda$, $\lambda\in\left]0,+\infty\right[$, of constant scalar curvature metrics on the product
$M=M_0\times M_1$ defined by $\mathbf g_\lambda=\mathbf g^{(0)}\oplus\lambda\,\mathbf g^{(1)}$.
The main results of the paper (Corollary~\ref{thm:cordeginstants}, Theorem~\ref{thm:Yamabebif}) state that there is a
countable set $\Lambda\subset\left]0,+\infty\right[$ that accumulates (only) at $0$ and at $+\infty$ such that:
\begin{itemize}
\item the family $(\mathbf g_\lambda)_\lambda$ is \emph{locally rigid} at all points in $\left]0,+\infty\right[\setminus\Lambda$, i.e.,
for all $\lambda\in\left]0,+\infty\right[\setminus\Lambda$, any constant scalar curvature metric $\mathbf g$ on $M$
which is sufficiently $\mathcal C^{2,\alpha}$-close to $\mathbf g_\lambda$ must be homothetic to some element of the trivial family;
\smallskip

\item at all $\lambda_*\in\Lambda$, except for a finite subset, there is a bifurcating branch of constant scalar curvature metrics issuing from the trivial
branch at $\mathbf g_{\lambda_*}$, and that consists of metrics that do not belong to the trivial family.
\end{itemize}
Rigidity and bifurcation results are also given when the scalar curvatures of $\mathbf g^{(0)}$ and of $\mathbf g^{(1)}$
are not both positive, see Theorem~\ref{thm:nonpositivecase}.
Based on these results and other known facts about Yamabe metrics, one obtains some uniqueness and multiplicity results for
constant scalar curvature metrics in fixed conformal classes, see Section~\ref{sub:multiplicity}.
For instance, an interesting consequence of our bifurcation result yields the following: if $(M_1,\mathbf g^{(1)})$
has positive scalar curvature, then there is a subset $F\subset\left]0,1\right]$ that has a countable
number of accumulation points tending to $0$ such that for all $\lambda\in F$, there are at least \emph{three} distinct constant unit volume scalar curvature metrics
in the conformal class of $\mathbf g_\lambda=\mathbf g^{(0)}\oplus\lambda\,\mathbf g^{(1)}$, see Proposition~\ref{thm:3CSC}.
Finally, using some recent compactness results for solutions of the Yamabe problem, see \cite{KhuMarSch, LiZhang, Mar}, we establish
also the uniqueness of constant scalar curvature metrics in conformal classes in product of spheres or, more generally,
in product of compact Einstein manifolds with positive scalar curvature, see Subsection~\ref{sub:productofspheres}.
\smallskip

The result is obtained as an application of a celebrated abstract bifurcation result of Smoller and Wasserman \cite{SmoWas},
which uses an assumption on the jump of the Morse index for a path of solutions of a family of variational problems.
In the present paper, we consider the variational structure of the Yamabe problem given by the Hilbert--Einstein functional,
defined on the set of metrics of volume $1$ in a given conformal class of metrics.
A very interesting observation on the Yamabe variational problem considered here is that the set $\Lambda$ consisting
of instants when the second variation of the Hilbert--Einstein functional degenerates do not correspond necessarily to
jumps of the Morse index. Namely, the eigenvalues of the Jacobi operator for this functional are arranged into sequences
of functions that are monotonic with respect to $\lambda$, but both increasing and decreasing functions appear, see Lemma~\ref{thm:zerossigmaij}.
Thus, one can have a finite number of degeneracy instants $\lambda\in\Lambda$ where a compensation occurs, and no jump of the Morse index is produced
by the passage through $0$ of the eigenvalues. This raises an extremely interesting question on whether one can have
local rigidity also at this sort of \emph{neutral} degeneracy instants. In the last part of the paper we study this
question, and we setup an equivariant framework to determine some sufficient conditions that guarantee bifurcation
at \emph{every} degeneracy instant. We define the notion of \emph{harmonically freeness} for an isometric action of a Lie
group $G$ on a Riemannian manifold $M$, see Definition~\ref{thm:defnontrivialgroupaction},
which roughly speaking means that the corresponding isotropic representations of $G$ on
distinct eigenspaces of the Laplace--Beltrami operator of $M$ should be direct sum of non equivalent irreducible representations.
The class of manifolds that admit a harmonically free isometric action of a Lie group includes, for instance,
all compact symmetric spaces of rank $1$, see Example~\ref{exa:symrank1}.
We obtain the result that, if one of the two factors $M_0$ or $M_1$ admits a harmonically free isometric action of some
Lie group, then bifurcation of the family $(\mathbf g_\lambda)_\lambda$ must occur at every degeneracy instant (Proposition~\ref{thm:bifurcationneutral}).
This  is obtained using the equivariant abstract bifurcation result of Smoller and Wasserman \cite{SmoWas}, by studying
the representations of the Lie group $G$ on the eigenspaces of the Jacobi operator of the Hilbert--Einstein functional.
\smallskip

The paper is organized as follows. Section~\ref{sec:varsetting} contains the essential facts on the variational framework of
the constant scalar curvature problem in Riemannian manifolds; the basic references for details are \cite{Besse, LeBrun99, Schoen87}.
Section~\ref{sec:locrigbifYamabeabstr} contains statements and proofs of a local rigidity theorem (implicit function theorem)
and both the simple and the equivariant bifurcation result for the Yamabe variational problem.
In Section~\ref{thm:bifproduct} we study explicitly the case of product manifolds and prove our main results.
Appendix~\ref{sec:appfiber} contains formal statements of an implicit function theorem and of two bifurcation
theorems for variational problems defined on the total space of a fiber bundle, which are best suited for
the theory developed in this paper.
\smallskip

It is a pleasure to thank Jimmy Petean, Fernando Cod\'a Marques and Renato Ghini Bettiol
for giving many useful suggestions on a preliminary version of our manuscript.
\end{section}

\begin{section}{The variational setting for the Yamabe problem}
\label{sec:varsetting}
We will denote throughout by $M$ a compact manifold without boundary, with $m=\mathrm{dim}(M)\ge3$, and by $\mathbf g_\mathrm R$
 an auxiliary Riemannian metric on $M$. The metric $\mathbf g_\mathrm R$ induces inner products and norms in all spaces of tensors on $M$,
the Levi--Civita connection $\nabla_\mathrm R$ of $\mathbf g_\mathrm R$ induces a connection in all vector spaces of tensors fields on $M$.
Let $\mathcal S^k(M)$ be the space of all symmetric $(0,2)$-tensors of class $\mathcal C^k$ on $M$, with
$k\ge2$; this is a Banach space when endowed with the norm:
\[\Vert\tau\Vert_{\mathcal C^k}=\max_{j=0,\ldots,k}\left[\max_{p\in M}\big\Vert{\nabla_{\mathrm R}}^{(j)}\tau(p)\big\Vert_\mathrm R\right].\]
Let $\mathcal M^k(M)$ denote the open cone of $\mathcal S^k(M)$ consisting of all Riemannian metrics on $M$;
for all $\mathbf g\in\mathcal M^k(M)$, the tangent space $T_\mathbf g\mathcal M^k(M)$ is identified with the Banach space
$\mathcal S^k(M)$.
Given $\mathbf g\in\mathcal M^k(M)$,
the \emph{conformal class of $\mathbf g$}, denoted by $\big[\mathbf g\big]_k$ is the subset of $\mathcal M^k(M)$ consisting of metrics
that are conformal to $\mathbf g$. For all $\mathbf g$, $\big[\mathbf g\big]_k$ is an open subset of a Banach subspace of $\mathcal S^k$, and
thus it inherits a natural differential structure. As a matter of facts, in order to comply with certain Fredholmness assumptions in Bifurcation Theory, we
need to introduce conformal classes of metrics having a H\"older type regularity $\mathcal C^{k,\alpha}$. To this aim, the most convenient
definition is to consider a smooth\footnote{%
In fact, in most situations it will suffice to assume regularity $\mathcal C^{k+1}$ for $\mathbf g$.} metric $\mathbf g$ on $M$, and setting:
\[\big[\mathbf g\big]_{k,\alpha}=\big\{\psi\cdot\mathbf g:\psi\in\mathcal C^{k,\alpha}(M),\ \psi>0\big\};\]
thus, $\big[\mathbf g\big]_{k,\alpha}$ can be identified with the open subset of the Banach space $\mathcal C^{k,\alpha}(M)$ consisting
of positive functions.
The differential structure on $\big[\mathbf g\big]_{k,\alpha}$ is the one induced by $\mathcal C^{k,\alpha}(M)$.

For $\mathbf g\in\mathcal M^k(M)$, we will denote by $\nu_\mathbf g$ the volume form (or density, if $M$ is not orientable) of $\mathbf g$, by $\mathrm{Ric}_\mathbf g$
the Ricci curvature of $\mathbf g$, and by $\kappa_\mathbf g$
its scalar curvature function, which is a function of class $\mathcal C^{k-2}$ on $M$.

The \emph{volume function} $\mathcal V$ on $\mathcal M^k(M)$ is defined by:
\[\mathcal V(\mathbf g)=\int_M\nu_\mathbf g.\]
Observe that $\mathcal V(\mathbf g)$ is smooth, and its differential is given by:
\begin{equation}\label{eq:difvolume}
\mathrm d\mathcal V(\mathbf g)\mathbf h=\tfrac12\int_M\mathrm{tr}_\mathbf g(\mathbf h)\,\nu_\mathbf g,
\end{equation}
for all $\mathbf h\in\mathcal S^k(M)$.
Let $\mathcal M_1^k(M)$ denote the subset of $\mathcal M^k(M)$ of those metrics $\mathbf g$ such that $\mathcal V(g)=1$;
let us also consider the scale-invariant \emph{Hilbert--Einstein functional} on $\mathcal M^k(M)$, which is
the function $\mathcal A:\mathcal M^k(M)\to\mathds R$ defined by:
\[\mathcal A(\mathbf g)=\mathcal V(\mathbf g)^{\frac{2-m}{m}}\int_M\kappa_{\mathbf g}\,\nu_{\mathbf g}.\]
We summarize here some well known facts about the critical points of $\mathcal A$:
\begin{prop}\label{thm:summainfactscritpt}\hfill
\begin{itemize}
\item[(a)] $\mathcal M_1^k(M)$ is a smooth embedded codimension $1$ submanifold of $\mathcal M^k(M)$.
\smallskip

\item[(b)] $\mathcal M_1^{k,\alpha}(M,\mathbf g)=\mathcal M_1^k(M)\cap\big[\mathbf g\big]_{k,\alpha}$
is a smooth embedded codimension $1$ submanifold of $\big[\mathbf g\big]_{k,\alpha}$.\hfill\break For $\mathbf g_0\in\mathcal M_1^{k,\alpha}(M,\mathbf g)$,
the tangent space $T_{\mathbf g_0}\mathcal M_1^{k,\alpha}(M,\mathbf g)$ is identified with the closed subspace
$\mathcal C^{k,\alpha}_*(M,\mathbf g_0)$ of $\mathcal C^{k,\alpha}(M)$ given by all functions $f$ such that
$\int_Mf\,\nu_{\mathbf g_0}=0$.
\smallskip

\item[(c)] $\mathcal A$ is a smooth functional on $\mathcal M^k(M)$ and on $\big[\mathbf g\big]_{k,\alpha}$.
\smallskip

\item[(d)] the critical points of $\mathcal A$ on $\mathcal M_1^k(M)$ are the \emph{Einstein metrics} of volume $1$ on $M$.
\smallskip

\item[(e)] the critical points of $\mathcal A$ on $\mathcal M_1^{k,\alpha}(M,\mathbf g)$ are those
metrics conformal to $\mathbf g$, having total volume $1$, and that have constant scalar curvature.
\smallskip

\item[(f)] if $\mathbf g_0\in\mathcal M_1^{k,\alpha}(M,\mathbf g)$ is a critical point of $\mathcal A$ on
$\mathcal M_1^{k,\alpha}(M,\mathbf g)$, then the second variation $\mathrm d^2\mathcal A(\mathbf g_0)$ of $\mathcal A$
at $\mathbf g_0$ is identified with the quadratic form on $\mathcal C^{k,\alpha}_*(M,\mathbf g_0)$ defined by:
\begin{equation}\label{eq:secvarformHE}
\mathrm d^2\mathcal A(\mathbf g_0)(f,f)=\frac{m-2}2\int_M\big((m-1)\Delta_{\mathbf g_0}f-\kappa_{g_0}f\big)f\,\nu_{\mathbf g_0}.
\end{equation}
Moreover, $\mathbf g_0$ is a \emph{nondegenerate}\footnote{%
in the sense of Morse theory.} critical point of $\mathcal A$ on
$\mathcal M_1^{k,\alpha}(M,\mathbf g)$ if either $\kappa_{\mathbf g_0}=0$ or if $\frac{\kappa_{g_0}}{m-1}$ is not an eigenvalue
of $\Delta_{\mathbf g_0}$.
\end{itemize}
\end{prop}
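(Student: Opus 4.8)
The plan is to handle (a)--(c) with soft submersion and regular-value arguments plus the smoothness of the relevant nonlinear differential operators, and to derive (d)--(f) from the classical first and second variation formulas for the total scalar curvature, restricted to conformal directions.

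For (a) and (b), formula~\eqref{eq:difvolume} gives $\mathrm d\mathcal V(\mathbf g)\mathbf g=\tfrac m2\,\mathcal V(\mathbf g)\neq0$, so $\mathcal V\colon\mathcal M^k(M)\to\left]0,+\infty\right[$ is a submersion whose kernel at each point has the complementary line $\mathds R\,\mathbf g$; hence $1$ is a regular value and $\mathcal M_1^k(M)=\mathcal V^{-1}(1)$ is an embedded codimension-$1$ submanifold with $T_\mathbf g\mathcal M_1^k(M)=\ker\mathrm d\mathcal V(\mathbf g)$. For (b) I would restrict $\mathcal V$ to $\big[\mathbf g\big]_{k,\alpha}$ and identify $T_{\mathbf g_0}\big[\mathbf g\big]_{k,\alpha}$ with $\mathcal C^{k,\alpha}(M)$ via $f\mapsto f\,\mathbf g_0$; then \eqref{eq:difvolume} shows that the differential of this restriction at $\mathbf g_0$ sends $f\,\mathbf g_0$ to $\tfrac m2\int_Mf\,\nu_{\mathbf g_0}$, still surjective onto $\mathds R$ (take $f$ constant) with complemented kernel, so $\mathcal M_1^{k,\alpha}(M,\mathbf g)$ is embedded of codimension $1$ with tangent space at $\mathbf g_0$ the asserted $\mathcal C^{k,\alpha}_*(M,\mathbf g_0)$. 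For (c), $\mathcal V$ is smooth because $\mathbf g\mapsto\nu_\mathbf g$ is smooth and integration is bounded linear, and $\mathcal A$ is smooth on $\mathcal M^k(M)$ because $\mathbf g\mapsto\kappa_\mathbf g$, a second order operator depending rational-polynomially on the $2$-jet of $\mathbf g$, is a smooth map $\mathcal M^k(M)\to\mathcal C^{k-2}(M)$; on $\big[\mathbf g\big]_{k,\alpha}$ I would use the conformal change identities $\int_M\kappa_{\widetilde{\mathbf g}}\,\nu_{\widetilde{\mathbf g}}=\int_M\big(\tfrac{4(m-1)}{m-2}|\nabla_\mathbf g u|^2+\kappa_\mathbf g u^2\big)\,\nu_\mathbf g$ and $\mathcal V(\widetilde{\mathbf g})=\int_M u^{2m/(m-2)}\,\nu_\mathbf g$ for $\widetilde{\mathbf g}=u^{4/(m-2)}\mathbf g$, both manifestly smooth in $u\in\mathcal C^{k,\alpha}(M)$, $u>0$; since $\psi\mapsto\psi^{(m-2)/4}$ is a diffeomorphism of the positive cone of $\mathcal C^{k,\alpha}(M)$, this yields smoothness on $\big[\mathbf g\big]_{k,\alpha}$, and on $\mathcal M_1^{k,\alpha}(M,\mathbf g)$ the functional reduces to the quadratic (Yamabe) functional $u\mapsto\int_M\big(\tfrac{4(m-1)}{m-2}|\nabla_\mathbf g u|^2+\kappa_\mathbf g u^2\big)\,\nu_\mathbf g$.

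For (d) and (e) I would start from the classical variation $\mathrm d\big(\int_M\kappa_\mathbf g\,\nu_\mathbf g\big)\mathbf h=\int_M\big\langle\tfrac12\kappa_\mathbf g\,\mathbf g-\mathrm{Ric}_\mathbf g,\mathbf h\big\rangle_\mathbf g\,\nu_\mathbf g$ (see \cite{Besse}); differentiating the product defining $\mathcal A$ and evaluating at $\mathbf g$ with $\mathcal V(\mathbf g)=1$ gives $\mathrm d\mathcal A(\mathbf g)\mathbf h=\int_M\big\langle\tfrac12\kappa_\mathbf g\,\mathbf g-\mathrm{Ric}_\mathbf g,\mathbf h\big\rangle\,\nu_\mathbf g-\tfrac{m-2}{2m}\big(\int_M\mathrm{tr}_\mathbf g\mathbf h\,\nu_\mathbf g\big)\int_M\kappa_\mathbf g\,\nu_\mathbf g$. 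On $T_\mathbf g\mathcal M_1^k(M)=\{\mathbf h:\langle\mathbf g,\mathbf h\rangle_{L^2}=0\}$ this is the $L^2$ pairing with $\tfrac12\kappa_\mathbf g\,\mathbf g-\mathrm{Ric}_\mathbf g$, which vanishes identically iff $\tfrac12\kappa_\mathbf g\,\mathbf g-\mathrm{Ric}_\mathbf g\in\mathds R\,\mathbf g$, i.e. $\mathrm{Ric}_\mathbf g$ is a pointwise multiple of $\mathbf g$; by Schur's Lemma ($m\ge3$) that multiple is then constant, so $\mathbf g$ is Einstein, which is (d). For (e), restricting instead to $\mathbf h=f\,\mathbf g_0$ with $\int_Mf\,\nu_{\mathbf g_0}=0$ and using $\mathrm{tr}_{\mathbf g_0}(f\,\mathbf g_0)=mf$ together with $\langle\tfrac12\kappa_{\mathbf g_0}\mathbf g_0-\mathrm{Ric}_{\mathbf g_0},f\,\mathbf g_0\rangle=\tfrac{m-2}{2}\kappa_{\mathbf g_0}f$, one gets $\mathrm d\mathcal A(\mathbf g_0)(f\,\mathbf g_0)=\tfrac{m-2}{2}\int_Mf\big(\kappa_{\mathbf g_0}-\overline{\kappa}_{\mathbf g_0}\big)\,\nu_{\mathbf g_0}$ with $\overline{\kappa}_{\mathbf g_0}=\int_M\kappa_{\mathbf g_0}\,\nu_{\mathbf g_0}$, which vanishes for every mean-zero $f$ exactly when $\kappa_{\mathbf g_0}$ is constant.

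For (f), at a critical point $\mathbf g_0$ I would parametrize the conformal class by $w\mapsto w^{4/(m-2)}\mathbf g_0$, so that $\mathbf g_0$ corresponds to $w\equiv1$ and, on $\mathcal M_1^{k,\alpha}(M,\mathbf g)$, $\mathcal A$ becomes the quadratic functional $Q(w)=\int_M\big(\tfrac{4(m-1)}{m-2}|\nabla_{\mathbf g_0}w|^2+\kappa_{\mathbf g_0}w^2\big)\,\nu_{\mathbf g_0}$ restricted to $C(w)=\int_Mw^{2m/(m-2)}\nu_{\mathbf g_0}=1$. Since $w\equiv1$ is a constrained critical point with Lagrange multiplier $\lambda=\tfrac{m-2}{m}\kappa_{\mathbf g_0}$ (read off from $\mathrm dQ(1)=\lambda\,\mathrm dC(1)$), the intrinsic Hessian at $\mathbf g_0$, evaluated on a tangent vector $v$ with $\int_Mv\,\nu_{\mathbf g_0}=0$, equals $\mathrm d^2Q(1)(v,v)-\lambda\,\mathrm d^2C(1)(v,v)$; inserting $\mathrm d^2Q(1)(v,v)=2\int_M\big(\tfrac{4(m-1)}{m-2}(\Delta_{\mathbf g_0}v)v+\kappa_{\mathbf g_0}v^2\big)\nu_{\mathbf g_0}$ and $\mathrm d^2C(1)(v,v)=\tfrac{2m(m+2)}{(m-2)^2}\int_Mv^2\,\nu_{\mathbf g_0}$ collapses this to $\tfrac{8}{m-2}\int_M\big((m-1)\Delta_{\mathbf g_0}v-\kappa_{\mathbf g_0}v\big)v\,\nu_{\mathbf g_0}$, and since the tangent vector $v\,\mathbf g_0$ corresponds to $f=\tfrac{4}{m-2}v$ under the identification of (b), substituting $v=\tfrac{m-2}{4}f$ reproduces exactly~\eqref{eq:secvarformHE}. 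For nondegeneracy, the Jacobi operator is $f\mapsto\tfrac{m-2}{2}\big((m-1)\Delta_{\mathbf g_0}f-\kappa_{\mathbf g_0}f\big)$ on $\mathcal C^{k,\alpha}_*(M,\mathbf g_0)$, and its kernel consists of the mean-zero $f$ with $\Delta_{\mathbf g_0}f=\tfrac{\kappa_{\mathbf g_0}}{m-1}f$: if $\kappa_{\mathbf g_0}=0$ this forces $f$ constant, hence $f\equiv0$; if $\tfrac{\kappa_{\mathbf g_0}}{m-1}$ is not an eigenvalue of $\Delta_{\mathbf g_0}$ (automatic when $\kappa_{\mathbf g_0}<0$, since $\Delta_{\mathbf g_0}$ is nonnegative) there is likewise no $f\neq0$, an eigenfunction for a nonzero eigenvalue having vanishing mean automatically. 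I expect (f) to be the one genuinely delicate point: one must keep the Lagrange-multiplier correction $-\lambda\,\mathrm d^2C(1)$, which encodes the curvature of the constraint hypersurface $\mathcal M_1^{k,\alpha}(M,\mathbf g)$, and correctly pass from the conformal-factor variation $w$ to the metric-direction function $f$; dropping either bookkeeping point spoils the constant $\tfrac{m-2}{2}$ and even the sign of the $\kappa_{\mathbf g_0}$-term in~\eqref{eq:secvarformHE}.
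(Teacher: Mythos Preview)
Your argument is correct throughout. For items (a)--(e) you follow essentially the same route as the paper: the regular-value argument based on $\mathrm d\mathcal V(\mathbf g)\mathbf g=\tfrac m2\mathcal V(\mathbf g)$, the first variation formula for the total scalar curvature, and the restriction to conformal directions $\mathbf h=f\,\mathbf g_0$; your extra bookkeeping (the $\overline\kappa_{\mathbf g_0}$ term in (e), the explicit conformal-change identities in (c)) is harmless and in fact slightly more precise than the paper's terse treatment.

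The genuine difference is in (f). The paper does not derive \eqref{eq:secvarformHE} at all: it simply cites \cite{Koiso78, Schoen87} for the formula and then reads off the nondegeneracy criterion from the spectral properties of $(m-1)\Delta_{\mathbf g_0}-\kappa_{\mathbf g_0}$. You instead give a self-contained computation via the parametrization $w\mapsto w^{4/(m-2)}\mathbf g_0$ and the constrained Hessian $\mathrm d^2Q(1)-\lambda\,\mathrm d^2C(1)$, carefully tracking the change of variable $f=\tfrac{4}{m-2}v$. Your computation is correct (the constants collapse exactly as you claim to $\tfrac{8}{m-2}$ and then to $\tfrac{m-2}{2}$), and has the advantage of making transparent where the $-\kappa_{\mathbf g_0}$ term comes from, namely the curvature of the volume constraint encoded in $\mathrm d^2C(1)$; the paper's approach, by contrast, is shorter but treats the formula as a black box.
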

\begin{proof}
For $\mathbf g\in\mathcal M_1^k(M)$, setting $\mathbf h=\mathbf g$ in \eqref{eq:difvolume} we get
$\mathrm d\mathcal V(\mathbf g)\mathbf g=\frac12\int_M\mathrm{tr}_\mathbf g(\mathbf g)\,\nu_\mathbf g=
\frac m2\mathcal V(\mathbf g)>0$. Thus, $\mathcal M_1^k(M)$ and $\mathcal M_1^{k,\alpha}(M,\mathbf g)$ are the
inverse image of a regular value of the volume function, which proves (a) and (b).
For $\mathbf g\in\mathcal M_1^k(M)$, the tangent space $T_\mathbf g\mathcal M_1^k(M)$ is the kernel of
$\mathrm d\mathcal V(\mathbf g)$, i.e., the space of those $\mathbf h\in\mathcal S^k(M)$ such that
$\int_M\mathrm{tr}_\mathbf g(\mathbf h)\,\nu_\mathbf g=0$, see \eqref{eq:difvolume}. Setting $\mathbf h=f\cdot\mathbf g$, with
$f\in\mathcal C^{k,\alpha}(M)$, we get $\int_M\mathrm{tr}_\mathbf g(\mathbf h)\,\nu_\mathbf g=m\int_Mf\,\nu_\mathbf g$;
so, the tangent space\footnote{If $\mathbf g_0\in\big[\mathbf g\big]$, then clearly $[\mathbf g_0\big]=\big[\mathbf g\big]$
and $\mathcal M_1^{k,\alpha}(M,\mathbf g_0)=\mathcal M_1^{k,\alpha}(M,\mathbf g)$.
Thus, in this proof it will suffice to consider the case $\mathbf g_0=\mathbf g$} $T_\mathbf g\mathcal M_1^{k,\alpha}(M,\mathbf g)$ is identified with $\mathcal C^{k,\alpha}_*(M,\mathbf g)$.

The smoothness of $\mathcal A$ is clear, since it is the composition of an integral and a second order differential
operator having smooth coefficients. The first variation formula for $\mathcal A$ is given by\footnote{%
The symbol $\langle\cdot,\cdot\rangle_\mathbf g$ in \eqref{eq:firstvarA} denotes the inner product in the space of symmetric
$(0,2)$ tensors induced by $\mathbf g$.}
(see for instance \cite{Schoen87}):
\begin{equation}\label{eq:firstvarA}
\mathrm d\mathcal A(\mathbf g)\mathbf h=-\int_M\left\langle\mathrm{Ric}_\mathbf g-\tfrac12\kappa_\mathbf g\,\mathbf g,\mathbf h\right\rangle_\mathbf g\,\nu_\mathbf g,
\end{equation}
$\mathbf h\in T_\mathbf g\mathcal M_1^k(M)$, from which it follows that $\mathbf g\in\mathcal M_1^k(M)$ is a critical point of $\mathcal A$ if and only if $\mathrm{Ric}_\mathbf g-\tfrac12\kappa_\mathbf g\,\mathbf g=\lambda\cdot\mathbf g$ for some map $\lambda$,
i.e., if and only if exists a function $\mu$ such that $\mathrm{Ric}_\mathbf g=\mu\cdot\mathbf g$.
Taking traces, one sees that $\mu=\frac1m\kappa_\mathbf g$,
i.e., $\mathbf g$ is Einstein. This proves (d). Setting $\mathbf h=f\cdot\mathbf g$ in \eqref{eq:firstvarA}, one
obtains:
\[\mathrm d\mathcal A(\mathbf g)\big(f\cdot\mathbf g)=\tfrac{m-2}2\int_Mf\,\kappa_\mathbf g\,\nu_\mathbf g.\]
This is zero for all $f$  with $\int_Mf\,\nu_\mathbf g=0$ iff and only if $\kappa_\mathbf g$ is constant, proving (e).
Formula \eqref{eq:secvarformHE} can be found, for instance, in \cite{Koiso78, Schoen87}.
It is easy to see that the linear operator $(m-1)\Delta_{\mathbf g}-\kappa_{g}$ is (unbounded) self-adjoint on $L^2(M,\nu_{\mathbf g})$,
that it leaves invariant the set of functions $f$ such that  $\int_Mf\,\nu_{\mathbf g}=0$, and that
its restriction as a linear operator on $\mathcal C^{k,\alpha}_*(M,\mathbf g)$ is Fredholm, and it
has non trivial kernel if and only if $\frac{\kappa_{\mathbf g}}{m-1}$ is a non zero eigenvalue of $\Delta_{\mathbf g}$.
\end{proof}
\begin{rem}\label{thm:remDeltakappa}
An important observation for our theory is that, given $\lambda\in\mathds R^+$,
one has $\Delta_{\mathbf \lambda\mathbf g}=\frac1\lambda\Delta_\mathbf g$ and $\kappa_{\mathbf \lambda\mathbf g}=\frac1\lambda\kappa_\mathbf g$.
This means that the spectrum of the operator $\Delta_\mathbf g-\frac{\kappa_\mathbf g}{m-1}$ is invariant by affine changes of the metric
$\mathbf g$. On the other hand, $\nu_{\mathbf \lambda\mathbf g}=\lambda^\frac m2\nu_\mathbf g$. When needed, we will normalize metrics
to have volume $1$, without changing the spectral theory of the operator $\Delta_\mathbf g-\frac{\kappa_\mathbf g}{m-1}$.
\end{rem}

\end{section}

\begin{section}{Bifurcation and local rigidity for the Yamabe problem}
\label{sec:locrigbifYamabeabstr}
Let $M$ be a fixed compact manifold without boundary, with $\mathrm{dim}(M)=m\ge3$, and assume that $[a,b]\ni\lambda\mapsto\mathbf g_\lambda\in\mathcal S^k(M)$, $k\ge2$, is a continuous path of Riemannian
metrics on $M$ having constant scalar curvature. An element $\lambda_*\in[a,b]$ is a \emph{bifurcation instant} for the family $(\mathbf g_\lambda)_{\lambda\in[a,b]}$
if there exists a sequence $(\lambda_n)_{n\ge1}$ in $[a,b]$ and a sequence $(\mathbf g_n)_{n\ge1}$ in $\mathcal S^k(M)$ of Riemannian metrics on $M$ satisfying:
\begin{itemize}
\item[(a)] for all $n\ge1$, $\mathbf g_n$ belongs to the conformal class of $\mathbf g_{\lambda_n}$, but
$\mathbf g_n\ne\mathbf g_{\lambda_n}$;
\smallskip

\item[(b)] for all $n\ge1$, $\int_M\nu_{\mathbf g_n}=\int_M\nu_{\mathbf g_{\lambda_n}}$;
\smallskip

\item[(c)] for all $n\ge1$, $\mathbf g_n$ has constant scalar curvature;
\smallskip

\item[(d)] $\lim\limits_{n\to\infty}\lambda_n=\lambda_*$ and $\lim\limits_{n\to\infty}\mathbf g_n=\mathbf g_{\lambda_*}$ in $\mathcal S^k(M)$.
\end{itemize}
If $\lambda_*\in[a,b]$ is not a bifurcation instant, then we say that the family $(\mathbf g_\lambda)_\lambda$ is \emph{locally rigid} at $\lambda_*$.
The implicit function theorem provides a sufficient condition for the local rigidity.
\subsection{A sufficient condition for local rigidity}
\begin{prop}\label{thm:localrigidity}
Let $[a,b]\ni\lambda\mapsto\mathbf g_\lambda$ be a smooth path of Riemannian metrics of class $\mathcal C^{k}$, $k\ge3$, having
constant scalar curvature $\kappa_\lambda$ for all $\lambda$, and let $\Delta_\lambda$ denote the Laplace--Beltrami
operator of $\mathbf g_\lambda$. If $\kappa_{\lambda_*}=0$ or if $\frac{\kappa_{\lambda_*}}{m-1}$ is not an eigenvalue
of  $\Delta_{\lambda_*}$ (i.e., if $\mathbf g_{\lambda_*}$ is a nondegenerate critical point of $\mathcal A$ in its conformal class),
then the family $(\mathbf g_\lambda)_\lambda$ is locally rigid at $\lambda_*$.
\end{prop}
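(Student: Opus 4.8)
The plan is to recast the local rigidity statement as a consequence of the implicit function theorem applied to the map that sends a conformal factor to the (traceless part of the) scalar curvature functional. Concretely, fix $\lambda_*$ and work in the conformal class $[\mathbf g_{\lambda_*}]_{k,\alpha}$; by Remark~\ref{thm:remDeltakappa} we may normalize all metrics to have volume $1$, so that the relevant space of critical points is the manifold $\mathcal M_1^{k,\alpha}(M,\mathbf g_{\lambda_*})$ of Proposition~\ref{thm:summainfactscritpt}(b). I would set up a nonlinear operator $\mathcal F$, defined on a neighborhood of $(\lambda_*,\mathbf g_{\lambda_*})$ in $[a,b]\times\mathcal M_1^{k,\alpha}$, whose zero set parametrizes exactly the pairs $(\lambda,\mathbf g)$ with $\mathbf g\in[\mathbf g_\lambda]$, $\mathcal V(\mathbf g)=\mathcal V(\mathbf g_\lambda)$, and $\kappa_\mathbf g$ constant. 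The natural choice is (the conformal-class version of) the gradient of $\mathcal A$ restricted to the volume-$1$ slice, i.e. $\mathcal F(\lambda,\mathbf g)=\kappa_\mathbf g-\overline{\kappa_\mathbf g}$, the scalar curvature minus its average, viewed as a map into $\mathcal C^{k-2,\alpha}_*(M,\mathbf g)$; the bifurcation data in conditions (a)--(d) is precisely a sequence of zeros of $\mathcal F$ accumulating at $(\lambda_*,\mathbf g_{\lambda_*})$ with $\mathbf g_n\ne\mathbf g_{\lambda_n}$.

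The key computational step is to identify the partial derivative $D_\mathbf g\mathcal F(\lambda_*,\mathbf g_{\lambda_*})$ along the fiber and show it is an isomorphism under the stated hypothesis. Differentiating $\mathcal F$ in the $\mathbf g$-direction at a critical point, the linearization is (up to a nonzero constant and the projection onto $\mathcal C^{k,\alpha}_*$) the operator appearing in the second variation formula \eqref{eq:secvarformHE}, namely $f\mapsto(m-1)\Delta_{\mathbf g_{\lambda_*}}f-\kappa_{\lambda_*}f$, acting on $\mathcal C^{k,\alpha}_*(M,\mathbf g_{\lambda_*})$. By the discussion at the end of the proof of Proposition~\ref{thm:summainfactscritpt}, this operator is self-adjoint, Fredholm of index $0$ on the relevant Hölder spaces, and it is invertible precisely when its kernel is trivial, which happens exactly when $\kappa_{\lambda_*}=0$ or when $\tfrac{\kappa_{\lambda_*}}{m-1}$ is not an eigenvalue of $\Delta_{\mathbf g_{\lambda_*}}$ — i.e. exactly the nondegeneracy hypothesis. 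Hence $D_\mathbf g\mathcal F$ is a Banach space isomorphism at $(\lambda_*,\mathbf g_{\lambda_*})$.

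With the isomorphism in hand, the implicit function theorem (in the fiber-bundle form recorded in Appendix~\ref{sec:appfiber}, applied to the bundle with fibers $\mathcal M_1^{k,\alpha}(M,\mathbf g_\lambda)$ over $[a,b]$) yields a neighborhood $U$ of $(\lambda_*,\mathbf g_{\lambda_*})$ and a $\mathcal C^1$ map $\lambda\mapsto\widehat{\mathbf g}_\lambda$ such that the only zeros of $\mathcal F$ in $U$ are the pairs $(\lambda,\widehat{\mathbf g}_\lambda)$. Since $(\lambda,\mathbf g_\lambda)$ is itself a zero (the path consists of constant scalar curvature metrics) and the solution branch is unique, $\widehat{\mathbf g}_\lambda=\mathbf g_\lambda$ near $\lambda_*$; therefore any constant scalar curvature metric in $[\mathbf g_\lambda]$ of the right volume that is $\mathcal C^{k}$-close to $\mathbf g_{\lambda_*}$ must coincide with $\mathbf g_\lambda$, contradicting the existence of a bifurcating sequence as in (a)--(d). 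This rules out $\lambda_*$ being a bifurcation instant, proving local rigidity.

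The main obstacle is the functional-analytic bookkeeping rather than any deep geometry: one must be careful that the conformal factor lives in $\mathcal C^{k,\alpha}$ while the scalar curvature drops two derivatives to $\mathcal C^{k-2,\alpha}$, so $\mathcal F$ maps between genuinely different Hölder spaces, and the Fredholm/elliptic regularity statement for $(m-1)\Delta_{\mathbf g_{\lambda_*}}-\kappa_{\lambda_*}$ between $\mathcal C^{k,\alpha}_*$ and $\mathcal C^{k-2,\alpha}_*$ has to be invoked correctly (this is exactly why the Hölder classes, rather than $\mathcal C^k$, were introduced in Section~\ref{sec:varsetting}). One also has to check that smoothness of the path $\lambda\mapsto\mathbf g_\lambda$ in $\mathcal C^k$ is enough to get the needed regularity of $\mathcal F$ in the $\lambda$-variable, and that the definition of bifurcation instant — which only asks for $\mathcal C^k$-convergence of the $\mathbf g_n$ — is compatible with the stronger Hölder framework via the standard elliptic bootstrap (a constant scalar curvature metric conformal to a smooth metric is automatically as regular as the conformal factor allows). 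None of this is hard, but it is where the argument must be written with care.
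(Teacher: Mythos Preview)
Your proposal is correct and follows essentially the same approach as the paper: set up a fiber-bundle map sending a conformal factor (equivalently, a metric in the conformal class of $\mathbf g_\lambda$ with the right volume) to its scalar curvature minus its average, compute the vertical derivative at $(\lambda_*,\mathbf g_{\lambda_*})$ as (a multiple of) $(m-1)\Delta_{\lambda_*}-\kappa_{\lambda_*}$ acting between mean-zero H\"older spaces, and invoke Fredholm index~$0$ plus injectivity under the nondegeneracy hypothesis to apply the fiberwise implicit function theorem of Appendix~\ref{sec:appfiber}. The only cosmetic difference is that the paper parametrizes the fiber by the conformal factor $\psi\in\mathcal C^{2,\alpha}_+(M)$ rather than by the metric itself, and it fixes the H\"older exponents to $\mathcal C^{2,\alpha}\to\mathcal C^{0,\alpha}$ from the outset rather than discussing the $\mathcal C^{k,\alpha}\to\mathcal C^{k-2,\alpha}$ bookkeeping you flag at the end.
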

\begin{proof}
Up to a suitable normalization, we can assume $\int_M\nu_{\mathbf g_\lambda}=1$ for all $\lambda\in[a,b]$, see Remark~\ref{thm:remDeltakappa}.
Denote by $\mathcal C^{2,\alpha}_+(M)$ the open set of positive functions in $\mathcal C^{2,\alpha}(M)$, and
by $\mathcal D$ the sub-bundle of the trivial fiber bundle $\mathcal C^{2,\alpha}_+(M)\times[a,b]$ over the interval $[a,b]$, defined by:
\begin{equation}\label{eq:defD}
\mathcal D=\left\{(\psi,\lambda)\in\mathcal C^{2,\alpha}_+(M)\times[a,b]:\int_M\psi^\frac m2\,\nu_{\mathbf g_\lambda}=1\right\}.
\end{equation}
Also, let $\mathcal E$ the sub-bundle of $\mathcal C^{0,\alpha}(M)\times[a,b]$ defined by:
\begin{equation}\label{eq:defE}\mathcal E=\left\{(\varphi,\lambda)\in\mathcal C^{0,\alpha}(M)\times[a,b]:\int_M\varphi\,\nu_{\mathbf g_\lambda=0}\right\}.\end{equation}
Finally, consider the smooth map $F:\mathcal D\to\mathcal E$ given by:
\begin{equation}\label{eq:defF}
F(\psi,\lambda)=\left(\kappa_{\psi\cdot\mathbf g_\lambda}-\int_M\kappa_{\psi\cdot\mathbf g_\lambda}\,\nu_{\mathbf g_\lambda},\lambda\right)\in\mathcal E;
\end{equation}
clearly, given $\psi\in\mathcal C^{2,\alpha}_+(M)$ and $\lambda\in[a,b]$, the metric $\psi\cdot\mathbf g_\lambda$ has volume equal to $1$
and constant scalar curvature if and only if $(\psi,\lambda)\in\mathcal D$ and $F(\psi,\lambda)=(0,\lambda)$. This means that, in order to establish the desired result, we need
to look at the structure of the inverse image $F^{-1}(\mathbf 0_\mathcal E)$ of the null section $\mathbf 0_\mathcal E$ of the bundle $\mathcal E$.
Note that $F$ is a fiber bundle morphism, i.e., denoting by $\pi_\mathcal D:\mathcal D\to[a,b]$ and $\pi_\mathcal E:\mathcal E\to[a,b]$ the
natural projections, one has $\pi_\mathcal E\circ F=\pi_\mathcal D$.  The thesis will follows from the Implicit Function Theorem once
we show that the \emph{vertical derivative}\footnote{%
See Appendix~\ref{sec:appfiber}, Proposition~\ref{thm:fiberIFT}.}
$\mathrm d_{\mathrm{ver}}F(\mathbf1,\lambda_*)$ of $F$ at the point $(\mathbf 1,\lambda_*)$
(here $\mathbf1$ is the constant function equal to $1$ on $M$) is a (linear) isomorphism from the Banach space:
\[D_*=\big\{\Psi\in\mathcal C^{2,\alpha}(M):{\textstyle\int}_M\Psi\,\nu_{\mathbf g_{\lambda_*}}=0\big\}\]
to the Banach space
\[E_* =\big\{\Phi\in\mathcal C^{0,\alpha}(M):{\textstyle\int}_M\Phi\,\nu_{\mathbf g_{\lambda_*}}=0\big\}.\]
Observe that $D_*$ is the tangent space at $\psi=\mathbf 1$ of the fiber:
\[\mathcal D_{\lambda_*}=\big\{\psi\in\mathcal C^{2,\alpha}_+(M):{\textstyle\int}_M\psi^\frac m2\,\nu_{\mathbf g_{\lambda_*}}=1\big\}.\]
The vertical derivative $\mathrm d_\mathrm vF(\mathbf1,\lambda_*)$ is easily computed as:
\begin{align}\label{eq:fiberderF}
\nonumber
\tfrac2{m-2}\,\mathrm d_{\mathrm{ver}}F(\mathbf 1,\lambda_*)\Psi&=\!(m-1)\Delta_{\lambda_*}\Psi\!-\!\kappa_{\lambda_*}\Psi\!-\!\int_{\!M}\!\!\Big[(m-1)\Delta_{\lambda_*}\Psi-\kappa_{\lambda_*}\Psi\Big]\,\nu_{\mathbf g_{\lambda_*}}
\\
&=(m-1)\Delta_{\lambda_*}\Psi-\kappa_{\lambda_*}\Psi.
\end{align}
For the second equality above, note that $\Delta_{\lambda_*}$ (as well as the operator given by multiplication by a constant)
carries $D_*$ to $E_*$. Under the assumption that $\kappa_{\lambda_*}=0$ or that $\frac{\kappa_{\lambda_*}}{m-1}$ is not an eigenvalue of
$\Delta_{\lambda_*}$, $\mathrm d_\mathrm fF(\mathbf 1,\lambda_*)$ is injective on $D_*$. Moreover, the linear operator $\Delta_{\lambda_*}-\kappa_{\lambda_*}$
from $\mathcal C^{2,\alpha}(M)$ to $\mathcal C^{0,\alpha}(M)$ is Fredholm of index $0$. Since the codimensions of $D_*$ in $\mathcal C^{2,\alpha}(M)$
and of $E_*$ in $\mathcal C^{0,\alpha}(M)$ are equal (both equal to $1$), it follows that $\mathrm d_\mathrm fF(\mathbf 1,\lambda_*)$ is
an isomorphism from $D_*$ to $E_*$. This concludes the proof.
\end{proof}
\begin{cor}\label{thm:corrigEinst}
If $\mathbf g_{\lambda_*}$ is an Einstein metric which is not the round metric on a sphere, then the family $(\mathbf g_\lambda)_\lambda$ is locally rigid at $\lambda_*$.
\end{cor}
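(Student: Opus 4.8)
The plan is to reduce everything to Proposition~\ref{thm:localrigidity}, whose hypothesis is that either $\kappa_{\lambda_*}=0$ or $\frac{\kappa_{\lambda_*}}{m-1}$ is not an eigenvalue of $\Delta_{\lambda_*}$. Thus the entire task is to check this spectral alternative for an Einstein metric that is not a round metric on a sphere, which I would do by a case analysis on the sign of the (constant) scalar curvature $\kappa_{\lambda_*}$. If $\kappa_{\lambda_*}=0$ there is nothing to prove. If $\kappa_{\lambda_*}<0$, then $\frac{\kappa_{\lambda_*}}{m-1}<0$; since on a compact manifold $\Delta_{\lambda_*}$ has nonnegative spectrum, a negative number cannot be one of its eigenvalues, and Proposition~\ref{thm:localrigidity} applies.

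The only case requiring an actual geometric input is $\kappa_{\lambda_*}>0$. Here I would invoke the Lichnerowicz--Obata theorem. Since $\mathbf g_{\lambda_*}$ is Einstein, $\mathrm{Ric}_{\lambda_*}=\tfrac{\kappa_{\lambda_*}}{m}\,\mathbf g_{\lambda_*}$, so Lichnerowicz's estimate gives that the first nonzero eigenvalue $\mu_1$ of $\Delta_{\lambda_*}$ satisfies $\mu_1\ge\frac{\kappa_{\lambda_*}}{m-1}$, with equality precisely when $(M,\mathbf g_{\lambda_*})$ is isometric to a round sphere (see Obata \cite{Oba72}). As $\mathbf g_{\lambda_*}$ is not such a metric by hypothesis, the inequality is strict: $\mu_1>\frac{\kappa_{\lambda_*}}{m-1}>0$. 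Hence $\frac{\kappa_{\lambda_*}}{m-1}$ lies strictly between the lowest eigenvalue $0$ of $\Delta_{\lambda_*}$ and the first nonzero eigenvalue $\mu_1$, so it is not an eigenvalue of $\Delta_{\lambda_*}$, and Proposition~\ref{thm:localrigidity} yields local rigidity at $\lambda_*$.

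The ``hard part'' is not genuinely difficult: it is exactly the rigidity clause of the Lichnerowicz--Obata theorem, namely that equality in the first-eigenvalue estimate for an Einstein manifold forces the round sphere; in fact this is precisely the reason the nondegeneracy condition in Proposition~\ref{thm:summainfactscritpt}(f) and Proposition~\ref{thm:localrigidity} is phrased in terms of $\frac{\kappa}{m-1}$. Everything else is the routine sign analysis above together with the nonnegativity of the Laplace spectrum. One should only make sure that the standing regularity assumptions of Proposition~\ref{thm:localrigidity} (a $\mathcal C^k$ path with $k\ge3$) remain in force, which they do, since $(\mathbf g_\lambda)_\lambda$ is the same path considered there.
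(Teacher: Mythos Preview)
Your proof is correct and follows essentially the same route as the paper: both verify the nondegeneracy hypothesis of Proposition~\ref{thm:localrigidity} via the Lichnerowicz--Obata first-eigenvalue estimate (the paper cites Koiso \cite{Koiso78} for this, without spelling out the easy case split on the sign of $\kappa_{\lambda_*}$). One small correction: the reference for the equality case of Lichnerowicz's estimate should be Obata's 1962 paper \cite{Oba62}, not \cite{Oba72}, which concerns conformal transformations.
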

\begin{proof}
By \cite[Theorem~2.4]{Koiso78}, the positive eigenvalues of $\Delta_{\lambda_*}$ are strictly larger than $\kappa_{\lambda_*}$ (i.e., $\mathbf g_{\lambda_*}$
is a strict local minimum of the Hilbert--Einstein functional in its conformal class). The conclusion follows from Proposition~\ref{thm:localrigidity}.
\end{proof}
By a result of B\"ohm, Wang and Ziller, see \cite[Theorem~C, p.\ 687]{BohWanZil}, any metric with unit volume and constant scalar curvature
which is $\mathcal C^{2,\alpha}$-close to an Einstein metric and which is not conformally equivalent to a round metric on the sphere must be a \emph{Yamabe metric},
i.e., it realizes the minimum of the scalar curvature in its conformal class.
Thus, in the situation of Corollary~\ref{thm:corrigEinst}, $\mathbf g_\lambda$ is Yamabe for $\lambda$ near $\lambda_*$. More generally,
$\mathbf g_\lambda$ is a strict local minimum of the Hilbert--Einstein functional in its conformal class for $\lambda$ in every interval $I\subset[a,b]$ containing
$\lambda_*$ such that either $\kappa_\lambda=0$ or $\frac{\kappa_\lambda}{m-1}$ is not an eigenvalue of $\Delta_\lambda$ for all $\lambda\in I$.
For instance, consider the manifold $\mathds S^n$, $n\ge2$, endowed with the standard round metric $\mathbf g$ (say, with normalized volume equal to $1$);
then, the (normalized) product metric $\mathbf g_\lambda=\mathbf g\oplus\lambda\,\mathbf g$ on $\mathds S^n\times\mathds S^n$ is a strict local minimum of the Hilbert--Einstein functional
in its conformal class when $\lambda\in\left]\frac{n-1}n,\frac n{n-1}\right[$, see Subsection~\ref{sub:productofspheres}.

\subsection{Bifurcation of solutions for the Yamabe problem}
An instant $\lambda\in\left]0,+\infty\right[$ for which $\kappa_\lambda\ne0$ and $\frac{\kappa_\lambda}{m-1}$ be an eigenvalue of
$\Delta_\lambda$ will be called a \emph{degeneracy instant} for the family $(\mathbf g_\lambda)_\lambda$. We will now establish some
bifurcation results at the degeneracy instants of $(\mathbf g_\lambda)_\lambda$.

\begin{teo}\label{thm:mainbif}
Let $M$ be a compact manifold, with $\mathrm{dim}(M)=m\ge3$, and let $[a,b]\ni\lambda\mapsto\mathbf g_\lambda\in\mathcal S^k(M)$, $k\ge3$, is a $\mathcal C^1$--path of Riemannian
metrics on $M$ having constant scalar curvature. For all $\lambda\in[a,b]$, denote by $\kappa_\lambda$ the scalar curvature of $\mathbf g_\lambda$, and
by $n_\lambda$ the number of eigenvalues of the Laplace-Beltrami operator $\Delta_\lambda$ (counted with multiplicity) that are
less than $\frac{\kappa_\lambda}{m-1}$. Assume the following:
\begin{itemize}
\item[(a)] $\frac{\kappa_a}{m-1}$ is either equal to $0$, or it is not an eigenvalue of $\Delta_{\mathbf g_a}$;
\smallskip

\item[(b)] $\frac{\kappa_b}{m-1}$ is either equal to $0$, or it is not an eigenvalue of $\Delta_{\mathbf g_b}$;
\smallskip

\item[(c)] $n_a\ne n_b$.
\end{itemize}
Then, there exists a bifurcation instant $\lambda_*\in\left]a,b\right[$ for the family $(\mathbf g_\lambda)_\lambda$.
\end{teo}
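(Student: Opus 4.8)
The plan is to recast the existence of constant scalar curvature metrics conformal to $\mathbf g_\lambda$ as a variational bifurcation problem for a family of functionals over the parameter interval $[a,b]$, and then to invoke the abstract (non-equivariant) bifurcation theorem of Smoller and Wasserman~\cite{SmoWas}, in the fiber-bundle form collected in Appendix~\ref{sec:appfiber}. First, using Remark~\ref{thm:remDeltakappa}, I would normalize the path so that $\int_M\nu_{\mathbf g_\lambda}=1$ for all $\lambda$; this affects neither $\kappa_\lambda$, $\Delta_\lambda$, $n_\lambda$, nor the hypotheses (a)--(c). Exactly as in the proof of Proposition~\ref{thm:localrigidity}, I would then form the bundle $\mathcal D\to[a,b]$ of \eqref{eq:defD}, whose fiber $\mathcal D_\lambda=\{\psi\in\mathcal C^{2,\alpha}_+(M):\int_M\psi^{m/2}\,\nu_{\mathbf g_\lambda}=1\}$ is a smooth Banach manifold, and on each fiber the smooth function $\mathcal A_\lambda(\psi)=\mathcal A(\psi\cdot\mathbf g_\lambda)$. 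By Proposition~\ref{thm:summainfactscritpt}(e), the critical points of $\mathcal A_\lambda$ on $\mathcal D_\lambda$ are precisely the unit volume constant scalar curvature metrics conformal to $\mathbf g_\lambda$, and $\psi\equiv\mathbf 1$ is a critical point for every $\lambda$, giving the \emph{trivial branch} $\lambda\mapsto\mathbf g_\lambda$. Endowing the fibers with the $L^2(\nu_{\mathbf g_\lambda})$-inner product, the vertical gradient of $(\mathcal A_\lambda)_\lambda$ is a fiber-bundle morphism whose zero locus is exactly the set of critical points and which, along the trivial branch, has the same linearization (up to a positive constant) as the map $F$ of \eqref{eq:defF}. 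Finally, a variational bifurcation instant for $(\mathcal A_\lambda)$ produces metrics that satisfy conditions (a)--(c) in the definition of bifurcation instant and converge to $\mathbf g_{\lambda_*}$; since $\mathbf g_\lambda$ is of class $\mathcal C^k$, $k\ge3$, elliptic regularity for the Yamabe equation upgrades the convergence to the topology of $\mathcal S^k(M)$, so that such an instant is a bifurcation instant for $(\mathbf g_\lambda)_\lambda$ in the required sense.

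The heart of the matter is the Jacobi operator along the trivial branch. By Proposition~\ref{thm:summainfactscritpt}(f), i.e.\ by formula \eqref{eq:secvarformHE} (equivalently, by the computation \eqref{eq:fiberderF} already performed in the proof of Proposition~\ref{thm:localrigidity}), the vertical Hessian of $\mathcal A_\lambda$ at $\psi\equiv\mathbf 1$ is, up to a positive constant, the operator
\[
J_\lambda=(m-1)\Delta_\lambda-\kappa_\lambda
\]
acting on the functions of zero $\mathbf g_\lambda$-mean. This operator is self-adjoint with compact resolvent; its realization from the zero-mean $\mathcal C^{2,\alpha}$ functions to the zero-mean $\mathcal C^{0,\alpha}$ functions is Fredholm of index $0$; and its kernel is nontrivial exactly when $\tfrac{\kappa_\lambda}{m-1}$ is a positive eigenvalue of $\Delta_\lambda$. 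Hence hypotheses (a) and (b) state precisely that the trivial branch is nondegenerate at $\lambda=a$ and $\lambda=b$. The Morse index of $\mathcal A_\lambda$ at the trivial branch is the number of negative eigenvalues of $J_\lambda$ on zero-mean functions, i.e.\ the number of \emph{positive} eigenvalues of $\Delta_\lambda$ strictly below $\tfrac{\kappa_\lambda}{m-1}$; comparing with the definition of $n_\lambda$, which in addition counts the zero eigenvalue when $\kappa_\lambda>0$, this Morse index equals $n_\lambda-1$ if $\kappa_\lambda>0$ and equals $n_\lambda=0$ if $\kappa_\lambda\le0$. Therefore, as long as $\kappa_\lambda$ keeps a fixed sign along the path — the situation occurring in the applications of Section~\ref{thm:bifproduct}, and to which the general case is reduced by subdividing $[a,b]$ at the (nondegenerate, hence locally rigid by Proposition~\ref{thm:localrigidity}) instants where $\kappa_\lambda$ vanishes — hypothesis (c), $n_a\ne n_b$, is equivalent to the statement that the Morse index of the trivial branch at $\lambda=a$ differs from that at $\lambda=b$.

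With these preparations, the proof concludes by invoking the abstract machinery: on $[a,b]$ (or on a suitable subinterval), $(\mathcal A_\lambda)_\lambda$ is a $\mathcal C^1$ family of functionals on the fibers of the bundle $\mathcal D$, with a trivial branch of critical points, Fredholm vertical gradient, nondegenerate endpoints, and a jump of the Morse index of the trivial branch between the two endpoints. The fiber-bundle bifurcation theorem of Appendix~\ref{sec:appfiber}, which specializes the result of Smoller and Wasserman~\cite{SmoWas}, then yields a bifurcation instant $\lambda_*\in\left]a,b\right[$ for the family of functionals, which by the first paragraph is a bifurcation instant for the family $(\mathbf g_\lambda)_\lambda$ of metrics.

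The point I expect to require real care is not the computation of the Jacobi operator or the eigenvalue count — these are, in essence, already done — but rather the rigorous verification that the problem fits the hypotheses of the abstract theorem: that the vertical gradient of $(\mathcal A_\lambda)$ is a well-defined $\mathcal C^1$ Fredholm morphism of the bundle $\mathcal D$ over function spaces that depend on $\lambda$ through the background metrics $\mathbf g_\lambda$ (this is exactly the setting the fiber-bundle framework of Appendix~\ref{sec:appfiber} is designed to accommodate), together with the conceptual subtlety emphasized in the Introduction — that a degeneracy of the second variation need not be a jump of the Morse index. Thus the argument must genuinely use hypothesis (c), through the bookkeeping relating $n_\lambda$ to the Morse index via the sign of $\kappa_\lambda$ and through the local rigidity at the zeros of $\kappa_\lambda$, to ensure that a true index jump — hence bifurcation — occurs.
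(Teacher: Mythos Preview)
Your proposal is correct and takes essentially the same route as the paper: normalize to unit volume, recast the problem via the fiber bundles $\mathcal D$, $\mathcal E$ and the map $F$ of \eqref{eq:defD}--\eqref{eq:defF}, identify the linearization along the trivial branch as $(m-1)\Delta_\lambda-\kappa_\lambda$ on zero-mean functions, verify the Fredholm and spectral hypotheses, and invoke the fiber-bundle version of the Smoller--Wasserman theorem (Theorem~\ref{thm:extSmoWasgeneral}). You are in fact more scrupulous than the paper about the bookkeeping between $n_\lambda$ and the Morse index when $\kappa_\lambda$ may change sign---the paper simply asserts that the two jump together---though your subdivision at the zeros of $\kappa_\lambda$ does not fully close this in the edge case $\kappa_a>0$, $n_a=1$, $\kappa_b\le0$ (Morse index $0$ at both endpoints); this edge case does not arise in the applications of Section~\ref{thm:bifproduct}, where $\kappa_\lambda$ keeps a fixed sign on the relevant intervals.
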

\begin{proof}
The result is obtained applying the non equivariant bifurcation theorem \cite[Theorem~2.1, p.\ 67]{SmoWas} to the following setup.
We will use a natural fiber bundle extension of this theorem, whose precise statement is given in Appendix~\ref{sec:appfiber}, Theorem~\ref{thm:extSmoWasgeneral}.
Assume as in the proof of Proposition~\ref{thm:localrigidity} that $\int_M\nu_{\mathbf g_\lambda}=1$ for all $\lambda$, see Remark~\ref{thm:remDeltakappa}.
Consider the fiber bundles $\mathcal D$ and $\mathcal E$, given respectively in \eqref{eq:defD} and \eqref{eq:defE},
and let $F:\mathcal D\to\mathcal E$ be the map given in \eqref{eq:defF};
the inverse image by $F$ of the null section $\mathbf 0_\mathcal D$ of $\mathcal D$ contains the constant section $\mathbf 1_\mathcal E=\{\mathbf 1\}\times[a,b]$,
and the desired result is precisely a fiberwise bifurcation result for this setup.
Let $H=L^2(M)$ denote the Hilbertable space of $L^2$-functions on $M$ with respect to any of the measures induced by the volume forms
$\nu_{\mathbf g_\lambda}$; for all $\lambda$, let $H_\lambda$ be the closed subspace of $H$ consisting of functions $\varphi$ such
that $\int_M\varphi\,\nu_{\mathbf g_\lambda}=0$, endowed with the complete inner product $\langle \phi_1,\phi_2\rangle_\lambda=\int_M\phi_1\phi_2\,\nu_{\mathbf g_\lambda}$.
Note that $T_{\mathbf 1}\mathcal D_\lambda$ is the Banach subspace of $\mathcal C^{2,\alpha}(M)$ consisting of maps
$\Phi$ such that $\int_M\Phi\,\nu_{\mathbf g_\lambda}=0$.
The inclusion $\mathcal C^{k,\alpha}(M)\subset\mathcal C^{k-2,\alpha}(M)\subset L^2(M)$ induce inclusions $T_{\mathbf 1}\mathcal D_\lambda\subset\mathcal E_\lambda\subset H_\lambda$ for all
$\lambda$.
The derivative $\mathrm dF(\cdot,\lambda)$ at $\mathbf 1$ is identified with the vertical derivative $\mathrm d_{\mathrm{ver}}F(\mathbf 1,\lambda)$ given
in \eqref{eq:fiberderF}, which is a linear operator from $T_{\mathbf 1}\mathcal D_\lambda$ to $\mathcal E_\lambda$ which is symmetric with respect
to $\langle\cdot,\cdot\rangle_\lambda$. This is a Fredholm operator of index $0$. Namely, recall
that second order self-adjoint elliptic operators acting on sections of Euclidean vector bundles over compact manifolds
are Fredholm maps of index zero from the space of $\mathcal C^{k,\alpha}$-sections to the space of $\mathcal C^{k-2,\alpha}$-sections,
$k\ge2$, see for instance \cite[\S 1.4]{Whi} and \cite[Theorem~1.1]{Whi2}. The spaces
$T_{\mathbf 1}\mathcal D_\lambda$ and $\mathcal E_\lambda$ are codimension $1$ closed subspaces of $\mathcal C^{k,\alpha}(M)$ and
of $\mathcal C^{k-2,\alpha}(M)$ respectively, and $\mathrm d_\mathrm fF(\mathbf 1,\lambda)$ carries $T_{\mathbf 1}\mathcal D_\lambda$ into
$\mathcal E_\lambda$. This implies that the restriction of $\mathrm d_\mathrm fF(\mathbf 1,\lambda)$ to $T_{\mathbf 1}\mathcal D_\lambda$,
with counterdomain $\mathcal E_\lambda$, is Fredholm of index $0$.

Since $\Delta_{\lambda}$ is a positive discrete operator, it follows that $\Delta_\lambda-\frac{\kappa_\lambda}{m-1}$ has spectrum which consists of
a sequence of finite multiplicity eigenvalues, and only a finite number of them is negative.
Note that $T_{\mathbf 1}\mathcal D_\lambda$ is a codimension $1$ closed subspace of $\mathcal C^{k,\alpha}(M)$ that is
orthogonal relatively to $\langle\cdot,\cdot\rangle_\lambda$ to the eigenspace of the first eigenvalue of $\Delta_\lambda-\frac{\kappa_\lambda}{m-1}$,
which consists of constant functions. This implies that the restriction of $\Delta_\lambda-\frac{\kappa_\lambda}{m-1}$ to $T_{\mathbf 1}\mathcal D_\lambda$
has the same eigenvalues of $\Delta_\lambda-\frac{\kappa_\lambda}{m-1}$, except for the first one (given exactly by $-\frac{\kappa_\lambda}{m-1}$),
each of them with the same eigenspace. In particular, jumps of the dimension of the negative eigenspace of $\mathrm d_\mathrm fF(\mathbf 1,\lambda)$
occur precisely when jumps of the dimension of the negative eigenspace of $\Delta_\lambda-\frac{\kappa_\lambda}{m-1}$ occur.

In conclusion, assumptions (a) and (b) imply that $\mathrm d_\mathrm fF(\mathbf 1,\lambda)$ is an isomorphism at $\lambda=a$ and at $\lambda=b$, respectively.
Assuption (c) implies that there is a jump in the dimension of the negative eigenspace of $\mathrm d_\mathrm fF(\mathbf 1,\lambda)$, as $\lambda$ runs
from $a$ to $b$. The discreteness of the spectrum implies the existence of an isolated instant $\lambda_*\in\left]a,b\right[$ where
$\mathrm d_\mathrm fF(\mathbf 1,\lambda_*)$ is singular, and where a jump of the dimension of the negative eigenspace of  $\mathrm d_\mathrm fF(\mathbf 1,\lambda)$
occurs. Bifurcation must then occur at $\lambda_*$, see Theorem~\ref{thm:extSmoWasgeneral}.
\end{proof}
One can give a more general bifurcation result using an equivariant setup.
Assume in the above situation that there exists a (finite dimensional) \emph{nice} (in the sense of \cite{SmoWas}\footnote{%
A group $G$ is nice if, given unitary representations of $G$ on the finite dimensional
inner product spaces $V$ and $W$, assuming that the quotient spaces $D(V)/S(V)$ and $D(W)/S(W)$ have the same equivariant homotopy type as $G$-spaces
($D$ is the unit disk and $S$ is the unit sphere), then the two representations are equivalent. For instance, denoting by $G_0$
the connected component of the identity of $G$, $G$ is nice if either $G/G_0 =\{1\}$ or if $G/G_0$ is the product of a finite number of copies of $\mathds Z_2$, or of a finite number of copies
of $\mathds Z_3$.})
Lie group $G$ of diffeomorphisms of $M$ that preserves
all the metrics $\mathbf g_\lambda$. This means that, denoting by $I_\lambda$ the isometry group of $(M,\mathbf g_\lambda)$,
$G$ is contained in the intersection $\bigcap_{\lambda\in[a,b]}I_\lambda$. It is easy to see that for every $\lambda$ and
every eigenvalue $\rho$ of $\Delta_\lambda$, one has a linear (anti-)representation\footnote{Note that the action of $G$ on $\mathcal M^k(M)$
by pull-back is on the right.} of $\pi_{\lambda,\rho}:G\to\mathrm{GL}(V_{\lambda,\rho})$,
where $V_{\lambda,\rho}$ is the $\rho$-eigenspace of $\Delta_{\mathbf g_\lambda}$. Such a representation is defined by:
\[\pi_{\lambda,\rho}(\phi)f=f\circ\phi,\]
for all $\phi\in G$ and all $f\in V_{\lambda,\rho}$.
For all $\lambda$, let us denote by $\pi^-_\lambda$ the direct sum representation:
\[\pi^-_\lambda=\bigoplus\limits_{\rho\le\frac{\kappa_\lambda}{m-1}}\pi_{\lambda,\rho}\] of $G$ on the vector  space
$V^-_\lambda$ given by the direct sum:
\[V^-_\lambda=\bigoplus_{\rho\le\frac{\kappa_\lambda}{m-1}}V_{\lambda,\rho}.\] Recall that two linear representations $\pi_i:G\to\mathrm{GL}(V_i)$, $i=1,2$, of
the group $G$ on the vector space $V_i$ are \emph{equivalent} if there exists an isomorphism $T:V_1\to V_2$ such that
$\pi_2(g)\circ T=T\circ\pi_1(g)$ for all $g\in G$.

We then have the following extension of Theorem~\ref{thm:mainbif}:
\begin{teo}\label{thm:mnainbifequiv}
In the above situation, assume that:
\begin{itemize}
\item $\frac{\kappa_a}{m-1}$ is either equal to $0$, or it is not an eigenvalue of $\Delta_{\mathbf g_a}$;
\smallskip

\item $\frac{\kappa_b}{m-1}$ is either equal to $0$, or it is not an eigenvalue of $\Delta_{\mathbf g_b}$;
\smallskip

\item $\pi^-_a$ and $\pi^-_b$ are not equivalent.
\end{itemize}
Then, there exists a bifurcation instant $\lambda_*\in\left]a,b\right[$ for the family $(\mathbf g_\lambda)_\lambda$.
\end{teo}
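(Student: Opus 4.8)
The plan is to repeat \emph{verbatim} the reduction carried out in the proof of Theorem~\ref{thm:mainbif}, and then to feed the resulting path of Fredholm operators into the \emph{equivariant} abstract bifurcation theorem of Smoller and Wasserman \cite{SmoWas} (in the fiber bundle form of Appendix~\ref{sec:appfiber}) rather than into the non equivariant one, so that the Morse index count is replaced by the isomorphism class of the $G$-representation on the negative eigenspace. After normalizing $\int_M\nu_{\mathbf g_\lambda}=1$ for all $\lambda$ (Remark~\ref{thm:remDeltakappa}), one works with the fiber bundles $\mathcal D$ and $\mathcal E$ of \eqref{eq:defD}--\eqref{eq:defE} and the bundle morphism $F$ of \eqref{eq:defF}, whose zero set along the trivial section $\{\mathbf 1\}\times[a,b]$ parametrizes the unit volume constant scalar curvature metrics conformal to $\mathbf g_\lambda$. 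The new ingredient is that $G$ acts fiberwise on $\mathcal D$ and on $\mathcal E$ by $\psi\mapsto\psi\circ\phi$; since each $\phi\in G$ is an isometry of every $\mathbf g_\lambda$, one has $\phi^*(\psi\cdot\mathbf g_\lambda)=(\psi\circ\phi)\cdot\mathbf g_\lambda$, hence $\kappa_{(\psi\circ\phi)\cdot\mathbf g_\lambda}=\kappa_{\psi\cdot\mathbf g_\lambda}\circ\phi$, and $\phi$ preserves $\nu_{\mathbf g_\lambda}$. It follows that the defining constraints of $\mathcal D$ and $\mathcal E$ are $G$-invariant, that $F$ is $G$-equivariant, and that the trivial section consists of $G$-fixed points (because $\mathbf 1\circ\phi=\mathbf 1$); moreover the action is by isometries of the fiberwise inner products $\langle\cdot,\cdot\rangle_\lambda$. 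This is exactly the setting in which the equivariant fiberwise bifurcation theorem applies.

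Next I would identify the $G$-representations that enter. By \eqref{eq:fiberderF}, the vertical derivative $\mathrm d_{\mathrm{ver}}F(\mathbf 1,\lambda)$ restricted to $T_{\mathbf 1}\mathcal D_\lambda$ equals $\tfrac{m-2}2\big((m-1)\Delta_\lambda-\kappa_\lambda\big)$, a $G$-equivariant self-adjoint Fredholm operator of index $0$ (Fredholmness exactly as in the proof of Theorem~\ref{thm:mainbif}); since $\tfrac{m-2}2>0$, its negative eigenspace is $\bigoplus_{0<\rho<\frac{\kappa_\lambda}{m-1}}V_{\lambda,\rho}$, carrying the $G$-representation $\bigoplus_{0<\rho<\frac{\kappa_\lambda}{m-1}}\pi_{\lambda,\rho}$. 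At $\lambda=a$ and $\lambda=b$ the number $\frac{\kappa_\lambda}{m-1}$ is either zero or not an eigenvalue of $\Delta_\lambda$, so $\mathrm d_{\mathrm{ver}}F(\mathbf 1,\lambda)$ is an isomorphism there; and the corresponding negative eigenspace representation differs from $\pi^-_\lambda$ only by the common one dimensional trivial summand supported on the constant functions (the $\rho=0$ eigenspace, on which $G$ acts trivially since it is spanned by $\mathbf 1$). Because the $G$-action on each eigenspace is orthogonal with respect to $\langle\cdot,\cdot\rangle_\lambda$, these representations are completely reducible, so cancellation of the common trivial summand is legitimate; hence $\pi^-_a\not\cong\pi^-_b$ if and only if the negative eigenspace representations of $\mathrm d_{\mathrm{ver}}F(\mathbf 1,a)$ and of $\mathrm d_{\mathrm{ver}}F(\mathbf 1,b)$ are inequivalent.

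Finally I would invoke the equivariant Smoller--Wasserman theorem \cite{SmoWas} for the path $\lambda\mapsto\mathrm d_{\mathrm{ver}}F(\mathbf 1,\lambda)$: the endpoints are nondegenerate, $G$ is nice by hypothesis, and the negative eigenspace representations at $a$ and $b$ are inequivalent by the previous paragraph; discreteness of the spectrum of $\Delta_\lambda-\frac{\kappa_\lambda}{m-1}$ gives an isolated instant $\lambda_*\in\left]a,b\right[$ across which the negative representation changes, and the theorem produces bifurcation of the zero set of $F$ off the trivial section at $\lambda_*$. Translating back through the correspondence of the first paragraph, $\lambda_*$ is a bifurcation instant for $(\mathbf g_\lambda)_\lambda$. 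The step I expect to be the main obstacle is the representation bookkeeping leading to the equivalence in the second paragraph, namely checking that the passage from the abstract hypothesis ``$\pi^-_a\not\cong\pi^-_b$'' to the inequivalence of the \emph{actual} negative eigenspace representations of the vertical derivatives is harmless — this rests on the trivial $\rho=0$ summand and on complete reducibility — together with confirming that niceness of $G$ plus this inequivalence are precisely the inputs consumed by the equivariant machinery; everything else is the Fredholmness and discreteness verification already performed in the proof of Theorem~\ref{thm:mainbif}.
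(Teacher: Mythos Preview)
Your proposal is correct and follows essentially the same approach as the paper's own proof: set up the $G$-action on the bundles $\mathcal D$ and $\mathcal E$ by $(\psi,\lambda)\cdot\phi=(\psi\circ\phi,\lambda)$, check equivariance of $F$ and that the trivial section is $G$-fixed, and then invoke the equivariant fiber-bundle version of Smoller--Wasserman (Theorem~\ref{thm:extSmoWasequivariant}) with the Fredholmness and spectral verifications carried over from the proof of Theorem~\ref{thm:mainbif}. Your extra care with the representation bookkeeping---cancelling the common trivial summand on constants to pass from $\pi^-_\lambda$ to the actual negative eigenspace representation of $\mathrm d_{\mathrm{ver}}F(\mathbf 1,\lambda)$---is exactly the point the paper leaves implicit when it says the remaining assumptions ``are easily checked, as in the proof of Theorem~\ref{thm:mainbif}''.
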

\begin{proof}
This uses the equivariant bifurcation result of \cite[Theorem~3.1]{SmoWas}, applied to the setup described in
the proof of Theorem~\ref{thm:extSmoWasgeneral}. See Theorem~\ref{thm:extSmoWasequivariant} for the precise statement
needed for our purposes. Note that the (right) action of $G$ on $\mathcal D$ is given by $(\psi,\lambda)\cdot\phi=(\psi\circ\phi,\lambda)$,
for all $(\psi,\lambda)\in\mathcal D$ and all $\phi\in G$, similarly for the action of $G$ on $\mathcal E$,
and the function $F$ is equivariant with respect to this action. Clearly, constant functions are fixed by this action, and
the remaining assumptions of Theorem~\ref{thm:extSmoWasequivariant} are easily checked, as in the proof of Theorem~\ref{thm:mainbif}.
\end{proof}
\end{section}
\begin{section}{Bifurcation in product manifolds}
\label{thm:bifproduct}
Let $\big(M_0,\mathbf g^{(0)}\big)$, $\big(M_1,\mathbf g^{(1)}\big)$ be compact Riemannian manifolds with constant scalar curvature denoted by
$\kappa^{(0)}$ and $\kappa^{(1)}$ respectively. Let $m_0$ (resp.\ $m_1$) be the dimension of $M_0$ (resp.\ $M_1$), and assume $m_0+m_1\ge3$. For all
$\lambda\in\left]0,+\infty\right[$ denote by $\mathbf g_\lambda=\mathbf g^{(0)}\oplus \lambda\cdot\mathbf g^{(1)}$ the
metric on $M=M_0\times M_1$. Clearly, $\mathbf g_\lambda$ has constant scalar curvature
\begin{equation}\label{eq:kappalambda}
\kappa_\lambda=\kappa^{(0)}+\tfrac1\lambda\,\kappa^{(1)}.
\end{equation}
Observe that, as to degeneracy instants and bifurcation, the role played by the manifolds $\big(M_0,\mathbf g^{(0)}\big)$ and
$\big(M_1,\mathbf g^{(1)}\big)$ is symmetric. Namely, degeneracy instants and bifurcation instants for the family $(\mathbf g_\lambda)_\lambda$
coincide respectively with degeneracy instants and bifurcation instants for the family of metrics $\mathbf h_\lambda=\frac1\lambda\,\mathbf g^{(0)}\oplus\mathbf g^{(1)}$
on $M=M_0\times M_1$.

Set $m=m_0+m_1=\mathrm{dim}(M)$, and let $\mathcal J_\lambda$ be the Jacobi operator of the Hilbert--Einstein functional along $\mathbf g_\lambda$, given by:
$$\mathcal J_\lambda=\Delta_{\lambda}-\frac{\kappa_{\lambda}}{m-1},$$
defined on the space:
\[\Big\{\Psi\in\mathcal C^{2,\alpha}(M):{\textstyle\int}_M\Psi\,\nu_{\mathbf g_\lambda}=0\Big\}\]
and taking values in the space:
\[\Big\{\Phi\in\mathcal C^{0,\alpha}(M):{\textstyle\int}_M\Phi\,\nu_{\mathbf g_\lambda}=0\Big\};\]
let $\Sigma(\mathcal J_\lambda)$ be its spectrum. This spectrum coincides with the spectrum of $\Delta_{\lambda}-\frac{\kappa_{\lambda}}{m-1}$ as
an operator from $\mathcal C^{2,\alpha}(M)$ to $\mathcal C^{0,\alpha}(M)$, with the point $-\frac{\kappa_{\lambda}}{m-1}$ removed.

Denote by $0=\rho^{(i)}_1<\rho^{(i)}_2<\rho^{(i)}_3<\ldots$ the sequence of eigenvalues of $\Delta_{\mathbf g^{(i)}}$, $i=0,1$,
and denote by $\mu^{(i)}_j$ the multiplicity of $\rho^{(i)}_j$;
Then:
\[\Sigma(\mathcal J_\lambda)=\Big\{\sigma_{i,j}(\lambda): i,j\geq 0, i+j>0\Big\},\]
where:
\begin{equation}\label{eigenij}
\sigma_{i,j}(\lambda)=\rho^{(0)}_i+\tfrac1\lambda\,\rho^{(1)}_j-\frac1{m-1}\left(\kappa^{(0)}+\tfrac1\lambda\,\kappa^{(1)}\right).
\end{equation}
The multiplicity of $\sigma_{i,j}(\lambda)$ in $\Sigma(\mathcal J_\lambda)$  is equal to the product $\mu^{(0)}_i\mu^{(1)}_j$,
note however that the $\sigma_{i,j}$'s need not be all distinct.
Our interest is to determine the distribution of zeros of the functions $\lambda\mapsto\sigma_{i,j}(\lambda)$ as $i$ and $j$ vary;
such zeros correspond to degeneracy instants of the Jacobi operator $\mathcal J_\lambda$.
Towards this goal, we make a preliminary observation.
\begin{rem}\label{thm:rematmost}
Each function $\sigma_{i,j}$ which is not identically zero
has at most one zero in $\left]0,+\infty\right[$. Moreover,
for any fixed $i$ and $\overline\lambda\in\left]0,+\infty\right[$, there is at most one $j$ for which $\sigma_{i,j}(\overline\lambda)=0$.
This depends on the fact that the sequence $j\mapsto\rho^{(1)}_j$ is strictly increasing. Similarly, for each $j$ and $\overline\lambda\in\left]0,+\infty\right[$, there is at most one value of $i$ for which $\sigma_{i,j}(\overline\lambda)=0$.
\end{rem}

Let $i_*$ and $j_*$ be the smallest nonnegative integers with the property that:
\begin{equation}\label{eq:defi*j*}
\rho^{(0)}_{i_*}\ge\frac{\kappa^{(0)}}{m-1},\quad\rho^{(1)}_{j_*}\ge\frac{\kappa^{(1)}}{m-1}.
\end{equation}
Let us say that the pair of metrics $\big(\mathbf g^{(0)},\mathbf g^{(1)}\big)$ is \emph{degenerate}
if equalities hold in both inequalities of \eqref{eq:defi*j*}. In this situation, the Jacobi operator $\mathcal J_\lambda$ is degenerate for
all $\lambda>0$, namely, $\sigma_{i_*,j_*}(\lambda)=0$ for all $\lambda$.
\begin{rem}\label{thm:remdegeneracyEinstein}
Clearly, if either $\kappa^{(0)}<0$ or $\kappa^{(1)}<0$, then $\big(\mathbf g^{(0)},\mathbf g^{(1)}\big)$ is not degenerate.
We observe also that if either one of the two metrics $\mathbf g^{(0)}$ or $\mathbf g^{(1)}$ is Einstein with positive scalar curvature, then
the pair $\big(\mathbf g^{(0)},\mathbf g^{(1)}\big)$ is never degenerate. Namely, if say $\mathbf g^{(0)}$ is Einstein and $\kappa^{(0)}>0$,
then $\kappa^{(0)}=m_0\mathrm{Ric}_{\mathbf g^{(0)}}$; using Lichnerowicz--Obata theorem (see for instance \cite[Ch.~3, \S D]{BerGauMaz},
or \cite{Oba62}) one gets:
\[\rho^{(0)}_1\ge\frac{m_0}{m_0-1}\mathrm{Ric}_{\mathbf g^{(0)}}=\frac{\kappa^{(0)}}{m_0-1}>\frac{\kappa^{(0)}}{m-1}.\]
This says that $i_*=1$, and that equality does not hold in the first inequality of \eqref{eq:defi*j*}.
We note however that when the metrics $\mathbf g^{(0)}$ and  $\mathbf g^{(1)}$ are not Einstein, then the integers $i_*$ and $j_*$
defined above can be arbitrarily large. For instance, given any manifold $(\overline M,\overline{\mathbf g})$ with positive scalar curvature
$\overline\kappa$, then the product Riemannian manifold $M_0=\overline M\times\mathds S^1(r)$, where $\mathds S^1(r)$ is the circle of radius $r>0$,
has constant scalar curvature larger than $\overline\kappa$, and every eigenvalue of its Laplace--Beltrami operator goes to $0$ as $r\to+\infty$.
This implies that $i_*$ becomes arbitrarily large as $r\to+\infty$.
\end{rem}
Except for case of degenerate pairs, the operator $\mathcal J_\lambda$ is singular only at a discrete countable set of instants $\lambda$ in $\left]0,+\infty\right[$.
We consider separately the (most interesting) case that both scalar curvatures $\kappa^{(0)}$ and $\kappa^{(1)}$ are positive.
\subsection{The case of positive scalar curvatures}
\begin{lem}\label{thm:zerossigmaij}
Assume $\big(\mathbf g^{(0)},\mathbf g^{(1)}\big)$ non degenerate, and that $\kappa^{(0)},\kappa^{(1)}>0$. The functions $\sigma_{i,j}(\lambda)$ satisfy the following properties.
\begin{itemize}
\item[(a)]  For all $i,j\ge0$, the map $\lambda\mapsto\sigma_{i,j}(\lambda)$ is strictly monotone in $\left]0,+\infty\right[$, except possibly the
maps $\sigma_{i,j_*}$, that are constant equal to $\rho^{(0)}_i-\frac{\kappa^{(0)}}{m-1}$ when $\rho^{(1)}_{j_*}=\frac{\kappa^{(1)}}{m-1}$.
\smallskip\

\item[(b)]  For $i\ne i_*$ and $j\ne j_*$, the map $\sigma_{i,j}(\lambda)$ admits a zero if and only if:
\begin{itemize}
\item either $j<j_*$ and $i>i_*$, in which case $\sigma_{i,j}$ is strictly increasing,
\item or if $j>j_*$ and $i<i_*$, in which case $\sigma_{i,j}$ is strictly decreasing.
\end{itemize}
\smallskip

\item[(c)] If $\rho^{(0)}_{i_*}=\frac{\kappa^{(0)}}{m-1}$, then $\sigma_{i_*,j}$ does not have zeros for any $j$.
If $\rho^{(0)}_{i_*}>\frac{\kappa^{(0)}}{m-1}$, then $\sigma_{i_*,j}$ has a zero if and only if $j<j_*$.
\smallskip

\item[(d)] If $\rho^{(1)}_{j_*}=\frac{\kappa^{(1)}}{m-1}$, then $\sigma_{i,j_*}$ does not have zeros for any $i$.
If $\rho^{(1)}_{j_*}>\frac{\kappa^{(1)}}{m-1}$, then $\sigma_{i,j_*}$ has a zero if and only if $i<i_*$.
\end{itemize}
\end{lem}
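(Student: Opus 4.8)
The plan is to reduce everything to the elementary study of one-parameter families of the form $\lambda\mapsto A+\tfrac1\lambda B$ with $A,B\in\mathds R$ constants. Grouping in \eqref{eigenij} the $\lambda$-independent terms and those proportional to $\tfrac1\lambda$, one writes
\[\sigma_{i,j}(\lambda)=a_i+\tfrac1\lambda\,b_j,\qquad a_i:=\rho^{(0)}_i-\tfrac{\kappa^{(0)}}{m-1},\quad b_j:=\rho^{(1)}_j-\tfrac{\kappa^{(1)}}{m-1}.\]
Because $i\mapsto\rho^{(0)}_i$ and $j\mapsto\rho^{(1)}_j$ are strictly increasing, the definition of $i_*$, $j_*$ in \eqref{eq:defi*j*} pins down the signs of these scalars completely: $a_i<0$ for $i<i_*$, $a_i>0$ for $i>i_*$ and $a_{i_*}\ge0$; symmetrically $b_j<0$ for $j<j_*$, $b_j>0$ for $j>j_*$ and $b_{j_*}\ge0$. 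Non-degeneracy of $\big(\mathbf g^{(0)},\mathbf g^{(1)}\big)$ is exactly the statement that $a_{i_*}$ and $b_{j_*}$ do not both vanish, and this is the only place that hypothesis will be used.

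For part (a) I would differentiate: $\sigma_{i,j}'(\lambda)=-\tfrac1{\lambda^2}\,b_j$, so $\sigma_{i,j}$ is strictly decreasing if $b_j>0$, strictly increasing if $b_j<0$, and constant exactly when $b_j=0$. Since $b_j=0$ means $\rho^{(1)}_j=\tfrac{\kappa^{(1)}}{m-1}$, strict monotonicity of the $\rho^{(1)}$-sequence allows this for at most one index, which must be $j=j_*$; in that case $\sigma_{i,j_*}\equiv a_i=\rho^{(0)}_i-\tfrac{\kappa^{(0)}}{m-1}$, the exceptional family named in (a). En passant this records the sign dictionary needed later: for $b_j\neq0$, $\sigma_{i,j}$ is strictly increasing precisely when $j<j_*$ and strictly decreasing precisely when $j>j_*$.

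For parts (b), (c), (d) the point is that, when $a_i\neq0$, the function $a_i+\tfrac1\lambda b_j$ has the unique zero $\lambda=-b_j/a_i$, which lies in $\left]0,+\infty\right[$ if and only if $a_i$ and $b_j$ are nonzero and of opposite sign (and there is no zero at all when $a_i=0$, unless $b_j=0$ as well). Combining this with the sign table of the first paragraph: for $i\neq i_*$, $j\neq j_*$ one has $a_i\neq0\neq b_j$, and opposite signs occur exactly in the two configurations ``$i>i_*,\ j<j_*$'' (then $b_j<0$, so $\sigma_{i,j}$ strictly increasing) and ``$i<i_*,\ j>j_*$'' (then $b_j>0$, so $\sigma_{i,j}$ strictly decreasing), which is (b). For $i=i_*$ one distinguishes $a_{i_*}=0$ --- where, non-degeneracy forcing $b_{j_*}>0$, the scalar $b_j$ never vanishes and $\sigma_{i_*,j}(\lambda)=\tfrac1\lambda b_j$ has no zero for any $j$ --- from $a_{i_*}>0$, where $-b_j/a_{i_*}>0$ iff $b_j<0$ iff $j<j_*$; this is (c), and (d) is the mirror image with the roles of the two factors exchanged.

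I do not expect a genuine obstacle: modulo the trivial calculus of $\lambda\mapsto A+\tfrac1\lambda B$, the whole lemma is a sign analysis of the two numbers $a_i$, $b_j$. The only delicate point --- and the only one that actually consumes a hypothesis --- is the bookkeeping of the four borderline regimes $i=i_*$ and/or $j=j_*$, where $a_{i_*}$ or $b_{j_*}$ may be zero: there one must apply the ``opposite signs'' criterion together with the non-degeneracy exclusion of the simultaneous vanishing $a_{i_*}=b_{j_*}=0$, so as not to miss or double-count a zero.
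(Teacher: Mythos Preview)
Your proposal is correct and follows precisely the approach indicated in the paper: the paper's own proof is a single sentence pointing out that one should write $\sigma_{i,j}(\lambda)=A_i+\tfrac1\lambda B_j$ with $A_i=\rho_i^{(0)}-\tfrac{\kappa^{(0)}}{m-1}$ and $B_j=\rho_j^{(1)}-\tfrac{\kappa^{(1)}}{m-1}$, and you carry out that same decomposition (with $a_i$, $b_j$ in place of $A_i$, $B_j$) and supply the routine sign analysis that the paper leaves to the reader.
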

\begin{proof}
The entire statement follows readily from a straightforward analysis of \eqref{eigenij}, writing $\sigma_{i,j}(\lambda)=A_i+\frac1\lambda{B_j}$,
with $A_i=\rho_i^{(0)}-\frac{\kappa^{(0)}}{m-1}$, and $B_j=\rho_j^{(1)}-\frac{\kappa^{(1)}}{m-1}$.
\end{proof}

\begin{cor}\label{thm:cordeginstants}
If $\big(\mathbf g^{(0)},\mathbf g^{(1)}\big)$ is non degenerate, then the set of instants $\lambda$ in the open half line $\left]0,+\infty\right[$ at which the Jacobi operator
is singular is countable and discrete; it consists of a strictly increasing unbounded sequence and a strictly decreasing sequence tending to
$0$. For all other values of $\lambda$, $\mathcal J_\lambda$ is an isomorphism, and in particular,
the family $(\mathbf g_\lambda)_\lambda$ is locally rigid at these instants.
\end{cor}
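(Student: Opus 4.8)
The plan is to deduce the statement almost entirely from Lemma~\ref{thm:zerossigmaij} and Remark~\ref{thm:rematmost}, the corollary being in effect a bookkeeping statement about the zeros of the functions $\sigma_{i,j}$. First I would fix the dictionary between singular instants of $\mathcal J_\lambda$ and degeneracy instants. Since $\kappa^{(0)},\kappa^{(1)}>0$, formula \eqref{eq:kappalambda} gives $\kappa_\lambda>0$ for every $\lambda\in\left]0,+\infty\right[$, so the point $-\tfrac{\kappa_\lambda}{m-1}$ deleted in passing from the spectrum of $\Delta_\lambda-\tfrac{\kappa_\lambda}{m-1}$ to $\Sigma(\mathcal J_\lambda)$ is never $0$; hence $\mathcal J_\lambda$ is singular exactly when $0\in\Sigma(\mathcal J_\lambda)$, i.e.\ when $\sigma_{i,j}(\lambda)=0$ for some pair $(i,j)$, which is precisely the condition that $\lambda$ be a degeneracy instant. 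For every other $\lambda$, $\tfrac{\kappa_\lambda}{m-1}$ is not an eigenvalue of $\Delta_\lambda$, so $\mathcal J_\lambda$ --- which is Fredholm of index $0$, exactly as argued in the proof of Theorem~\ref{thm:mainbif} --- has trivial kernel and is therefore an isomorphism; and Proposition~\ref{thm:localrigidity}, applied to the affine (in particular smooth) path $\lambda\mapsto\mathbf g_\lambda=\mathbf g^{(0)}\oplus\lambda\,\mathbf g^{(1)}$, yields local rigidity at such $\lambda$. Thus it only remains to describe the set $\Lambda$ of all zeros, in $\left]0,+\infty\right[$, of the functions $\sigma_{i,j}$.

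Countability is then immediate: by Remark~\ref{thm:rematmost} every $\sigma_{i,j}$ that is not identically zero has at most one zero, and non-degeneracy of $\big(\mathbf g^{(0)},\mathbf g^{(1)}\big)$ forbids an identically-zero $\sigma_{i,j}$ (the only candidate being $\sigma_{i_*,j_*}$), so $\Lambda$ is a countable union of singletons. For the structure I would use Lemma~\ref{thm:zerossigmaij}: writing $\sigma_{i,j}(\lambda)=A_i+\tfrac1\lambda B_j$ with $A_i=\rho^{(0)}_i-\tfrac{\kappa^{(0)}}{m-1}\to+\infty$ and $B_j=\rho^{(1)}_j-\tfrac{\kappa^{(1)}}{m-1}\to+\infty$, parts (a)--(d) of that lemma say that the $\sigma_{i,j}$ possessing a zero split into two classes: the strictly increasing ones, for which $B_j<0<A_i$, so that $j$ ranges over the finite set $\{j<j_*\}$ while $i$ ranges over an infinite set, and the zero equals $|B_j|/A_i$; and the strictly decreasing ones, for which $A_i<0<B_j$, so that $i$ ranges over the finite set $\{i<i_*\}$ while $j$ ranges over an infinite set, and the zero equals $B_j/|A_i|$.

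It then remains to check that the first class contributes a strictly decreasing sequence tending to $0$ and the second a strictly increasing unbounded sequence, and that their union $\Lambda$ is discrete in $\left]0,+\infty\right[$. For the increasing class, fixing $j$ (finitely many choices) and letting $i\to+\infty$ gives $|B_j|/A_i\downarrow0$, with only finitely many of these values $\ge\varepsilon$ for any fixed $\varepsilon>0$; so this class's zeros form a set that is bounded above, has $0$ as its only accumulation point, and is discrete in $\left]0,+\infty\right[$ --- and it is infinite, because $\kappa^{(1)}>0$ forces $\rho^{(1)}_1=0<\tfrac{\kappa^{(1)}}{m-1}$, hence $j_*\ge2$ and the value $j=1$ is admissible while infinitely many $i$ are. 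Listing these zeros in decreasing order produces the desired strictly decreasing sequence $\to0$. Symmetrically, using $i_*\ge2$ (from $\kappa^{(0)}>0$), the decreasing class's zeros $B_j/|A_i|$ are bounded below by a positive constant, have $+\infty$ as their only accumulation point, and are infinite; listing them in increasing order gives the strictly increasing unbounded sequence. The union of the two sets is exactly $\Lambda$, which is therefore countable, discrete in $\left]0,+\infty\right[$, and of the asserted form.

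I do not expect a genuine obstacle here, since all the analytic content sits in Lemma~\ref{thm:zerossigmaij}. The only points requiring a little care are the two boundary possibilities allowed by non-degeneracy --- $\rho^{(0)}_{i_*}=\tfrac{\kappa^{(0)}}{m-1}$, handled by Lemma~\ref{thm:zerossigmaij}(c), or $\rho^{(1)}_{j_*}=\tfrac{\kappa^{(1)}}{m-1}$, handled by parts (a) and (d) --- in which one of $\sigma_{i_*,\cdot}$, $\sigma_{\cdot,j_*}$ contributes no zeros, and the verification that each of the two sequences is genuinely infinite, which is precisely the observation $i_*,j_*\ge2$ used above.
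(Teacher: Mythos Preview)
Your argument is correct and follows essentially the same route as the paper: identify the degeneracy instants with the zeros of the $\sigma_{i,j}$, use Lemma~\ref{thm:zerossigmaij} to split them into the two classes $B_j<0<A_i$ and $A_i<0<B_j$, and read off from the explicit formula $\lambda_{i,j}=\vert B_j/A_i\vert$ that the first class accumulates only at $0$ and the second only at $+\infty$. Your write-up is in fact more careful than the paper's terse version --- you make explicit why both sequences are genuinely infinite (via $i_*,j_*\ge2$), why $\Lambda$ is discrete, and why $\mathcal J_\lambda$ is an isomorphism off $\Lambda$ --- but there is no substantive difference in method.
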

\begin{proof}
By Lemma~\ref{thm:zerossigmaij}, each function $\sigma_{i,j}$ has at most one zero, thus there is only a countable numbers of degeneracy instants for $\mathcal J_\lambda$.
For $j>j_*$ and $i< i_*$, the zero $\lambda_{i,j}$ of $\sigma_{i,j}$  satisfies:
\[\lambda_{i,j}=\left\vert\frac{B_j}{A_i}\right\vert\ge B_j\cdot\Big[\frac{\kappa^{(0)}}{m-1}-\rho^{(0)}_{i_*-1}\Big]^{-1}\longrightarrow+\infty,\quad\text{as $j\to+\infty$}.\]
Similarly, for $i>i_*$ and $j<j_*$, the zero $\lambda_{i,j}$ of $\sigma_{i,j}$ satisfies:
\[0<\lambda_{i,j}=\left\vert\frac{B_j}{A_i}\right\vert\le A_i^{-1}\cdot\frac{\kappa^{(1)}}{m-1}\longrightarrow0,\quad\text{as $i\to+\infty$}.\]
The conclusion follows.
\end{proof}

\begin{teo}\label{thm:Yamabebif}
Let $\big(M_0,\mathbf g^{(0)}\big)$ and $\big(M_1,\mathbf g^{(1)}\big)$ be compact Riemannian manifolds with positive constant
scalar curvature; assume that the pair $\big(\mathbf g^{(0)},\mathbf g^{(1)}\big)$ is nondegenerate.
For $\lambda\in\left]0,+\infty\right[$, let $\mathbf g_\lambda$ denote the metric $\mathbf g^{(0)}\oplus\lambda\,\mathbf g^{(1)}$
on the product $M_0\times M_1$. Then, there exists a sequence $\big(\lambda^{(1)}_n\big)_n$ tending to $0$ as $n\to\infty$ and a sequence
$\big(\lambda^{(2)}_n\big)_n$ tending
to $+\infty$ as $n\to\infty$ consisting of bifurcation instants for the family $(\mathbf g_\lambda)_\lambda$.
\end{teo}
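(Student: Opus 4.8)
The plan is to apply the bifurcation Theorem~\ref{thm:mainbif} to two families of compact subintervals of $\left]0,+\infty\right[$, one shrinking towards $0$ and one growing towards $+\infty$, exploiting the fact that the Morse index $n_\lambda$ of the family jumps infinitely often near each of the two ends. We may assume the factor metrics are smooth, so that $\lambda\mapsto\mathbf g_\lambda=\mathbf g^{(0)}\oplus\lambda\,\mathbf g^{(1)}$ is a smooth path of smooth metrics, of constant scalar curvature $\kappa_\lambda=\kappa^{(0)}+\tfrac{1}{\lambda}\,\kappa^{(1)}$ by \eqref{eq:kappalambda}, and Theorem~\ref{thm:mainbif} applies to its restriction to any compact subinterval of $\left]0,+\infty\right[$. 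By Corollary~\ref{thm:cordeginstants}, using the nondegeneracy of the pair, the set of degeneracy instants --- equivalently, by \eqref{eigenij}, the set of $\lambda$ at which some $\sigma_{i,j}(\lambda)$ vanishes --- is discrete in $\left]0,+\infty\right[$ and accumulates only at $0$ and at $+\infty$. Hence the integer $n_\lambda$ appearing in Theorem~\ref{thm:mainbif} (the number, counted with multiplicity $\mu^{(0)}_i\mu^{(1)}_j$, of pairs $(i,j)$ with $\sigma_{i,j}(\lambda)<0$) is finite for every $\lambda>0$ and is locally constant on the complement of this discrete set; on that complement $\mathbf g_\lambda$ is nondegenerate, and since $\kappa_\lambda>0$ one has $\tfrac{\kappa_\lambda}{m-1}\ne0$, so any endpoints chosen there satisfy hypotheses (a) and (b) of Theorem~\ref{thm:mainbif}.

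The crucial step is to show that $n_\lambda\to+\infty$ as $\lambda\to0^+$ and as $\lambda\to+\infty$. For the behaviour near $0$, I would look in \eqref{eigenij} only at the zero eigenvalue $\rho^{(1)}_1=0$ of $\Delta_{\mathbf g^{(1)}}$: for each $i$ one has $\sigma_{i,1}(\lambda)=\rho^{(0)}_i-\tfrac{1}{m-1}\bigl(\kappa^{(0)}+\tfrac{\kappa^{(1)}}{\lambda}\bigr)<0$ exactly when $\rho^{(0)}_i<\tfrac{\kappa^{(0)}}{m-1}+\tfrac{\kappa^{(1)}}{(m-1)\lambda}$, and since $\kappa^{(1)}>0$ the right-hand side tends to $+\infty$ as $\lambda\to0^+$, so the number of such $i$ counted with multiplicity $\mu^{(0)}_i$, which is a lower bound for $n_\lambda$, tends to $+\infty$. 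Symmetrically, restricting to $\rho^{(0)}_1=0$ gives $\sigma_{1,j}(\lambda)<0$ exactly when $\rho^{(1)}_j<\tfrac{\kappa^{(1)}}{m-1}+\tfrac{\lambda\,\kappa^{(0)}}{m-1}$, whose right-hand side tends to $+\infty$ as $\lambda\to+\infty$ because $\kappa^{(0)}>0$, so $n_\lambda\to+\infty$ there too. (Equivalently, by Lemma~\ref{thm:zerossigmaij}, every degeneracy instant sufficiently close to $0$ is the zero of one of the \emph{strictly increasing} functions $\sigma_{i,j}$ with $i>i_*$ and $j<j_*$, so near $0$ the integer $n_\lambda$ is non-increasing in $\lambda$ and unbounded, and dually near $+\infty$.)

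To conclude, I would combine the discreteness of the degeneracy instants with $n_\lambda\to+\infty$ to select a strictly decreasing sequence $a_1>a_2>\cdots$ of non-degeneracy instants with $a_k\to0$ and $n_{a_{k+1}}>n_{a_k}$ for every $k$ (possible because $n_\lambda$ is constant on each connected component of the complement of the degeneracy set and diverges as $\lambda\to0^+$). Applying Theorem~\ref{thm:mainbif} on each interval $[a_{k+1},a_k]$ --- hypotheses (a),(b) hold since the endpoints are non-degeneracy instants, and (c) holds since $n_{a_{k+1}}\ne n_{a_k}$ --- yields a bifurcation instant $\lambda^{(1)}_k\in\left]a_{k+1},a_k\right[$; these lie in pairwise disjoint open intervals shrinking to $0$, hence $\lambda^{(1)}_k\to0$. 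Dually, choosing non-degeneracy instants $b_1<b_2<\cdots$ with $b_k\to+\infty$ and $n_{b_{k+1}}>n_{b_k}$ and applying Theorem~\ref{thm:mainbif} on $[b_k,b_{k+1}]$ produces bifurcation instants $\lambda^{(2)}_k\in\left]b_k,b_{k+1}\right[$ with $\lambda^{(2)}_k\to+\infty$, as required.

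The only genuinely delicate point is the one stressed in the Introduction: a degeneracy instant need \emph{not} be a jump of the Morse index, since an increasing and a decreasing branch $\sigma_{i,j}$ may cross zero simultaneously and cancel. What makes the argument go through is precisely that such cancellation cannot persist arbitrarily close to $0$ or to $+\infty$: the divergence $n_\lambda\to+\infty$ at both ends, obtained by inspecting only the $\rho^{(1)}_1=0$ (resp.\ $\rho^{(0)}_1=0$) slice of the spectrum, forces infinitely many honest jumps of $n_\lambda$ near each end, which is exactly what Theorem~\ref{thm:mainbif} needs.
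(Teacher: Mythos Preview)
Your proof is correct and follows essentially the same strategy as the paper: use Corollary~\ref{thm:cordeginstants} for the discreteness of degeneracy instants, show that the Morse index $n_\lambda$ must jump infinitely often near each end, and invoke Theorem~\ref{thm:mainbif} on suitable subintervals. The paper phrases the key step slightly differently---it observes via Lemma~\ref{thm:zerossigmaij} that zeros of the \emph{increasing} $\sigma_{i,j}$ accumulate only at $0$ and zeros of the \emph{decreasing} ones only at $+\infty$, so that at all but finitely many degeneracy instants there is a genuine jump---whereas you compute directly that $n_\lambda\to+\infty$ at both ends by counting negative eigenvalues in the $\rho^{(1)}_1=0$ and $\rho^{(0)}_1=0$ slices; you then also record the paper's argument in your parenthetical remark, so the two proofs are really the same.
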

\begin{proof}
By Corollary~\ref{thm:cordeginstants}, there are two sequences of instants $\lambda$ at which the Jacobi operator
$\mathcal J_\lambda$ is singular; these instants are our candidates to be bifurcation instants. In principle
one cannot guarantee that at each of these instants there is a jump in the dimension of the negative eigenspace
of $\mathcal J_\lambda$; namely, the eigenvalues $\sigma_{i,j}(\lambda)$ described in Lemma~\ref{thm:zerossigmaij} can be
either increasing or decreasing. Nevertheless, the zeroes of those eigenvalues that are increasing functions
accumulate (only) at zero, while the zeroes of those eigenvalues that are decreasing functions accumulate (only)
at $+\infty$. This implies that at all but a finite number of degeneracy instants there is jump of dimension in the
negative eigenspace of $\mathcal J_\lambda$. The conclusion follows then from Theorem~\ref{thm:mainbif}.
\end{proof}
Note that the case of degenerate pairs cannot be treated with Theorem~\ref{thm:mainbif}, because $\mathcal J_\lambda$ is degenerate
for all $\lambda$, and thus assumptions (a) and (b) are never satisfied in this case.
\smallskip

Theorem~\ref{thm:Yamabebif} leaves an open question on whether there may be some degeneracy instants for
the Jacobi operator $\mathcal J_\lambda$ at which bifurcation \emph{does not} occur. In principle, this
situation might occur at those instants $\lambda$ at which two or more eigenvalue functions $\sigma_{i,j}$
vanish, compensating the positive and the negative contributions to the dimension of the negative eigenspace.
Let us call \emph{neutral} a degeneracy instant of this type.
It is quite intuitive that existence of neutral degeneracy instants should not occur generically,
although a formal proof of this fact might be quite awkward.

There is an interesting case in which one can establish bifurcation also at neutral degeneracy instants,
using the equivariant result of Theorem~\ref{thm:mnainbifequiv}. This case is studied in the sequel.
Let us give the following definition:
\begin{defin}\label{thm:defnontrivialgroupaction}
Two representations $\pi_i$, $i=1,2$ of a group $G$ are said to be \emph{essentially equivalent} if
one of the two is equivalent to the direct sum of the other with a number of copies of the trivial
representation of $G$.
Let $G$ be a group acting by isometries on a Riemannian manifold $(N,\mathbf h)$.
The action will be called \emph{harmonically free} if, given an arbitrary family $V_1,\ldots,V_r,V'_1,\ldots,V'_s$
of pairwise distinct eigenspaces of the Laplacian $\Delta_\mathbf h$, then the corresponding representations
of $G$ on the direct sums $V=\bigoplus_{i=1}^rV_i$ and $V'=\bigoplus_{j=1}^sV'_j$ are not essentially equivalent.
\end{defin}
For instance, the natural action of the orthogonal group $\mathrm O(n)$ on the round sphere $\mathds S^{n+1}$ is
harmonically free. Namely, the representation of $\mathrm O(n)$ on each eigenspace of the Laplacian of
$\mathds S^{n+1}$ is irreducible. Moreover, the dimension of the eigenspaces of the Laplacian of $\mathds S^{n+1}$ form
a strictly increasing sequence, from which it follows that the representations of $\mathrm O(n)$ on the
eigenspaces of the Laplacian of $\mathds S^{n+1}$ are pairwise non equivalent. This in particular implies that
direct sum of any two distinct families of eigenspaces of the Laplacian are never essentially equivalent.

\begin{example}\label{exa:symrank1}
More generally, the action of the isometry group of a compact manifold is harmonically free when the
eigenspaces of the Laplacian are irreducible and pairwise non equivalent. An important class of examples
of this situation (see \cite[Ch.~III, \S~C]{BerGauMaz}) is given by the compact symmetric spaces of rank one,
which consists of the following homogeneous spaces $G/H$ with a $G$-invariant metric:
\begin{itemize}
\item the real projective spaces $\mathds RP^k$, with $G=\mathrm O(k+1)$ and $H=\mathrm O(k)\times\{-1,1\}$;
\smallskip

\item the complex projective spaces $\mathds CP^k$, with $G=\mathrm U(k+1)$ and $H=\mathrm U(k)\times\mathrm U(1)$;
\smallskip

\item the quaternionic projective spaces $\mathds HP^k$, with $G=\mathrm{Sp}(k+1)$ and $H=\mathrm{Sp}(k)\times\mathrm{Sp}(1)$;
\smallskip

\item the Cayley plane $\mathds P^2(\mathrm{Ca})$, with $G=F_4$ and $H=\mathrm{Spin}(9)$.
\end{itemize}
In these examples, the eigenspaces of the Laplacian are irreducible by the natural action of $G$, see \cite[Proposition~C.I.8]{BerGauMaz},
and the dimension of these eigenspaces form a strictly increasing sequence. In particular, they are pairwise non equivalent.
Observe also that all these examples have constant scalar curvature, by homogeneity.
In fact, all these examples are \emph{two point homogeneous}, which implies that they are Einstein.
\end{example}

\begin{prop}\label{thm:bifurcationneutral}
Under the hypothesis of Theorem~\ref{thm:Yamabebif}, assume in addition that there exists a nice Lie group
$G$ with an isometric and harmonically free action on either $\big(M_0,\mathbf g^{(0)}\big)$ or on $\big(M_1,\mathbf g^{(1)}\big)$.
Then, every degeneracy instant for the Jacobi operator $\mathcal J_\lambda$ is a bifurcation instant for
the family $(\mathbf g_\lambda)_\lambda$.
\end{prop}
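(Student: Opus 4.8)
The plan is to apply the equivariant bifurcation theorem (Theorem~\ref{thm:mnainbifequiv}) around an arbitrary degeneracy instant $\lambda_*$, using the harmonically free action of $G$ to rule out the ``compensation'' that the non-equivariant Theorem~\ref{thm:mainbif} cannot detect. Say, without loss of generality (using the symmetry noted at the beginning of this section), that $G$ acts on $\big(M_0,\mathbf g^{(0)}\big)$; extend the action to $M=M_0\times M_1$ by letting $G$ act trivially on the second factor. Then $G$ preserves every metric $\mathbf g_\lambda=\mathbf g^{(0)}\oplus\lambda\,\mathbf g^{(1)}$, so the equivariant setup applies. The eigenspace $V_{\lambda,\rho}$ of $\Delta_{\mathbf g_\lambda}$ with eigenvalue $\rho$ decomposes as a sum of tensor products $V^{(0)}_i\otimes V^{(1)}_j$ over the pairs $(i,j)$ with $\rho^{(0)}_i+\tfrac1\lambda\rho^{(1)}_j=\rho$, where $V^{(0)}_i$, $V^{(1)}_j$ are the eigenspaces on the factors; since $G$ acts trivially on $M_1$, the $G$-representation on $V^{(0)}_i\otimes V^{(1)}_j$ is isomorphic to $\mu^{(1)}_j$ copies of the representation $\pi^{(0)}_i$ of $G$ on $V^{(0)}_i$.

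Next I would choose $a<\lambda_*<b$ close enough to $\lambda_*$ that $\lambda_*$ is the only degeneracy instant in $[a,b]$; this is possible by Corollary~\ref{thm:cordeginstants}. Then $\mathcal J_a$ and $\mathcal J_b$ are isomorphisms, so assumptions (i) and (ii) of Theorem~\ref{thm:mnainbifequiv} hold, and it remains to show $\pi^-_a$ and $\pi^-_b$ are not equivalent. By the monotonicity in Lemma~\ref{thm:zerossigmaij}, the negative eigenspace $V^-_\lambda$ changes across $\lambda_*$ exactly by the eigenspaces $V_{\lambda_*,\rho}$ of those $\sigma_{i,j}$ vanishing at $\lambda_*$: the increasing ones contribute spaces that are in $V^-_b$ but not $V^-_a$, and the decreasing ones contribute spaces that are in $V^-_a$ but not $V^-_b$. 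Concretely, writing $V^-_a$ and $V^-_b$ as direct sums of $G$-modules, their ``common part'' (the eigenspaces of $\mathcal J_\lambda$ that stay negative throughout $[a,b]$) cancels, and one is reduced to comparing two $G$-modules built out of the eigenspaces at $\lambda_*$ that switch sign. Each such module is, by the previous paragraph, a direct sum of modules of the form (several copies of) $\pi^{(0)}_i$; crucially, the index $i$ ranges over eigenvalue-indices of $\Delta_{\mathbf g^{(0)}}$, and the two sides involve \emph{distinct} such indices — for on one side $i>i_*$ (the increasing $\sigma_{i,j}$) and on the other $i<i_*$ (the decreasing $\sigma_{i,j}$), by Lemma~\ref{thm:zerossigmaij}(b)–(d), with the boundary cases $i=i_*$ handled separately by part (c). Since $\sigma_{i_*,j}$ is increasing when $\rho^{(0)}_{i_*}>\tfrac{\kappa^{(0)}}{m-1}$, any such contribution lands on the ``$b$'' side, so the decreasing side only sees indices $i<i_*$ and the increasing side only $i\ge i_*$; in either case the sets of factor-indices appearing on the two sides are disjoint.

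The key point is then the harmonic freeness of the $G$-action on $M_0$: it says precisely that if $\{V^{(0)}_i:i\in S\}$ and $\{V^{(0)}_i:i\in S'\}$ are two families of distinct eigenspaces with $S\cap S'=\emptyset$, then the $G$-representations on $\bigoplus_{i\in S}V^{(0)}_i$ and $\bigoplus_{i\in S'}V^{(0)}_i$ are not essentially equivalent — i.e., not equivalent even after adding copies of the trivial representation. In our situation the ``extra copies'' coming from the multiplicities $\mu^{(1)}_j$ on $M_1$ only multiply each $\pi^{(0)}_i$, and the ``common'' negative eigenspaces that cancel are themselves direct sums of such modules, so after bookkeeping the non-equivalence of $\pi^-_a$ and $\pi^-_b$ reduces exactly to an instance of essential non-equivalence of two disjoint families of eigenspace-modules on $M_0$. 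One subtlety to dispatch: at a neutral instant, a single eigenvalue $\rho$ of $\Delta_{\mathbf g_{\lambda_*}}$ may be hit by several pairs $(i,j)$ simultaneously, some with $\sigma_{i,j}$ increasing and some decreasing; one must check that the increasing contributions (going to the $b$ side, with factor-indices $i\ge i_*$) and the decreasing ones (going to the $a$ side, with $i<i_*$) never involve a common $M_0$-index, which is exactly what Lemma~\ref{thm:zerossigmaij}(b) guarantees. I expect the main obstacle to be precisely this careful bookkeeping — matching up the cancellation of the unchanged negative directions with the $G$-module structure, and verifying that ``essentially equivalent'' (allowing trivial summands) is the right notion because $G$ acts trivially on $M_1$ so that trivial $G$-summands can appear — rather than anything deep; once the combinatorics is organized, the conclusion is immediate from Theorem~\ref{thm:mnainbifequiv} and Definition~\ref{thm:defnontrivialgroupaction}.
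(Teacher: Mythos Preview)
Your proposal is correct and follows essentially the same route as the paper: extend the $G$-action to the product, isolate the degeneracy instant, split the negative eigenspace into a stable part $\mathcal H_0$ plus the pieces that switch sign, and invoke harmonic freeness together with Theorem~\ref{thm:mnainbifequiv}. The only notable difference is that the paper obtains pairwise distinctness of the $M_0$-indices $i$ appearing at $\lambda_*$ directly from Remark~\ref{thm:rematmost} (if $\sigma_{i,j}(\overline\lambda)=\sigma_{i',j'}(\overline\lambda)=0$ with $(i,j)\ne(i',j')$ then $i\ne i'$), which is a bit slicker than your dichotomy via Lemma~\ref{thm:zerossigmaij}; conversely, your description of the eigenspaces as tensor products $V^{(0)}_i\otimes V^{(1)}_j$ (hence $\mu^{(1)}_j$ copies of $\pi^{(0)}_i$) is more accurate than the paper's ``direct sum'' phrasing.
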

\begin{proof}
We can assume that $G$ acts on $\big(M_0,\mathbf g^{(0)}\big)$. For all $\lambda\in\left]0,+\infty\right[$, one obtains
a non trivial isometric action of $G$ on $(M,\mathbf g_\lambda)$ by setting $g\cdot(x_0,x_1)=(g\cdot x_0,x_1)$, $g\in G$,
$x_0\in M_0$ and $x_1\in M_1$. Let $\overline\lambda$ be a neutral degeneracy instants for the family $\mathbf g_\lambda$, and
let $\sigma_{i,j}$ be one of the eigenvalue functions that vanish at $\overline\lambda$. For all $\lambda$, the eigenspace
of $\sigma_{i,j}(\lambda)$ is the direct sum of the $i$-th eigenspace $V_i$ of $\Delta_{\mathbf g^{(0)}}$ and the $j$-th eigenspaces
$W_j$ of $\Delta_{\mathbf g^{(1)}}$. There is a representation of $G$ on this direct sum, given by the direct sum of the natural representation
of $G$ on the eigenspace $V_i$ of $\Delta_{\mathbf g^{(0)}}$ and the trivial representation of $G$ on $W_j$.
As $\lambda$ increases and crosses $\overline\lambda$, the space $V_i\oplus W_j$ is added or removed from the negative eigenspace of $\mathcal J_\lambda$,
according to whether $\sigma_{i,j}$ is decreasing or increasing.

Denote by $\mathcal H_0$ the direct sum of eigenspaces of those eigenvalues $\sigma_{i,j}$ that are negative on the interval
$[\overline\lambda-\varepsilon,\overline\lambda+\varepsilon]$.
Then, for $\varepsilon>0$ small enough,
the negative eigenspace of $\mathcal J_{\overline\lambda-\varepsilon}$ is a direct sum of the form:
\[\mathcal H_0\oplus\bigoplus_{k=1}^rV_{i_k}\oplus W_{j_k},\] and the negative eigenspace of $\mathcal J_{\overline\lambda+\varepsilon}$ is the direct sum
\[\mathcal H_0\oplus\bigoplus_{l=r+1}^{r+s}V_{i_l}\oplus W_{j_l},\]
where the family $V_{i_1},\ldots,V_{i_r},V_{i_{r+1}},\ldots,V_{i_s}$ consists of pairwise distinct eigen\-spaces
of $\Delta_{\mathbf g^{(0)}}$.
This follows from the fact that if $(i,j)\ne(i',j')$ and $\sigma_{i,j}(\overline\lambda)=\sigma_{i',j'}(\overline\lambda)=0$, then necessarily $i\ne i'$ and $j\ne j'$, see Remark~\ref{thm:rematmost}.
The representation $\pi^-_{\overline\lambda-\varepsilon}$ is
the direct sum of the representations of $G$ on $\mathcal H_0$, on $V=\bigoplus_{k=1}^rV_{i_k}$, plus a number of copies of the trivial representation of $G$,
while he representation $\pi^-_{\overline\lambda+\varepsilon}$ is the direct sum of the representations of $G$
on $\mathcal H_0$, on $V'=\bigoplus_{l=r+1}^{r+s}V_{i_l}$ plus a number of copies of the trivial representation of $G$. Hence, $\pi^-_{\overline\lambda-\varepsilon}$
and $\pi^-_{\overline\lambda+\varepsilon}$ are not equivalent, because the action of $G$ on $\big(M_0,\mathbf g^{(0)}\big)$ is
harmonically free. The result follows then from Theorem~\ref{thm:mnainbifequiv}.
\end{proof}

\begin{cor}\label{thm:prodsphere}
Let $(M_1,\mathbf g^{(1)})$ be a compact symmetric space of rank $1$.
Given any compact Riemannian manifold $(M_0,\mathbf g^{(0)})$ with positive constant scalar curvature, then
the family $\mathbf g_\lambda=\mathbf g^{(0)}\oplus\lambda\,\mathbf g^{(1)}$ on $M_0\times M_1$
has a countable number of degeneracy instants that accumulate at $0$ and at $+\infty$. There is bifurcation at every degeneracy instant.
\end{cor}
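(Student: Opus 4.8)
The plan is to obtain this corollary by combining three results already established: Remark~\ref{thm:remdegeneracyEinstein}, Corollary~\ref{thm:cordeginstants}, and Proposition~\ref{thm:bifurcationneutral}. First I would check that the present situation falls under Theorem~\ref{thm:Yamabebif}. A compact symmetric space of rank one is two-point homogeneous, hence Einstein, and it is positively curved, so $\kappa^{(1)}>0$; together with the standing assumption $\kappa^{(0)}>0$, both factors carry positive constant scalar curvature. Since in addition $\mathbf g^{(1)}$ is Einstein with positive scalar curvature, Remark~\ref{thm:remdegeneracyEinstein} shows that the pair $\bigl(\mathbf g^{(0)},\mathbf g^{(1)}\bigr)$ is nondegenerate.

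With nondegeneracy and $\kappa^{(0)},\kappa^{(1)}>0$ in hand, the first assertion is exactly Corollary~\ref{thm:cordeginstants}: the degeneracy instants of $\mathcal J_\lambda$ in $\left]0,+\infty\right[$ form a countable discrete set, made up of a strictly increasing unbounded sequence (the zeros of the decreasing eigenvalue functions $\sigma_{i,j}$, $i<i_*$, $j>j_*$) together with a strictly decreasing sequence converging to $0$ (the zeros of the increasing ones, $i>i_*$, $j<j_*$); in particular these instants accumulate at $0$ and at $+\infty$.

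For the bifurcation statement I would exhibit a \emph{nice} Lie group acting isometrically and harmonically freely on $\bigl(M_1,\mathbf g^{(1)}\bigr)$. When $M_1$ is $\mathds RP^k$, $\mathds CP^k$, $\mathds HP^k$ or the Cayley plane, I take for $G$ the isometry group attached to it in Example~\ref{exa:symrank1}, which by that example acts harmonically freely; when $M_1$ is a round sphere $\mathds S^{n+1}$, I take $G=\mathrm O(n)$ as in the paragraph preceding Example~\ref{exa:symrank1}. In each case $G$ is nice in the sense of \cite{SmoWas}, being either connected ($\mathrm U(k+1)$, $\mathrm{Sp}(k+1)$, $F_4$) or such that $G/G_0\cong\mathds Z_2$ ($\mathrm O(k+1)$, $\mathrm O(n)$). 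The hypotheses of Theorem~\ref{thm:Yamabebif} having been verified in the first step, Proposition~\ref{thm:bifurcationneutral} then applies and yields bifurcation of $(\mathbf g_\lambda)_\lambda$ at every degeneracy instant of $\mathcal J_\lambda$.

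The proof is essentially a matter of assembling previous results; the points that require a little care — and the main (minor) obstacle — are verifying that the relevant isometry groups satisfy the ``niceness'' condition of \cite{SmoWas}, and confirming that the degeneracy instants genuinely accumulate at \emph{both} endpoints, which hinges on the positivity of \emph{both} $\kappa^{(0)}$ and $\kappa^{(1)}$ so that the increasing and the decreasing eigenvalue functions each supply infinitely many zeros.
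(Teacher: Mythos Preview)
Your proposal is correct and follows essentially the same route as the paper: verify nondegeneracy via the Einstein property of rank-one symmetric spaces (Remark~\ref{thm:remdegeneracyEinstein}), invoke Corollary~\ref{thm:cordeginstants} for the structure of the degeneracy set, exhibit a nice Lie group acting harmonically freely on $M_1$ using Example~\ref{exa:symrank1} and the preceding paragraph, and then conclude by Proposition~\ref{thm:bifurcationneutral}. The only cosmetic difference is that the paper phrases the group choice uniformly by writing $M_1=G/H$ and taking the left $G$-action, whereas you separate out the sphere case explicitly; the content is the same.
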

\begin{proof}
Set $m_0=\mathrm{dim}(M_0)\ge2$, write $M_1=G/H$, and consider the isometric action of $G$ by left multiplication.
Since compact symmetric spaces of rank $1$ are Einstein and have positive scalar curvature,
then the pair $\big(\mathbf g^{(0)},\mathbf g^{(1)}\big)$ is nondegenerate, see Remark~\ref{thm:remdegeneracyEinstein}.
Finally, observe that all the groups $G$, except for $G=\mathrm O(k+1)$, that appear in Example~\ref{exa:symrank1}, are connected, hence they are nice.
Also the orthogonal group $\mathrm O(k+1)$ is nice, as $\mathrm O(k+1)/\mathrm{SO}(k+1)\cong\mathds Z_2$.
The result now follows from Corollary~\ref{thm:cordeginstants} and Proposition~\ref{thm:bifurcationneutral}, keeping in mind
that the action of  $G$ on $M$ is harmonically free, see Example~\ref{exa:symrank1}.
\end{proof}
\subsection{Product of spheres}\label{sub:productofspheres}
Consider the case when $M$ is the product of two spheres $\mathds S^{n}\times \mathds S^{n}$ of same dimension $n$,
endowed with the metric $\mathbf g_\lambda= \mathbf g\oplus \lambda\, \mathbf g$, where $\mathbf g$ is the standard round metric on $\mathds S^{n}$.
Since $\mathbf g_\lambda$ and $\mathbf g_{\frac{1}{\lambda}}$ belong to the same conformal class, it suffices
to consider the case $\lambda\in\left]0,1\right]$.

The $j$-th eigenvalue of $\Delta_{\mathbf g}$ is $\rho_j=j(j+n-1)$, which gives
\[\sigma_{i,j}(\lambda)=\frac{1}{\lambda}\left[j(j+n-1)-\frac{n(n-1)}{2n-1}\right]+i(i+n-1)-\frac{n(n-1)}{2n-1};\]
by Corollary~\ref{thm:prodsphere}, every zero of $\sigma_{ij}$ is a bifurcation instant.
One computes easily that $\sigma_{i,j}$ has a zero in the interval $\left]0,1\right]$ only if $j=0$;
the zero of $\sigma_{i,0}$
in $\left]0,1\right]$ is given by:
\[\lambda_i(n)=\frac{n(n-1)}{i(i+n-1)(2n-1)-n(n-1)}, \quad i>0;\]
this forms a strictly decreasing sequence tending to $0$ as $i\to+\infty$, and its maximum is $\lambda_1(n)=\frac{n-1}n$.
By Proposition~\ref{thm:localrigidity}, the family $\mathbf g_\lambda$ is locally rigid in the interval $\left]\frac{n-1}n,\frac n{n-1}\right[$.

Since for $\lambda=1$ the metric $\mathbf g_\lambda$ on $\mathds S^n\times\mathds S^n$ is Einstein, we know that $\mathbf g_1$ is the unique metric in its conformal class
with given volume and constant scalar curvature. It is an interesting open question if the same is true for the metric $\mathbf g_\lambda$,
for $\lambda\in\left]\frac{n-1}n,\frac n{n-1}\right[$. Our local rigidity result gives a partial answer to this question,
in that it excludes the
existence of other constant scalar curvature metrics with given volume \emph{near} $\mathbf g_\lambda$ for
$\lambda\in\left]\frac{n-1}n,\frac n{n-1}\right[$.
This result can be improved as follows:
\begin{prop}\label{thm:uniquenessprodspheres}
Consider the product manifold $M=\mathds S^n\times\mathds S^n$ endowed with the metric $\mathbf g_\lambda=\mathbf g\oplus\lambda\cdot\mathbf g$, where
$\mathbf g$ is the round metric on $\mathds S^n$.
Consider the set:
\begin{align}
\mathcal A=\Big\{\lambda\in\left]\tfrac{n-1}n,\tfrac n{n-1}\right[:\ \ &\text{the conformal class of $\mathbf g_\lambda$ contains only one metric}
\\
&\text{ with constant scalar curvature and volume $v_\lambda$}\Big\};
\end{align}
Then, $\mathcal A$ is an open subset of $\left]\tfrac{n-1}n,\tfrac n{n-1}\right[$ containing $1$.

If $\overline\lambda$ is an accumulation point of $\mathcal A$,
then every constant curvature metric in the conformal class of $\mathbf g_{\overline\lambda}$ which is not
homothetic to $\mathbf g_{\overline\lambda}$ is \emph{degenerate}\footnote{%
i.e., a degenerate critical point of the Hilbert--Einstein functional $\mathcal A$ in $\mathcal M_1^{2,\alpha}(M,\mathbf g_{\overline\lambda})$,
see item~(f) in Proposition~\ref{thm:summainfactscritpt}.}.
\end{prop}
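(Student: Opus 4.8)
I plan to prove the three assertions — $1\in\mathcal A$, openness of $\mathcal A$, and the statement on accumulation points — in this order. Throughout write $I=\left]\tfrac{n-1}n,\tfrac n{n-1}\right[$, and recall from the discussion preceding the statement that no function $\sigma_{i,j}$ vanishes on $I$, so $\mathcal J_\lambda$ is an isomorphism and the family $(\mathbf g_\lambda)_\lambda$ is locally rigid at every $\lambda\in I$ (Proposition~\ref{thm:localrigidity}). That $1\in\mathcal A$ is immediate: $\mathbf g_1=\mathbf g\oplus\mathbf g$ is an Einstein metric on $\mathds S^n\times\mathds S^n$ which is not the round metric on a sphere, so by Obata's theorem \cite{Oba72} it is the only metric of its volume having constant scalar curvature in its conformal class.

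To show $\mathcal A$ is open I would prove that $I\setminus\mathcal A$ is closed in $I$. Let $\lambda_k\in I\setminus\mathcal A$ with $\lambda_k\to\lambda_0\in I$. Since $\mathbf g_{\lambda_k}$ is itself a constant scalar curvature metric of volume $v_{\lambda_k}$ in $[\mathbf g_{\lambda_k}]$, there is a distinct second one, $\mathbf h_k\neq\mathbf g_{\lambda_k}$. By the recent compactness results for the Yamabe problem (\cite{KhuMarSch,LiZhang,Mar}) — which apply here because $\mathds S^n\times\mathds S^n$ has positive Yamabe invariant (it carries $\mathbf g_\lambda$, of positive scalar curvature) and is not conformally the round sphere, and because the relevant $\mathcal C^{2,\alpha}$ a priori bounds are uniform as the conformal class ranges over the compact family $\{[\mathbf g_\lambda]:|\lambda-\lambda_0|\le\delta\}$ (the volumes $v_{\lambda_k}$ staying bounded) — a subsequence of $(\mathbf h_k)$ converges in $\mathcal C^{2,\alpha}$ to a metric $\mathbf h_\infty$ conformal to $\mathbf g_{\lambda_0}$, of volume $v_{\lambda_0}$, with constant scalar curvature. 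If $\mathbf h_\infty\neq\mathbf g_{\lambda_0}$, then $[\mathbf g_{\lambda_0}]$ contains two such metrics and $\lambda_0\in I\setminus\mathcal A$, as wanted. If instead $\mathbf h_\infty=\mathbf g_{\lambda_0}$, then the sequences $(\lambda_k)$ and $(\mathbf h_k)$ satisfy properties (a)--(d) of the definition of a bifurcation instant, so $\lambda_0$ would be a bifurcation instant, contradicting local rigidity at $\lambda_0\in I$; hence this case cannot occur, and $I\setminus\mathcal A$ is closed, so $\mathcal A$ is open.

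For the last assertion I argue by contraposition. Let $\overline\lambda$ be an accumulation point of $\mathcal A$ and let $\mathbf h$ be a constant scalar curvature metric conformal to $\mathbf g_{\overline\lambda}$, of volume $v_{\overline\lambda}$, not homothetic to $\mathbf g_{\overline\lambda}$; suppose, for contradiction, that $\mathbf h$ is a nondegenerate critical point of the Hilbert--Einstein functional in its conformal class (in the sense of the footnote to the statement). Normalize volumes to $1$ and write $\mathbf h=\psi_{\overline\lambda}\cdot\mathbf g_{\overline\lambda}$, so $(\psi_{\overline\lambda},\overline\lambda)\in\mathcal D$ (see \eqref{eq:defD}) and $F(\psi_{\overline\lambda},\overline\lambda)=(0,\overline\lambda)$ for $F$ as in \eqref{eq:defF}; by Proposition~\ref{thm:summainfactscritpt}(f), nondegeneracy of $\mathbf h$ says exactly that the vertical derivative $\mathrm d_{\mathrm{ver}}F(\psi_{\overline\lambda},\overline\lambda)$ — which, as in the proof of Proposition~\ref{thm:localrigidity}, is the Jacobi operator $(m-1)\Delta_{\mathbf h}-\kappa_{\mathbf h}$ of $\mathbf h$ restricted to $\mathbf h$-mean-zero functions — is an isomorphism. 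The fiber-bundle implicit function theorem (Proposition~\ref{thm:fiberIFT}), applied on a small closed interval $[a,b]$ with $\overline\lambda\in\left]a,b\right[$, then produces a $\mathcal C^1$ branch $\lambda\mapsto\psi_\lambda$ with $F(\psi_\lambda,\lambda)=(0,\lambda)$ and $\psi_\lambda|_{\lambda=\overline\lambda}=\psi_{\overline\lambda}$, i.e.\ a path of unit-volume constant scalar curvature metrics $\mathbf h_\lambda=\psi_\lambda\cdot\mathbf g_\lambda$ conformal to $\mathbf g_\lambda$. Since $\psi_{\overline\lambda}$ is non-constant ($\mathbf h$ is not homothetic to $\mathbf g_{\overline\lambda}$) and $\psi_\lambda\to\psi_{\overline\lambda}$ in $\mathcal C^{2,\alpha}\hookrightarrow\mathcal C^0$, the function $\psi_\lambda$ stays non-constant — in particular $\mathbf h_\lambda\neq\mathbf g_\lambda$ — for $\lambda$ in a neighborhood $U$ of $\overline\lambda$. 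Thus for every $\lambda\in U\cap I$ the conformal class $[\mathbf g_\lambda]$ contains the two distinct constant scalar curvature metrics $\mathbf g_\lambda$ and $\mathbf h_\lambda$ of volume $v_\lambda$, so $U\cap I\subset I\setminus\mathcal A$. But $\overline\lambda$ is an accumulation point of $\mathcal A\subset I$, so $\mathcal A$ meets the neighborhood $U$, hence meets $U\cap I$ — a contradiction.

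The delicate step is the compactness invoked in the second part: it requires the sharp a priori estimates for the Yamabe problem, and moreover in a form uniform as the conformal class runs over the compact one-parameter family $\{[\mathbf g_\lambda]\}$. This uniformity should follow from the essentially local nature of those estimates and their continuous dependence on the background metric, but it is the point that needs the most care; the local-rigidity and implicit-function-theorem ingredients are, by contrast, supplied directly by the earlier sections.
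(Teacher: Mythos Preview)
Your proposal is correct and follows essentially the same approach as the paper: Obata for $1\in\mathcal A$, then compactness of Yamabe solutions plus local rigidity on $I$ for openness, then the implicit function theorem at a hypothetical nondegenerate $\mathbf h$ for the accumulation-point claim. The only cosmetic difference is that you phrase openness as ``$I\setminus\mathcal A$ is closed in $I$'' and split into the two cases $\mathbf h_\infty\neq\mathbf g_{\lambda_0}$ and $\mathbf h_\infty=\mathbf g_{\lambda_0}$, whereas the paper argues directly that a sequence $\lambda_k\notin\mathcal A$ with $\lambda_k\to\lambda_*\in\mathcal A$ leads to a contradiction; the ingredients and logic are identical. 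Your caveat about the compactness step is well placed: the paper packages this as a separate proposition (verifying that the Weyl tensor of $\mathbf g_\lambda$ on $\mathds S^n\times\mathds S^n$ is nowhere zero, so the a priori estimates of \cite{KhuMarSch,LiZhang,Mar} apply uniformly over the compact family).
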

\begin{proof}
Clearly $1\in\mathcal A$, as we observed above.
By taking homotheties, we can assume that the volume of each $\mathbf g_\lambda$ is equal to $1$.
Assume $\lambda_*\in\mathcal A$ and, by absurd, that there exists a sequence
$\lambda_k\in\left]\tfrac{n-1}n,\tfrac n{n-1}\right[\setminus\mathcal A$
with $\lim\limits_{k\to\infty}\lambda_k=\lambda_*$. Let $\mathbf g_k$ be a constant scalar curvature metric in the conformal class of $\mathbf g_{\lambda_k}$
and of volume $1$ which is different from $\mathbf g_{\lambda_k}$. By the local rigidity around $\lambda_*$, for $k$ large $\mathbf g_{\lambda_k}$
cannot enter in some neighborhood of $\mathbf g_{\lambda_*}$.
The set of unit volume constant scalar curvature metrics on $\mathds S^n\times\mathds S^n$
that belong to the conformal class of some $\mathbf g_\lambda$, with $\lambda\in\left[\frac{n-1}n,\frac n{n-1}\right]$ is compact
in the $\mathcal C^2$-topology; this follows easily from \cite{KhuMarSch, LiZhang, Mar},
see Proposition~\ref{thm:compactYamabe} below. Hence, the sequence $\mathbf g_k$ must have a subsequence
converging in the $\mathcal C^2$-topology to a metric $\mathbf g_\infty$ which belongs to the conformal class of $\mathbf g_{\lambda_*}$. By continuity,
$\mathrm{vol}(M,\mathbf g_\infty)=1$
and $\mathbf g_\infty$ has constant scalar curvature. This gives a contradiction, because it must be $\mathbf g_\infty\ne\mathbf g_{\lambda_*}$,
but $\lambda_*\in\mathcal A$. This shows that $\mathcal A$ is open.

Let $\overline\lambda$ be an accumulation point of $\mathcal A$ that does not belong to $\mathcal A$,
and let $\overline{\mathbf g}\ne\mathbf g_{\overline\lambda}$ be a constant scalar curvature metric in the conformal class of $\mathbf g_{\overline\lambda}$
having volume equal to $v_{\overline\lambda}$. If $g_{\overline\lambda}$ were nondegenerate, then by the implicit function theorem (see Proposition~\ref{thm:localrigidity})
one could construct a differentiable path of constant scalar curvature metrics $\lambda\mapsto\mathbf h_\lambda$, $\lambda\in\left]\overline\lambda-\varepsilon,\overline\lambda+\varepsilon\right[$,
with $\mathbf h_{\overline\lambda}=\overline{\mathbf g}$, with $\mathbf h_\lambda\ne\mathbf g_\lambda$ in the conformal class of $\mathbf g_\lambda$
and of volume equal to $v_\lambda$ for all $\lambda$.
This contradicts the fact that for $\lambda\in\mathcal A$ near
$\overline\lambda$, $\mathbf g_\lambda$  is the unique such a metric in its conformal class.
\end{proof}
We have used a compactness result for solutions of the Yamabe problem:
\begin{prop}\label{thm:compactYamabe}
Let $M$ be a compact manifold and let $\mathcal K$ be a
set of smooth Riemannian metrics on $M$ which is compact in the $\mathcal C^{k,\alpha}$-topology with
$k$ \emph{sufficiently large}\footnote{%
Sufficiently large depending only on $\mathrm{dim}(M)$, see \cite[Lemma~10.1, p.\ 172]{KhuMarSch} for details. },
and such that one of the following assumptions is satisfied:
\begin{itemize}
\item[(a)] $\mathrm{dim}(M)\le7$;
\item[(b)] for all $\mathbf g\in\mathcal K$, then the Weyl tensor $W_\mathbf g$ of $\mathbf g$ satisfies \[\big\vert W_\mathbf g(p)\big\vert+\big\vert\nabla W_\mathbf g(p)\big\vert>0\]
at every point $p\in M$;
\item[(c)] $\mathrm{dim}(M)\le24$ and $M$ is spin.
\end{itemize}
Then, the set of unit volume constant scalar curvature metrics that belong to the conformal class of some $\mathbf g\in\mathcal K$ is compact
in the $\mathcal C^2$-topology. In particular, the conclusion holds for the family of metrics
$\mathcal K_n=\big\{\mathbf g_\lambda:\lambda\in\left[\tfrac{n-1}n,\frac n{n-1}\right]\big\}$
in the product $M=\mathds S^n\times\mathds S^n$.
\end{prop}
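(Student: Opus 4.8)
The plan is a direct sequential‑compactness argument. Let $(\mathbf h_j)_j$ be a sequence of unit‑volume constant scalar curvature metrics, with $\mathbf h_j$ in the conformal class of some $\mathbf g_j\in\mathcal K$. Since $\mathcal K$ is compact in $\mathcal C^{k,\alpha}$, after passing to a subsequence we may assume $\mathbf g_j\to\mathbf g_\infty$ in $\mathcal C^{k,\alpha}$ for some $\mathbf g_\infty\in\mathcal K$. Put $m=\mathrm{dim}(M)$ and write $\mathbf h_j=u_j^{4/(m-2)}\mathbf g_j$ with $u_j>0$; then $u_j$ solves $L_{\mathbf g_j}u_j=c_j\,u_j^{(m+2)/(m-2)}$, where $L_{\mathbf g}=\tfrac{4(m-1)}{m-2}\Delta_{\mathbf g}+\kappa_{\mathbf g}$ is the conformal Laplacian, $c_j=\kappa_{\mathbf h_j}\in\mathds R$ is the scalar curvature of $\mathbf h_j$, and $\int_M u_j^{2m/(m-2)}\,\nu_{\mathbf g_j}=1$. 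It suffices to extract a further subsequence with $u_j\to u_\infty$ in $\mathcal C^2$, $u_\infty>0$, $\int_M u_\infty^{2m/(m-2)}\,\nu_{\mathbf g_\infty}=1$, and $c_j\to c_\infty\in\mathds R$: then $\mathbf h_j\to\mathbf h_\infty:=u_\infty^{4/(m-2)}\mathbf g_\infty$ in $\mathcal C^2$, and $\mathbf h_\infty$ is a unit‑volume constant scalar curvature metric in the conformal class of $\mathbf g_\infty\in\mathcal K$, so the set is compact (the limit lies in it).

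First I would dispose of the conformal classes of non‑positive Yamabe constant. Because $\mathbf g_j\to\mathbf g_\infty$ in $\mathcal C^{k,\alpha}$, the Yamabe constants converge, $Y([\mathbf g_j])\to Y([\mathbf g_\infty])$ (continuity of the Rayleigh quotient and uniform Sobolev inequalities). If a subsequence of the $\mathbf g_j$ lies in the closed, hence compact, set $\{\mathbf g\in\mathcal K:Y([\mathbf g])\le0\}$, then along it $\mathbf h_j$ is the \emph{unique} unit‑volume constant scalar curvature metric of its conformal class, which is a nondegenerate critical point of $\mathcal A$ in that class (its scalar curvature being $\le0$; item~(f) of Proposition~\ref{thm:summainfactscritpt}), hence depends continuously on $\mathbf g$ by the implicit function theorem, as in the proof of Proposition~\ref{thm:localrigidity}; thus $\mathbf h_j$ converges in $\mathcal C^2$ to the corresponding metric of $[\mathbf g_\infty]$. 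We may therefore assume $Y([\mathbf g_j])>0$ for all $j$, and then $Y([\mathbf g_\infty])\ge0$.

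The substantive case is $Y([\mathbf g_\infty])>0$. Here I invoke the compactness theorem for solutions of the Yamabe problem of Khuri--Marques--Schoen, Li--Zhang and Marques, in its form giving uniform a priori estimates over a compact family of background metrics: under each of the hypotheses (a), (b), (c) — and, under (b), no $\mathbf g\in\mathcal K$ is conformally flat, in particular none is conformal to a round sphere — there is a constant $C>0$ depending only on the compact set $\{\mathbf g\in\mathcal K:Y([\mathbf g])\ge\tfrac12 Y([\mathbf g_\infty])\}$ (which contains $\mathbf g_\infty$ and all $\mathbf g_j$ for $j$ large) such that every unit‑volume constant scalar curvature metric $\mathbf h=u^{4/(m-2)}\mathbf g$ conformal to such a $\mathbf g$ satisfies $C^{-1}\le u\le C$, $\Vert u\Vert_{\mathcal C^{2,\alpha}}\le C$ and $C^{-1}\le\kappa_{\mathbf h}\le C$. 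Granting this, $(u_j)$ is bounded in $\mathcal C^{2,\alpha}$ and $(c_j)$ is bounded, so a further subsequence has $u_j\to u_\infty$ in $\mathcal C^2$ with $C^{-1}\le u_\infty\le C$ and $c_j\to c_\infty\in[C^{-1},C]$; since the coefficients of $L_{\mathbf g_j}$ converge (as $\mathbf g_j\to\mathbf g_\infty$ in $\mathcal C^{k,\alpha}$), $L_{\mathbf g_\infty}u_\infty=c_\infty u_\infty^{(m+2)/(m-2)}$, and $\int_M u_\infty^{2m/(m-2)}\,\nu_{\mathbf g_\infty}=1$ by uniform convergence, which closes the limiting argument. (The borderline case $Y([\mathbf g_\infty])=0$ with $Y([\mathbf g_j])>0$ does not occur in the application, and in general is absorbed in the uniformity of the estimates of \cite{KhuMarSch}.)

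For the last assertion, the family $\mathcal K_n=\{\mathbf g_\lambda:\lambda\in[\tfrac{n-1}n,\tfrac n{n-1}]\}$ on $M=\mathds S^n\times\mathds S^n$, $n\ge2$, is the image of a compact interval under a smooth family of smooth metrics, hence compact in $\mathcal C^{k,\alpha}$ for every $k$; each $\mathbf g_\lambda$ is homogeneous, so $|W_{\mathbf g_\lambda}|$ is constant on $M$, and it is positive because a Riemannian product of two round spheres of dimension $\ge2$ is not conformally flat, so hypothesis (b) holds (and each $\mathbf g_\lambda$ has positive scalar curvature, hence positive Yamabe constant), and the conclusion applies. The whole weight of the proposition rests on the cited compactness theorem — the blow‑up analysis and the positive mass theorem (cases (a), (c)) or Weyl‑vanishing input (case (b)) behind it; the argument above is routine, the only points requiring care being that those a priori estimates are uniform over the relevant compact subfamily of $\mathcal K$ (as established in \cite{KhuMarSch}) and that no member of $\mathcal K$ is conformal to a round sphere.
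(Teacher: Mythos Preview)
Your argument is correct and follows the same approach as the paper: both rest entirely on the cited compactness results of Khuri--Marques--Schoen, Li--Zhang and Marques, with the verification that $\mathcal K_n$ satisfies hypothesis~(b) because the homogeneous metrics $\mathbf g_\lambda$ on $\mathds S^n\times\mathds S^n$ are never locally conformally flat. The paper's own proof is essentially a bare citation of \cite[Lemma~10.1]{KhuMarSch}, whereas you have usefully unpacked the sequential-compactness mechanism and separated out the non-positive Yamabe constant case; this is extra care rather than a different route.
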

\begin{proof}
The result follows from the arguments in \cite{KhuMarSch, LiZhang, Mar}, see in particular \cite[Lemma10.1]{KhuMarSch}.
For the second statement, observe that the manifolds $(\mathds S^n\times\mathds S^n,\mathbf g_\lambda)$ satisfy assumption (b).
Namely, the Weyl tensor of $\mathbf g_\lambda$ is never vanishing in $\mathds S^n\times\mathds S^n$, since this is a homogeneous metric which is not locally conformally
flat for every $\lambda$. The given set $\mathcal K_n$ is compact in the $\mathcal C^{k,\alpha}$-topology for all $k$.
\end{proof}
In fact, the result of Proposition~\ref{thm:uniquenessprodspheres} extends immediately to the case
of products of arbitrary Einstein manifolds of positive scalar curvature. We need an elementary result first:
\begin{lem}\label{thm:weylproduct}
Let $W^{(0)}$, $W^{(1)}$ and $W$ be the Weyl tensors of $(M_0,\mathbf g^{(0)})$, $( M_1,\mathbf g^{(1)})$ and  $(M_0\times M_1,\mathbf g^{(0)}\oplus\mathbf g^{(1)})$ respectively. Assume that $M_0$ is Einstein at $p$ and $M_1$ is Einstein at $q$. Then $W$ vanishes at a point $(p,q)\in M_0\times M_1$ if and only if the following hold:
\begin{itemize}
\item[(a)] $W^{(0)}(p)=0$, $W^{(1)}(q)=0$,
\item[(b)] $m_1(m_1-1)\kappa^{(0)} + m_0(m_0-1)\kappa^{(1)}=0$, where $m_j=\dim(M_j)\ge2$ and $\kappa^{(j)}$ is the scalar curvature of $M_j$, $j=0,1$.
\end{itemize}
In particular, if both $\kappa^{(0)}$ and $\kappa^{(1)}$ are positive, then (b) is not satisfied and therefore $W(p,q)\ne0$.
\end{lem}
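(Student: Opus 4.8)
The plan is to reduce the lemma to a purely pointwise algebraic identity at $(p,q)$, exploiting the orthogonal splitting $T_{(p,q)}(M_0\times M_1)=V_0\oplus V_1$ with $V_0=T_pM_0$, $V_1=T_qM_1$, together with the classical Weyl--Schouten decomposition of an algebraic curvature tensor. Write $\mathrm{Rm}$, $\mathrm{Rm}^{(0)}$, $\mathrm{Rm}^{(1)}$ for the Riemann tensors of the product metric and of the two factors, let $\odot$ denote the Kulkarni--Nomizu product of symmetric $2$-tensors, and set $m=m_0+m_1\ (\ge 4)$. The first step is to record the structure of the product curvature at $(p,q)$: since the Levi--Civita connection of a Riemannian product is a product connection, $\mathrm{Rm}(X,Y,Z,T)$ is obtained by summing $\mathrm{Rm}^{(0)}$ evaluated on the $V_0$-components and $\mathrm{Rm}^{(1)}$ evaluated on the $V_1$-components of its arguments; in particular $\mathrm{Rm}$ vanishes on every \emph{mixed} $4$-tuple, $\mathrm{Ric}=\mathrm{Ric}^{(0)}\oplus\mathrm{Ric}^{(1)}$, and the scalar curvature of the product is $\kappa=\kappa^{(0)}+\kappa^{(1)}$. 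By the Einstein hypotheses, $\mathrm{Ric}^{(0)}_p=\tfrac{\kappa^{(0)}}{m_0}\mathbf g^{(0)}$ and $\mathrm{Ric}^{(1)}_q=\tfrac{\kappa^{(1)}}{m_1}\mathbf g^{(1)}$.

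Next I would insert these data into the decomposition $\mathrm{Rm}=W+\tfrac1{m-2}\bigl(\mathrm{Ric}-\tfrac{\kappa}{m}\mathbf g\bigr)\odot\mathbf g+\tfrac{\kappa}{2m(m-1)}\,\mathbf g\odot\mathbf g$, applied both to the product and, at the points $p$ and $q$, to each factor; for the factors the trace-free Ricci tensor vanishes by hypothesis, so that $\mathrm{Rm}^{(j)}=W^{(j)}+\tfrac{\kappa^{(j)}}{2m_j(m_j-1)}\,\mathbf g^{(j)}\odot\mathbf g^{(j)}$ at the relevant point (for $m_j=2$ one sets $W^{(j)}:=0$ and this identity still holds). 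Writing $\mathbf g^{(0)},\mathbf g^{(1)}$ also for the pullbacks to $M_0\times M_1$ of the two factor metrics, so that $\mathbf g=\mathbf g^{(0)}+\mathbf g^{(1)}$, and expanding the Kulkarni--Nomizu products, one collects terms into an identity of the shape
\[
W=W^{(0)}+W^{(1)}+c_{00}\,\mathbf g^{(0)}\odot\mathbf g^{(0)}+c_{11}\,\mathbf g^{(1)}\odot\mathbf g^{(1)}+c_{01}\,\mathbf g^{(0)}\odot\mathbf g^{(1)},
\]
with $c_{00},c_{11},c_{01}$ explicit scalars depending only on $m_0,m_1,\kappa^{(0)},\kappa^{(1)}$; a short calculation gives $c_{01}=\tfrac1{m-2}\bigl(\tfrac{\kappa}{m-1}-\tfrac{\kappa^{(0)}}{m_0}-\tfrac{\kappa^{(1)}}{m_1}\bigr)$. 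Clearing denominators, using $m-1-m_0=m_1-1$ and $m-1-m_1=m_0-1$, shows that $c_{01}=0$ is equivalent to condition (b); moreover $c_{00}$ and $c_{11}$ vanish precisely when $c_{01}$ does, either by an analogous computation or by observing that $W-W^{(0)}-W^{(1)}$ is trace-free, which (computing the Ricci contractions of the three Kulkarni--Nomizu products) forces $2(m_0-1)c_{00}+m_1c_{01}=0$ and $2(m_1-1)c_{11}+m_0c_{01}=0$.

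Both implications then follow from the above identity. If $W(p,q)=0$, I would restrict it to arguments in $V_0$: the summands $W^{(1)}$, $\mathbf g^{(1)}\odot\mathbf g^{(1)}$ and $\mathbf g^{(0)}\odot\mathbf g^{(1)}$ all vanish there, leaving $0=W^{(0)}+c_{00}\,\mathbf g^{(0)}\odot\mathbf g^{(0)}$ on $V_0$; since $W^{(0)}$ is trace-free on $V_0$ while the Ricci contraction of $\mathbf g^{(0)}\odot\mathbf g^{(0)}$ equals $2(m_0-1)\mathbf g^{(0)}\ne0$ (this is where $m_0\ge2$ is used), we must have $c_{00}=0$ and hence $W^{(0)}(p)=0$; symmetrically $W^{(1)}(q)=0$, which is (a), and $c_{00}=0$ gives (b). Conversely, if (a) and (b) hold then $W^{(0)}(p)=W^{(1)}(q)=0$ and $c_{00}=c_{11}=c_{01}=0$ by the previous paragraph, so every summand on the right-hand side vanishes and $W(p,q)=0$. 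For the final assertion, if $\kappa^{(0)},\kappa^{(1)}>0$ then $m_1(m_1-1)\kappa^{(0)}+m_0(m_0-1)\kappa^{(1)}$ is strictly positive, since $m_j(m_j-1)\ge2$; hence (b) fails and $W(p,q)\ne0$.

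There is no conceptual obstacle here; the delicate points are purely clerical. One must fix, once and for all, the sign and normalization conventions for the Riemann tensor and for the Kulkarni--Nomizu product and adhere to them throughout; compute correctly the Ricci contractions of $\mathbf g^{(0)}\odot\mathbf g^{(0)}$, $\mathbf g^{(1)}\odot\mathbf g^{(1)}$ and $\mathbf g^{(0)}\odot\mathbf g^{(1)}$ as curvature-type tensors on the product (this is where the pullback bookkeeping and the hypotheses $m_j\ge2$ enter); and handle the low-dimensional cases $m_0=2$ or $m_1=2$, in which the relevant factor Weyl tensor is defined to be zero and the factor decomposition recorded above continues to hold.
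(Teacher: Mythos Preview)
Your argument is correct and follows exactly the approach indicated in the paper, which simply says the result is ``a direct elementary computation using the standard decomposition of a curvature tensor into its irreducible components''; you have carried out that computation in full via the Weyl--Schouten/Kulkarni--Nomizu decomposition. The algebra (in particular the expression for $c_{01}$, its equivalence with condition~(b), and the trace relations linking $c_{00},c_{11}$ to $c_{01}$) checks out, and your handling of the low-dimensional cases $m_j=2$ is appropriate.
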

\begin{proof}
A direct elementary computation using the standard decomposition of a curvature tensor into its irreducible components, see for instance \cite{Besse}.
\end{proof}
A more general result that characterizes conformally flat product manifolds can be found in \cite[Theorem~4]{Yau73}.
\begin{prop}\label{thm:uniquenessproductEinstein}
Let $(M_0^{m_0},\mathbf g^{(0)})$ and $(M_1^{m_1},\mathbf g^{(1)})$ be compact Einstein manifolds of positive scalar curvature
$\kappa^{(0)}$ and $\kappa^{(1)}$ respectively.  Denote by $\mathbf g_\lambda$, $\lambda\in\left]0,+\infty\right[$, the metric
$\mathbf g^{(0)}\oplus\lambda\,\mathbf g^{(1)}$ on the product manifold $M=M_0\times M_1$.
Then, there exists an open subset $\mathcal A$ of $\left]0,+\infty\right[$ containing
$\lambda_*=\frac{m_0\kappa^{(1)}}{m_1\kappa^{(0)}}$ such that for all $\lambda\in\mathcal A$,
$\mathbf g_\lambda$ is the unique constant scalar curvature metric
in its conformal class, up to homotheties.

If $\overline\lambda$ is an accumulation point of $\mathcal A$,
then every constant curvature metric in the conformal class of $\mathbf g_{\overline\lambda}$ which is not homothetic
to $\mathbf g_{\overline\lambda}$ is degenerate.
\end{prop}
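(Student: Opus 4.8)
The plan is to follow the proof of Proposition~\ref{thm:uniquenessprodspheres} almost verbatim, since only two ingredients there were special to $\mathds S^n\times\mathds S^n$: the explicit value of the Einstein instant and the non-vanishing of the Weyl tensor. The first is replaced by $\lambda_*=\frac{m_0\kappa^{(1)}}{m_1\kappa^{(0)}}$, which is precisely the value of $\lambda$ for which $\mathbf g_\lambda=\mathbf g^{(0)}\oplus\lambda\,\mathbf g^{(1)}$ is Einstein: indeed $\mathrm{Ric}_{\mathbf g_\lambda}=\mathrm{Ric}_{\mathbf g^{(0)}}\oplus\mathrm{Ric}_{\mathbf g^{(1)}}=\frac{\kappa^{(0)}}{m_0}\mathbf g^{(0)}\oplus\frac{\kappa^{(1)}}{m_1}\mathbf g^{(1)}$, and this is a multiple of $\mathbf g_\lambda=\mathbf g^{(0)}\oplus\lambda\,\mathbf g^{(1)}$ exactly when $\frac{\kappa^{(0)}}{m_0}=\frac1\lambda\cdot\frac{\kappa^{(1)}}{m_1}$, i.e.\ $\lambda=\lambda_*$. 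Hence $\mathbf g_{\lambda_*}$ is Einstein of positive scalar curvature, so by Obata's theorem (together with the fact that $M_0\times M_1$ is not a round sphere) it is the unique unit-volume constant scalar curvature metric in its conformal class; thus $\lambda_*\in\mathcal A$, and in particular $\mathcal A$ is nonempty.

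Next I would establish local rigidity of the family $(\mathbf g_\lambda)_\lambda$ on an open interval around $\lambda_*$. Since $\big(\mathbf g^{(0)},\mathbf g^{(1)}\big)$ is nondegenerate (both curvatures are positive, see Remark~\ref{thm:remdegeneracyEinstein}), Corollary~\ref{thm:cordeginstants} gives that the degeneracy instants form a discrete set accumulating only at $0$ and $+\infty$; moreover $\mathbf g_{\lambda_*}$, being Einstein and not a round sphere, is a nondegenerate critical point of $\mathcal A$ in its conformal class by Corollary~\ref{thm:corrigEinst}. So there is $\varepsilon_0>0$ with no degeneracy instant in $]\lambda_*-\varepsilon_0,\lambda_*+\varepsilon_0[$, and Proposition~\ref{thm:localrigidity} yields local rigidity there. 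Then I would invoke the compactness machinery: by Lemma~\ref{thm:weylproduct}, since $\kappa^{(0)},\kappa^{(1)}>0$, condition (b) of that lemma fails, so the Weyl tensor $W_{\mathbf g_\lambda}$ of the product metric is nowhere vanishing for every $\lambda$ in any compact subinterval $[\alpha,\beta]\subset]0,+\infty[$; hence the hypothesis (b) of Proposition~\ref{thm:compactYamabe} holds for $\mathcal K=\{\mathbf g_\lambda:\lambda\in[\alpha,\beta]\}$ (which is compact in every $\mathcal C^{k,\alpha}$-topology), and the set of unit-volume constant scalar curvature metrics in the conformal classes of the $\mathbf g_\lambda$, $\lambda\in[\alpha,\beta]$, is $\mathcal C^2$-compact.

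With these two tools in hand, openness of $\mathcal A$ is exactly the argument of Proposition~\ref{thm:uniquenessprodspheres}: normalize to unit volume; if $\lambda_*\in\mathcal A$ but there were $\lambda_k\to\lambda_*$ with $\lambda_k\notin\mathcal A$, pick $\mathbf g_k\ne\mathbf g_{\lambda_k}$ of unit volume and constant scalar curvature in the conformal class of $\mathbf g_{\lambda_k}$; local rigidity near $\lambda_*$ forces $\mathbf g_k$ to stay outside a fixed $\mathcal C^2$-neighborhood of $\mathbf g_{\lambda_*}$, while $\mathcal C^2$-compactness produces a subsequential limit $\mathbf g_\infty\ne\mathbf g_{\lambda_*}$ of unit volume and constant scalar curvature in the conformal class of $\mathbf g_{\lambda_*}$, contradicting $\lambda_*\in\mathcal A$. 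Finally, for the accumulation statement, let $\overline\lambda$ be an accumulation point of $\mathcal A$ with $\overline\lambda\notin\mathcal A$ and let $\overline{\mathbf g}\ne\mathbf g_{\overline\lambda}$ be a unit-volume constant scalar curvature metric in the conformal class of $\mathbf g_{\overline\lambda}$; if $\overline{\mathbf g}$ were nondegenerate, the implicit function theorem (the fiberwise version underlying Proposition~\ref{thm:localrigidity}) would produce, for $\lambda$ near $\overline\lambda$, unit-volume constant scalar curvature metrics $\mathbf h_\lambda\ne\mathbf g_\lambda$ in the conformal class of $\mathbf g_\lambda$, contradicting that $\mathbf g_\lambda$ is the unique such metric for $\lambda\in\mathcal A$ near $\overline\lambda$. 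The only genuinely new point compared to the sphere case is the identification of $\lambda_*$ and the Weyl non-vanishing via Lemma~\ref{thm:weylproduct}; everything else is a verbatim transplant, so I do not expect a serious obstacle, though one must be a little careful that the compactness of Proposition~\ref{thm:compactYamabe} requires hypothesis (b) uniformly on a compact $\lambda$-interval, which is exactly what Lemma~\ref{thm:weylproduct} (with positive curvatures) delivers.
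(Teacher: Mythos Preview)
Your proposal is correct and follows essentially the same approach as the paper: the paper's proof simply states that the argument of Proposition~\ref{thm:uniquenessprodspheres} carries over verbatim, once one observes that $\lambda_*=\frac{m_0\kappa^{(1)}}{m_1\kappa^{(0)}}$ is the Einstein instant and that Lemma~\ref{thm:weylproduct} guarantees the Weyl hypothesis~(b) of Proposition~\ref{thm:compactYamabe}. You have spelled out exactly these two points and then transplanted the openness and accumulation-point arguments, so your plan matches the paper's proof in both strategy and detail.
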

\begin{proof}
The proof of Proposition~\ref{thm:uniquenessprodspheres} can be repeated \emph{verbatim} here, observing that
the value $\lambda_*=\frac{m_0\,\kappa^{(1)}}{m_1\,\kappa^{(0)}}$ corresponds to the unique Einstein metric of the family
$\mathbf g_\lambda$. As to the compactness, note that assumption (b) of Proposition~\ref{thm:compactYamabe} is always satisfied
in products of Einstein manifolds with positive scalar curvature, by Lemma~\ref{thm:weylproduct}.
\end{proof}
\subsection{The case of non positive scalar curvature}
Let us now study the bifurcation problem for the family $\mathbf g_\lambda$ of metrics on the product
$M_0\times M_1$ under the assumption that either $\kappa^{(0)}$ or $\kappa^{(1)}$ are non positive.
First, we observe that if both $\kappa^{(0)}$ and $\kappa^{(1)}$ are non positive, then the pair $\big(\mathbf g^{(0)},\mathbf g^{(1)}\big)$ is nondegenerate.
If $\kappa^{(0)}\le0$ and $\kappa^{(1)}>0$, then the pair $\big(\mathbf g^{(0)},\mathbf g^{(1)}\big)$ is degenerate if and only if $\kappa^{(0)}=0$
and $\rho^{(1)}_{j_*}=\frac{\kappa^{(1)}}{m-1}$.
\begin{teo}\label{thm:nonpositivecase}
If $\kappa^{(0)}\le0$ and $\kappa^{(1)}\le0$, then the family $\mathbf g_\lambda$ has no degeneracy instants, and thus it is locally rigid
at every $\lambda\in\left]0,+\infty\right[$.

If $\kappa^{(0)}\le0$, $\kappa^{(1)}>0$ and the pair $\big(\mathbf g^{(0)},\mathbf g^{(1)}\big)$ is nondegenerate, then the
set of degeneracy instants for the Jacobi operator $\mathcal J_\lambda$ is a strictly decreasing sequence $\lambda_n$ that converges to $0$ as $n\to\infty$.
Moreover, every degeneracy instant is a bifurcation instant for the family $(\mathbf g_\lambda)_\lambda$.

Symmetrically,  if $\kappa^{(0)}>0$, $\kappa^{(1)}\le0$ and the pair $\big(\mathbf g^{(0)},\mathbf g^{(1)}\big)$ is nondegenerate, then the
set of degeneracy instants for the Jacobi operator $\mathcal J_\lambda$ is a strictly increasing unbounded sequence $\lambda_n$,
and every degeneracy instant is a bifurcation instant for the family $(\mathbf g_\lambda)_\lambda$.
\end{teo}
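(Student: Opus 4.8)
The plan is to read everything off from the sign pattern of the eigenvalue functions $\sigma_{i,j}(\lambda)=A_i+\tfrac1\lambda B_j$ of \eqref{eigenij}, where $A_i=\rho^{(0)}_i-\tfrac{\kappa^{(0)}}{m-1}$ and $B_j=\rho^{(1)}_j-\tfrac{\kappa^{(1)}}{m-1}$, and then to feed the resulting picture into Proposition~\ref{thm:localrigidity} and Theorem~\ref{thm:mainbif}, much as in the proofs of Corollary~\ref{thm:cordeginstants} and Theorem~\ref{thm:Yamabebif}. First I would dispatch the case $\kappa^{(0)}\le0$, $\kappa^{(1)}\le0$: here $\kappa_\lambda=\kappa^{(0)}+\tfrac1\lambda\kappa^{(1)}\le0$ for every $\lambda>0$, so $\tfrac{\kappa_\lambda}{m-1}$ is either negative — hence not an eigenvalue of the nonnegative operator $\Delta_\lambda$ — or zero; in neither case is $\lambda$ a degeneracy instant, and Proposition~\ref{thm:localrigidity} gives local rigidity everywhere. (Equivalently, $A_i,B_j\ge0$ forces every $\sigma_{i,j}$ appearing in $\Sigma(\mathcal J_\lambda)$ to be strictly positive, so $\mathcal J_\lambda$ is always an isomorphism.)

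Next I would treat $\kappa^{(0)}\le0$, $\kappa^{(1)}>0$ with $\big(\mathbf g^{(0)},\mathbf g^{(1)}\big)$ nondegenerate. Here $A_i\ge0$ for all $i$, and since $i_*=1$, $\rho^{(0)}_1=0$, the pair is degenerate exactly when $\kappa^{(0)}=0$ and $B_{j_*}=0$; so in the nondegenerate case either $\kappa^{(0)}<0$ and $A_i>0$ for every $i$, or $\kappa^{(0)}=0$, $B_{j_*}>0$, and $A_i>0$ for $i\ge2$ while $\sigma_{1,j}(\lambda)=\tfrac1\lambda B_j$ has no zero in $\left]0,+\infty\right[$ and no $\sigma_{i,j}$ is identically zero. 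Consequently $\sigma_{i,j}$ has a unique zero in $\left]0,+\infty\right[$ precisely when $A_i>0$ and $B_j<0$, i.e.\ $j<j_*$, the zero being $\lambda_{i,j}=|B_j|/A_i$, and on this range $\sigma_{i,j}$ is \emph{strictly increasing}, since $B_j<0$ makes $\lambda\mapsto\tfrac1\lambda B_j$ increasing. As only finitely many $j$ qualify and, for each, $\lambda_{i,j}=|B_j|/A_i\to0$ monotonically as $i\to\infty$ (because $A_i\to+\infty$), the degeneracy instants form a discrete bounded set accumulating only at $0$, i.e.\ a strictly decreasing sequence $\lambda_n\to0$.

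To produce bifurcation at a degeneracy instant $\overline\lambda$, I would use that every eigenvalue function vanishing there is strictly increasing, hence the count $n_\lambda$ of eigenvalues of $\Delta_\lambda$ below $\tfrac{\kappa_\lambda}{m-1}$ strictly decreases across $\overline\lambda$. Choosing $\varepsilon>0$ so that $\overline\lambda$ is the only degeneracy instant in $[\overline\lambda-\varepsilon,\overline\lambda+\varepsilon]\subset\left]0,+\infty\right[$ (possible by discreteness), the endpoints are not degeneracy instants, so hypotheses (a), (b) of Theorem~\ref{thm:mainbif} hold there, while $n_{\overline\lambda-\varepsilon}\ne n_{\overline\lambda+\varepsilon}$ gives (c); the theorem then produces a bifurcation instant in that interval, which must equal $\overline\lambda$ since bifurcation instants are always degeneracy instants (Proposition~\ref{thm:localrigidity}). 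The remaining case $\kappa^{(0)}>0$, $\kappa^{(1)}\le0$ I would reduce to the previous one by the symmetry recalled at the start of the section — exchanging $(M_0,\mathbf g^{(0)})$ with $(M_1,\mathbf g^{(1)})$ and $\lambda$ with $1/\lambda$ turns a decreasing sequence accumulating at $0$ into an increasing unbounded one; alternatively, argue directly that now $B_j\ge0$, a zero of $\sigma_{i,j}$ forces $A_i<0$ and $B_j>0$, such functions are strictly decreasing, only finitely many $i$ have $A_i<0$ so $\lambda_{i,j}=B_j/|A_i|\to+\infty$, and the bifurcation argument applies verbatim.

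I do not anticipate a real obstacle here. The one delicate point is the bookkeeping of the nondegeneracy hypothesis — checking that the zeros $\lambda_{i,j}$ are genuinely defined and, in the borderline sub-case $\kappa^{(0)}=0$, that the eigenvalue function through $\rho^{(0)}_1=0$ is set aside. The structural reason everything goes through cleanly is that in each non-positive case \emph{all} eigenvalue functions that cross zero are monotone in the \emph{same} direction, so no neutral degeneracy instants arise and the plain (non-equivariant) Theorem~\ref{thm:mainbif} already suffices, with no recourse to Theorem~\ref{thm:mnainbifequiv}.
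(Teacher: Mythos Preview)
Your proposal is correct and follows essentially the same approach as the paper's proof: an elementary sign analysis of the functions $\sigma_{i,j}(\lambda)=A_i+\tfrac1\lambda B_j$, observing that in each mixed-sign case all the eigenvalue functions that cross zero are monotone in the same direction, so the Morse index genuinely jumps at every degeneracy instant and Theorem~\ref{thm:mainbif} applies directly. Your treatment is in fact more careful than the paper's sketch, since you handle the borderline sub-case $\kappa^{(0)}=0$ explicitly (where $A_1=0$ and one must check that $\sigma_{1,j}$ contributes no zeros and that nondegeneracy rules out $B_{j_*}=0$).
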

\begin{proof}
Follows from an elementary analysis of the zeroes of the functions $\sigma_{i,j}(\lambda)$ given in  \eqref{eigenij}. In the first case $\sigma_{i,j}(\lambda)>0$
for all $i$, $j=0,1,\dots$, $i+j\neq 0$. In the second (resp.\ in the third) one, the function $\sigma_{i,j}(\lambda)$ admits a zero for all $i\ge0$, and for
$j\in\{0,1,\ldots, j_*-1\}$ (resp., for all $j\ge0$, and for $i\in\{0,1,\ldots, i_*-1\}$) . Then we have a sequence of instants $(\lambda_n)_n$, that converges
to $0$ (resp. to $+\infty$) as $n\to\infty$ (see the proof of Corollary~\ref{thm:cordeginstants}), at each of which there is a jump in the dimension of the
negative eigenspace of $\mathcal J_\lambda$. The conclusion follows from Theorem~\ref{thm:mainbif}.
\end{proof}

\subsection{A multiplicity result in conformal classes of the bifurcating branches}
\label{sub:multiplicity}
Let us consider the case of constant scalar curvature manifolds $(M_0,\mathbf g^{(0)})$ and $(M_1,\mathbf g^{(1)})$, with $\kappa^{(1)}>0$,
and consider the product manifold $M=M_0\times M_1$ endowed with the family of metrics $\mathbf g_\lambda=\mathbf g^{(0)}\oplus\lambda\,\mathbf g^{(1)}$.
Let us recall the following terminology. A unit volume metric $\mathbf g$ on $M$ is a \emph{Yamabe} metric if it has constant scalar curvature, and
it realizes the minimum of all the scalar curvature among the unit volume constant scalar curvature in its conformal class.
Let $\mathcal Y(M)$ denote the \emph{Yamabe invariant} of $M$; recall that this is the supremum of the scalar curvature of all Yamabe metrics
of $M$. It is well known that $\mathcal Y(M)\le\mathcal Y(\mathds S^m)$.
\begin{prop}\label{thm:3CSC}
Let $\lambda_n$ be the decreasing sequence of bifurcation instants for the family $\mathbf g_\lambda$, with $\lim\limits_{n\to\infty}\lambda_n=0$.
Then, for $n$ sufficiently large, the conformal class of each metric in the branch bifurcating from
$\mathbf g_{\lambda_n}$ contains at least
\emph{three} distinct unit volume constant scalar curvature metrics.
\end{prop}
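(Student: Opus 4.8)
The plan is to show that for large $n$, the metric $\mathbf g_{\lambda_n}$ itself (and hence, by continuity, every metric on the bifurcating branch issuing from it) has \emph{small} scalar curvature relative to the Yamabe invariant of $M$, so that the standard solution of the Yamabe problem (Aubin--Schoen) produces a Yamabe minimizer in the conformal class of $\mathbf g_{\lambda_n}$ which is distinct from $\mathbf g_{\lambda_n}$; together with the bifurcating metric this gives three. First I would normalize: since along the branch bifurcating at $\mathbf g_{\lambda_n}$ all metrics have unit volume (this is how the functional $\mathcal A$ is set up in Section~\ref{sec:varsetting}), it suffices to count unit volume constant scalar curvature metrics in the conformal class $[\mathbf g_{\lambda_n}]$. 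By \eqref{eq:kappalambda} we have $\kappa_\lambda=\kappa^{(0)}+\tfrac1\lambda\kappa^{(1)}$, so as $\lambda\to 0^+$ with $\kappa^{(1)}>0$ the scalar curvature of $\mathbf g_\lambda$ tends to $+\infty$; however, the volume of $\mathbf g_\lambda=\mathbf g^{(0)}\oplus\lambda\mathbf g^{(1)}$ tends to $0$ like $\lambda^{m_1/2}$, so after renormalizing to unit volume (multiply by $\lambda^{-m_1/m}$, using Remark~\ref{thm:remDeltakappa} for how $\kappa$ scales) the unit-volume representative $\widetilde{\mathbf g}_\lambda$ has scalar curvature $\widetilde\kappa_\lambda=\lambda^{m_1/m}\big(\kappa^{(0)}+\tfrac1\lambda\kappa^{(1)}\big)=\lambda^{m_1/m-1}\kappa^{(1)}+\lambda^{m_1/m}\kappa^{(0)}\to 0$ as $\lambda\to 0^+$, since $m_1/m<1$. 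Hence $\int_M\widetilde\kappa_{\lambda_n}\,\nu_{\widetilde{\mathbf g}_{\lambda_n}}=\widetilde\kappa_{\lambda_n}\to 0$.

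Next I would invoke the solution of the Yamabe problem: the conformal class $[\mathbf g_{\lambda_n}]$ contains a unit volume Yamabe metric $\mathbf h_n$ whose (constant) scalar curvature equals the Yamabe constant $Y([\mathbf g_{\lambda_n}])\le\mathcal Y(\mathds S^m)$, and which is a \emph{minimizer} of $\mathcal A$ in the conformal class. The point is to show $\mathbf h_n\ne\widetilde{\mathbf g}_{\lambda_n}$ for large $n$. If $\kappa^{(0)}\le 0$ this is already clear since then the Yamabe constant of $[\mathbf g_{\lambda_n}]$ is $\le 0$ (one can conformally decrease the scalar curvature), while $\widetilde\kappa_{\lambda_n}>0$; in general it is enough to observe that $\widetilde{\mathbf g}_{\lambda_n}$ is \emph{not} a local minimum of $\mathcal A$ in its conformal class for large $n$: indeed at a bifurcation instant $\lambda_n$ the Jacobi operator $\mathcal J_{\lambda_n}$ is singular and, by the analysis in Lemma~\ref{thm:zerossigmaij} and the proof of Theorem~\ref{thm:Yamabebif}, for the decreasing sequence accumulating at $0$ the eigenvalue functions $\sigma_{i,0}$ responsible for the degeneracy are increasing, so just below $\lambda_n$ the operator $\mathcal J_\lambda$ has a negative eigenvalue, i.e. $\mathbf g_\lambda$ has Morse index $\ge 1$; moreover the number of negative eigenvalues $n_\lambda$ tends to $+\infty$ as $\lambda\to 0^+$, so for $n$ large $\widetilde{\mathbf g}_{\lambda_n}$ has positive Morse index and cannot be the Yamabe minimizer $\mathbf h_n$. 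Therefore $\mathbf h_n$, $\widetilde{\mathbf g}_{\lambda_n}$ are two distinct unit volume constant scalar curvature metrics in $[\mathbf g_{\lambda_n}]$.

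Finally, the bifurcating branch supplies a third one. By Theorem~\ref{thm:Yamabebif} there is, for $n$ large (all but finitely many degeneracy instants), a branch $s\mapsto\mathbf g_n(s)$ of unit volume constant scalar curvature metrics issuing from $\widetilde{\mathbf g}_{\lambda_n}$ with $\mathbf g_n(s)$ not in the trivial family and not homothetic to any $\mathbf g_\mu$. For $s\ne 0$ small, $\mathbf g_n(s)\ne\widetilde{\mathbf g}_{\lambda_n}$, and $\mathbf g_n(s)$ has Morse index close to that of $\widetilde{\mathbf g}_{\lambda_n}$ (by lower semicontinuity of the index, using that $\mathbf g_n(s)$ is close in $\mathcal C^{2,\alpha}$ to $\widetilde{\mathbf g}_{\lambda_n}$ and the Jacobi operator depends continuously), hence $\ge 1$, so $\mathbf g_n(s)\ne\mathbf h_n$ as well (the Yamabe minimizer has index $0$). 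Thus $\mathbf h_n$, $\widetilde{\mathbf g}_{\lambda_n}$, $\mathbf g_n(s)$ are three distinct unit volume constant scalar curvature metrics, all in the conformal class of the branch metric $\mathbf g_n(s)$ (which coincides with $[\mathbf g_{\lambda_n}]$ since conformal change preserves conformal class). I expect the main obstacle to be making precise the comparison of Morse indices along the bifurcating branch and the argument that the Yamabe minimizer is genuinely distinct from the trivial metric for large $n$ — i.e. ruling out the degenerate possibility that $\widetilde{\mathbf g}_{\lambda_n}$ itself happens to be Yamabe; the Morse-index divergence $n_{\lambda_n}\to\infty$ is the cleanest way around this, and should be extracted carefully from Lemma~\ref{thm:zerossigmaij} and \eqref{eigenij}.
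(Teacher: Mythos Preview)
Your first paragraph contains a computational error that actually reverses the conclusion. You correctly write
\[
\widetilde\kappa_\lambda=\lambda^{m_1/m-1}\kappa^{(1)}+\lambda^{m_1/m}\kappa^{(0)},
\]
but since $m_1/m-1=-m_0/m<0$, the exponent in the first term is \emph{negative}, so $\lambda^{m_1/m-1}\to+\infty$ as $\lambda\to0^+$. Hence $\widetilde\kappa_\lambda\to+\infty$, not $0$. This is exactly what is needed, and it is the paper's argument: since the Yamabe constant of any conformal class on $M$ is bounded above by $\mathcal Y(\mathds S^m)$, for $\lambda$ small enough the (unit-volume) scalar curvature $\widetilde\kappa_\lambda$ exceeds $\mathcal Y(\mathds S^m)\ge Y\big([\mathbf g_\lambda]\big)$, so $\mathbf g_\lambda$ is not a Yamabe metric. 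By continuity of the scalar curvature, the same strict inequality holds for every metric on the bifurcating branch close to $\mathbf g_{\lambda_n}$, so those are not Yamabe either. This already produces the three distinct unit-volume constant scalar curvature metrics (trivial, bifurcating, Yamabe minimizer) with no appeal to Morse indices. Your stated plan of showing the scalar curvature is ``small relative to the Yamabe invariant'' is therefore backwards; it is the \emph{largeness} of $\widetilde\kappa_\lambda$ that forces $\mathbf g_\lambda$ to be non-Yamabe.

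Your Morse-index route in the second and third paragraphs is, however, a valid alternative that reaches the same conclusion by different means. The claim $n_\lambda\to\infty$ as $\lambda\to0^+$ is correct: with $j=0$ one has $\sigma_{i,0}(\lambda)=A_i-\tfrac{\kappa^{(1)}}{(m-1)\lambda}$, and the number of indices $i$ with $A_i<\tfrac{\kappa^{(1)}}{(m-1)\lambda}$ tends to infinity as $\lambda\to0$. Thus $\mathbf g_{\lambda_n}$, and by lower semicontinuity of the index also nearby constant scalar curvature metrics, have positive Morse index and hence cannot be the Yamabe minimizer. Compared with the paper's argument, this gives a bit more (the index blows up, not merely becomes positive) at the cost of invoking semicontinuity of the index along the branch; the paper's scalar-curvature comparison with $\mathcal Y(\mathds S^m)$ is shorter and makes the ``nearby metrics are not Yamabe'' step a one-line continuity statement. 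Once the sign error in your first computation is fixed, either approach completes the proof.
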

\begin{proof}
Since $\kappa^{(1)}>0$, one has $\lim\limits_{\lambda\to0^+}\kappa_\lambda=+\infty$, see \eqref{eq:kappalambda}.
Thus, for $\lambda>0$ sufficiently small, $\kappa_\lambda>\mathcal Y(\mathds S^m)\ge\mathcal Y(M)$, which implies that for
$\lambda$ small enough, $\mathbf g_\lambda$ is not a Yamabe metric. Thus, for $n$ large, $\mathbf g_{\lambda_n}$ is not a Yamabe metric,
and by continuity also nearby metrics are not Yamabe. Hence, each conformal class of the bifurcating branch issuing from
  $\mathbf g_{\lambda_n}$ contains a constant scalar curvature of the family, another distinct constant scalar curvature near by,
  and a Yamabe metric.
\end{proof}
\end{section}

\appendix
\begin{section}{Fiberwise implicit function theorem and bifurcation}\label{sec:appfiber}
In this appendix we give a formal statement of an implicit function theorem and two bifurcation results
for functions defined on the total space of a fiber bundle. Their proof is obtained readily from standard
results, and they will be omitted.
\subsection{Implicit function theorem}\label{sub:fiberIFT}
Given fiber bundles $\pi_i:E_i\to B_i$, $i=1,2$, and a $\mathcal C^1$-morphism of fiber bundles $\mathrm M:E_1\to E_2$,
the \emph{vertical derivative of $\mathrm M$ at $e\in E_1$} is the linear map \[\mathrm d_{\mathrm{ver}}\mathrm M(e):T_e\mathcal F(e)\to T_{\mathrm M(e)}\mathcal F\big(\mathrm M(e)\big)\]
given by the differential of the restriction $\mathrm M\big\vert_{\mathcal F(e)}:\mathcal F(e)\to\mathcal F\big(\mathrm M(e)\big)$, where $\mathcal F(e)=\pi_1^{-1}\big(\pi_1(e)\big)\subset E_1$
is the fiber of $E_1$ through the point $e$, and
$\mathcal F\big(\mathrm M(e)\big)=\pi_2^{-1}\big(\pi_2(M(e))\big)\subset E_2$ is the fiber of $E_2$ through $\mathrm M(e)$.\smallskip

We have used in the proof of Proposition~\ref{thm:localrigidity} a sort of \emph{fiber bundle implicit function theorem}, whose statement is as follows:
\begin{prop}\label{thm:fiberIFT}
Let $\pi_i:E_i\to B$, $i=1,2$, be fiber bundles, let $\mathrm M:E_1\to E_2$ be a fiber bundle morphism of class $\mathcal C^{k}$, $k\ge1$, let $s:U\subset B\to E_2$ be a local section of $E_2$
of class $\mathcal C^{k}$, with $U$ open subset
of $B$ containing $x_0$, $s(x_0)=e_2$, and let $e_1\in\mathrm M^{-1}(e_2)$. Assume that the vertical derivative
$\mathrm d_{\mathrm{ver}}\mathrm M(e_1)$ is an isomorphism. Then, there exists an open neighborhood $V$ of $e_1$ in $E_1$, with $U'=\pi_1(V)\subset U$, and a $\mathcal C^k$-section
$\tilde s:U'\to E_1$ with $\tilde s(x_0)=e_1$, such that $e\in V\cap \mathrm M^{-1}\big(s(U)\big)$ if and only if $e\in\tilde s(U')$.\qed
\end{prop}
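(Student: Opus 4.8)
The plan is to reduce this fiberwise statement to the classical implicit function theorem in Banach spaces by working in a local trivialization of the two bundles over the base point $x_0$. First I would choose an open neighborhood $W\subseteq U$ of $x_0$ over which both $E_1$ and $E_2$ are trivial, fixing bundle charts $\Phi_i:\pi_i^{-1}(W)\xrightarrow{\ \sim\ }W\times F_i$, $i=1,2$, where $F_i$ is the typical (Banach manifold) fiber; I may further shrink $W$ and use a chart on $F_1$ near the point representing $e_1$ and on $F_2$ near the point representing $e_2$, so that locally $E_1\cong W\times \mathcal U_1$ and $E_2\cong W\times \mathcal U_2$ with $\mathcal U_i$ open in a Banach space $X_i$, and $e_1\leftrightarrow(x_0,u_1)$, $e_2\leftrightarrow(x_0,u_2)$. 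In these coordinates the morphism $\mathrm M$ becomes a $\mathcal C^k$ map $(x,u)\mapsto(x,\widehat{\mathrm M}(x,u))$ with $\widehat{\mathrm M}:W\times\mathcal U_1\to X_2$, because $\mathrm M$ covers the identity on the base; similarly the section $s$ becomes $x\mapsto(x,\widehat s(x))$ with $\widehat s:W\to X_2$ of class $\mathcal C^k$ and $\widehat s(x_0)=u_2$.

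Next I would set up the equation whose solution set we want to describe. Define $G:W\times\mathcal U_1\to X_2$ by $G(x,u)=\widehat{\mathrm M}(x,u)-\widehat s(x)$; then, after shrinking to the chart domains, a point $e\in\pi_1^{-1}(W)$ lies in $\mathrm M^{-1}\big(s(U)\big)$ precisely when its coordinates $(x,u)$ satisfy $G(x,u)=0$. We have $G(x_0,u_1)=\widehat{\mathrm M}(x_0,u_1)-\widehat s(x_0)=u_2-u_2=0$ since $\mathrm M(e_1)=e_2$. The partial derivative $\partial_u G(x_0,u_1)=\partial_u\widehat{\mathrm M}(x_0,u_1)$ is, by construction of the trivialization, exactly the coordinate representation of the vertical derivative $\mathrm d_{\mathrm{ver}}\mathrm M(e_1):T_{e_1}\mathcal F(e_1)\to T_{e_2}\mathcal F(e_2)$, which is an isomorphism by hypothesis. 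The classical implicit function theorem (valid in Banach spaces, and for $\mathcal C^k$ maps yielding a $\mathcal C^k$ implicit function) then provides an open neighborhood $W'\times\mathcal U_1'$ of $(x_0,u_1)$ and a unique $\mathcal C^k$ map $\gamma:W'\to\mathcal U_1'$ with $\gamma(x_0)=u_1$ such that, for $(x,u)\in W'\times\mathcal U_1'$, one has $G(x,u)=0$ if and only if $u=\gamma(x)$.

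Finally I would translate back. Let $V\subseteq E_1$ be the open set corresponding to $W'\times\mathcal U_1'$ under $\Phi_1$, put $U'=\pi_1(V)=W'$, and define $\tilde s:U'\to E_1$ as the section whose coordinate expression is $x\mapsto(x,\gamma(x))$; this is $\mathcal C^k$ and satisfies $\tilde s(x_0)=e_1$. By the equivalence obtained from the implicit function theorem, for $e\in V$ we have $e\in \mathrm M^{-1}\big(s(U)\big)$ iff the coordinates $(x,u)$ of $e$ satisfy $G(x,u)=0$ iff $u=\gamma(x)$ iff $e\in\tilde s(U')$, which is exactly the assertion. I do not expect a genuine obstacle here; the only point requiring a little care is checking that the chart of $E_1$ can be arranged so that $\partial_u\widehat{\mathrm M}(x_0,u_1)$ really is the vertical derivative (rather than that composed with some fixed isomorphism), and that shrinking neighborhoods keeps $\pi_1(V)$ open and contained in $U$ — both are routine, and the essential content is entirely the classical implicit function theorem applied in the trivialization.
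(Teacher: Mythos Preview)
Your proposal is correct and is exactly the standard reduction one expects: pass to local trivializations over $x_0$, write $\mathrm M$ and $s$ in coordinates, and apply the classical Banach space implicit function theorem to $G(x,u)=\widehat{\mathrm M}(x,u)-\widehat s(x)$, noting that $\partial_uG(x_0,u_1)=\mathrm d_{\mathrm{ver}}\mathrm M(e_1)$. The paper itself omits the proof entirely (the proposition carries a \qed\ and the appendix says the proofs ``are obtained readily from standard results, and they will be omitted''), so there is nothing to compare beyond observing that your argument is precisely the routine verification the authors had in mind.
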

\subsection{Fiberwise bifurcation}
\label{sub:fiberbif}
We propose a slightly more general statement of a celebrated bifurcation result by Smoller and Wasserman, see \cite{SmoWas}.
Recall that the basic setup of \cite{SmoWas} consists of a path $\lambda\mapsto\mathrm M_\lambda$ of \emph{gradient operators}
from a \emph{fixed} Banach space $B_2$ to another \emph{fixed} Banach space $B_0$, with $B_2\subset B_0$, and a path $\lambda\to u_\lambda\in B_2$
satisfying $M_\lambda(u_\lambda)=0$ for all $\lambda$. The main results in \cite{SmoWas} give sufficient conditions for the
existence of bifurcation branch of solutions of the equation $F(u,\lambda)=\mathrm M_\lambda(u)=0$ issuing from some point
of the path $u_\lambda$, both in the general and in the equivariant case. These results are used in the present paper in
a slightly different context, in that our setup consists of a gradient operators $F_\lambda$ defined on a smoothly
\emph{varying} Banach submanifold $\mathcal D_\lambda$ of a fixed Banach space, and taking values also in a smoothly \emph{varying}
family $\mathcal E_\lambda$ of closed subspaces of a Banach space. An extension of the results in \cite{SmoWas} to this situation
is quite straightforward, using local charts and projections, nevertheless it may be interesting to provide a
precise statement of the result which is employed in the present paper.

Let us give a few definitions. Given a Banach space $B$, a family $[a,b]\ni\lambda\mapsto B_\lambda$ of Banach submanifolds of $B$
is said to be a $\mathcal C^1$-family of submanifolds of $B$ if the set $\mathcal B=\big\{(x,\lambda)\in B\times[a,b]:x\in B_\lambda\big\}$
has the structure of a $\mathcal C^1$-sub-bundle of the trivial bundle $B\times[a,b]$ over $[a,b]$. For instance,
given a $\mathcal C^1$-function $f:B\times[a,b]\to\mathds R$ such that $\frac{\partial f}{\partial x}\ne0$ at all points in
$f^{-1}(0)$, then the family $B_\lambda=\big\{(x,\lambda):f(x,\lambda)=0\big\}$ is a $\mathcal C^1$-family of submanifolds of $B$.
Similarly, by a $\mathcal C^1$-family of closed subspaces of the Banach space $B$ we mean a family
$[a,b]\ni\lambda\mapsto S_\lambda$ of Banach subspaces of $B$ such that the set $\mathcal S=\big\{(x,\lambda):\lambda\in[a,b],\ x\in S_\lambda\big\}$
is a sub-bundle of the trivial Banach space bundle $B\times[a,b]$ over $[a,b]$. If $\lambda\mapsto x_\lambda\in B$ is a $\mathcal C^1$-path,
$\mathcal B=\bigcup_\lambda\big(B_\lambda\times\{\lambda\}\big)$ is a $\mathcal C^1$-family of submanifolds of $B$, with $x_\lambda\in B_\lambda$ for all $\lambda$,
then the path $\lambda\mapsto T_{x_\lambda}S_\lambda$ is a $\mathcal C^1$-family of closed subspaces of $B$.
\begin{teo}\label{thm:extSmoWasgeneral}
Let $B_0,B_2$ be Banach spaces, $H$ a Hilbertable space. Let $[a,b]\ni\lambda\mapsto\mathcal D_\lambda\subset B_2$
be a $\mathcal C^1$-family of submanifolds of $B_2$, and let $[a,b]\ni\lambda\mapsto\mathcal E_\lambda\subset B_0$ and
$[a,b]\ni\lambda\mapsto H_\lambda\subset H$ be $\mathcal C^1$-families of closed
subspaces of $B_0$ and of $H$ respectively.
 Let $F:\mathcal D\to\mathcal E$ be a $\mathcal C^1$ bundle morphism, and assume that the following are satisfied:
\begin{itemize}
\item[(a)] $\lambda\mapsto e_\lambda\in\mathcal E_\lambda$ is a $\mathcal C^1$-section of the bundle $\mathcal E$;
\item[(b)] $\lambda\mapsto d_\lambda\in\mathcal D_\lambda$ is a $\mathcal C^1$-section of the bundle $\mathcal D$, with \[F(d_\lambda,\lambda)=(e_\lambda,\lambda)\] for all $\lambda$;
\item[(c)] it is given a $\mathcal C^1$-family of complete inner products $\lambda\mapsto\langle\cdot,\cdot\rangle_\lambda$ in $H_\lambda$;
\item[(d)] there are continuous inclusions $B_2\subset B_0\subset H$ that induce inclusions $T_{d_\lambda}\mathcal D_\lambda\subset\mathcal E_\lambda\subset H_\lambda$
for all $\lambda$;
\item[(e)] for all $\lambda$, the map $F_\lambda=F(\cdot,\lambda):\mathcal D_\lambda\to\mathcal E_\lambda$ is a gradient operator at $d_\lambda$, i.e.,
the differential $\mathrm dF(\cdot,\lambda):T_{d_\lambda}\mathcal D_\lambda\to\mathcal E_\lambda$ is symmetric relatively to
the inner product $\langle\cdot,\cdot\rangle_\lambda$;
\item[(f)] $\mathrm dF(\cdot,\lambda):T_{d_\lambda}\mathcal D_\lambda\to\mathcal E_\lambda$ is Fredholm of index $0$ for all $\lambda$;
\item[(g)] for all $\lambda$, there exists an $\langle\cdot,\cdot\rangle_\lambda$-orthonormal basis $e_1^\lambda,e_2^\lambda,\ldots$ of $H_\lambda$ consisting of eigenvectors
of $\mathrm dF(\cdot,\lambda)$;
\item[(h)] the corresponding eigenvectors have finite multiplicities, and for all $\lambda$ the number $n_\lambda$ of eigenvalues (counted with multiplicities)
of $\mathrm dF(\cdot,\lambda)$ that are negative is finite;
\item[(i)] there exists $\lambda_*\in\left]a,b\right[$ such that, for $\varepsilon>0$ sufficiently small:
 \begin{itemize}
 \item $\mathrm dF(\cdot,\lambda_*-\varepsilon)$ and $\mathrm dF(\cdot,\lambda_*+\varepsilon)$ are non singular;
\item $n_{\lambda_*-\varepsilon}\ne n_{\lambda_*+\varepsilon}$.
\end{itemize}
\end{itemize}
Then,  $\lambda_*$  is a \emph{bifurcation instant} for the equation \[F(\cdot,\lambda)=(e_\lambda,\lambda),\] i.e.,
there exists a sequence $d_n\in B_2$, and a sequence $\lambda_n$ in $[a,b]$, with $d_n\in\mathcal D_{\lambda_n}$ for all $n$, $\lim\limits_{n\to\infty}\lambda_n=\lambda_*$, $\lim\limits_{n\to\infty}d_n=d_{\lambda_*}$, $d_n\ne d_{\lambda_n}$ for all $n$, and such that \[F(d_n,\lambda_n)=(e_{\lambda_n},\lambda_n)\] for all $n$.
\end{teo}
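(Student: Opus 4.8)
The plan is to reduce the statement, by means of local $\mathcal C^1$-trivializations, to the non-equivariant gradient bifurcation theorem of Smoller and Wasserman \cite[Theorem~2.1, p.\ 67]{SmoWas}, which is formulated for a path of gradient operators between a \emph{fixed} pair of Banach spaces sitting inside a fixed Hilbertable space. First I would localize: fix a small closed interval $I=[\lambda_*-\delta,\lambda_*+\delta]\subset\left]a,b\right[$. Since $\mathcal D$, $\mathcal E$ and $\mathcal H=\bigcup_\lambda\big(H_\lambda\times\{\lambda\}\big)$ are, by hypothesis, $\mathcal C^1$-sub-bundles of the trivial bundles $B_2\times[a,b]$, $B_0\times[a,b]$ and $H\times[a,b]$, after shrinking $\delta$ one may choose $\mathcal C^1$-bundle isomorphisms over $I$
\[
\Phi\colon\mathcal D|_I\to\mathcal D_{\lambda_*}\times I,\qquad\Psi\colon\mathcal E|_I\to\mathcal E_{\lambda_*}\times I,\qquad\Theta\colon\mathcal H|_I\to H_{\lambda_*}\times I,
\]
restricting to the identity over $\lambda_*$, compatible with the chain of inclusions $T_{d_\lambda}\mathcal D_\lambda\subset\mathcal E_\lambda\subset H_\lambda$, and carrying the distinguished sections $\lambda\mapsto d_\lambda$ and $\lambda\mapsto e_\lambda$ to the constant sections $\lambda\mapsto(d_{\lambda_*},\lambda)$ and $\lambda\mapsto(e_{\lambda_*},\lambda)$ (the last normalization being obtained by first composing with a fibrewise translation). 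Finally, composing with a chart of the Banach manifold $\mathcal D_{\lambda_*}$ centred at $d_{\lambda_*}$ identifies a neighbourhood of $d_{\lambda_*}$ with an open set $\mathcal O\ni 0$ in the fixed Banach space $B_2'=T_{d_{\lambda_*}}\mathcal D_{\lambda_*}$; set $B_0'=\mathcal E_{\lambda_*}$ and $H'=H_{\lambda_*}$, so that $B_2'\subset B_0'\subset H'$ continuously.

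After these identifications $F$ becomes a $\mathcal C^1$ map $\widetilde F\colon\mathcal O\times I\to B_0'$ with $\widetilde F(0,\lambda)=0$ for all $\lambda\in I$, and the pair consisting of $\widetilde F$ and the trivial branch $u_\lambda\equiv 0$ inherits from hypotheses (e)--(i) exactly the structure required by \cite{SmoWas}: each differential $L_\lambda=\mathrm d\widetilde F(\cdot,\lambda)(0)\colon B_2'\to B_0'$ is Fredholm of index $0$, has discrete spectrum with finite-dimensional eigenspaces and finite Morse index $n_\lambda$, is an isomorphism at $\lambda=\lambda_*\pm\varepsilon$, and $n_{\lambda_*-\varepsilon}\ne n_{\lambda_*+\varepsilon}$. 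The one point that genuinely needs care — and which I expect to be the main obstacle — is the gradient (self-adjointness) condition, because the inner products $\langle\cdot,\cdot\rangle_\lambda$ also vary: writing $\langle u,v\rangle_\lambda=\langle P_\lambda u,v\rangle_{\lambda_*}$ with $P_\lambda$ positive and self-adjoint for $\langle\cdot,\cdot\rangle_{\lambda_*}$, of class $\mathcal C^1$ in $\lambda$ and equal to the identity at $\lambda_*$, one conjugates $\widetilde F(\cdot,\lambda)$ by a suitable power of $P_\lambda$ so that every $L_\lambda$ becomes symmetric with respect to the \emph{single} inner product $\langle\cdot,\cdot\rangle_{\lambda_*}$; since $P_\lambda$ is a $\mathcal C^1$ path of isomorphisms near the identity, this conjugation preserves $\mathcal O\times I$, the trivial branch, the index-zero Fredholm property and the whole spectral picture, and one checks that it is compatible with the continuous chain $B_2'\subset B_0'\subset H'$. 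Alternatively, one inspects the proof of \cite{SmoWas} directly and observes that self-adjointness enters only through the splitting into finite-dimensional negative eigenspaces and an equivariant-degree computation on the unit sphere, both insensitive to a $\mathcal C^1$ deformation of the inner product.

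With the gradient structure settled, \cite[Theorem~2.1, p.\ 67]{SmoWas} applies to $\widetilde F$ and the branch $u_\lambda\equiv 0$ and produces sequences $\widetilde u_n\to 0$ in $B_2'$ with $\widetilde u_n\ne 0$, and $\lambda_n\to\lambda_*$ in $I$, such that $\widetilde F(\widetilde u_n,\lambda_n)=0$. Transporting back through the chart and the bundle isomorphism $\Phi$ gives $d_n\in\mathcal D_{\lambda_n}\subset B_2$ with $d_n\ne d_{\lambda_n}$, $d_n\to d_{\lambda_*}$ and $F(d_n,\lambda_n)=(e_{\lambda_n},\lambda_n)$ — exactly the assertion that $\lambda_*$ is a bifurcation instant for the equation $F(\cdot,\lambda)=(e_\lambda,\lambda)$. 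Everything outside the gradient-structure step is routine bookkeeping with local trivializations, so the proof amounts to verifying that the $\mathcal C^1$-bundle formalism of the statement can be flattened to the fixed-space, fixed-inner-product setting of \cite{SmoWas} without disturbing the gradient structure or the Fredholm index.
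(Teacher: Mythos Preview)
Your proposal is correct and follows essentially the same strategy as the paper's own proof: reduce to the fixed-space setting of \cite[Theorem~2.1]{SmoWas} by choosing local trivializations of the bundles $\mathcal D$ and $\mathcal E$ near $(d_{\lambda_*},\lambda_*)$ and $(e_{\lambda_*},\lambda_*)$, thereby obtaining a $\mathcal C^1$ path of gradient operators between fixed Banach spaces. The paper's argument is in fact considerably terser than yours---it simply asserts that the identifications yield a path of gradient operators and invokes Smoller--Wasserman directly---so your explicit treatment of the varying inner products (via conjugation by $P_\lambda^{1/2}$, or by inspecting that the relevant spectral and degree-theoretic ingredients of \cite{SmoWas} are insensitive to a $\mathcal C^1$ deformation of the inner product) is a genuine elaboration rather than a deviation.
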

\begin{proof}
Sufficiently small neighborhoods of $(d_{\lambda_*},\lambda_*)$ in $\mathcal D$ and of $(e_{\lambda_*},\lambda_*)$ in $\mathcal E$ are identified
respectively with open subsets of  products $T_{d_{\lambda_*}}\mathcal D_{\lambda_*}\times[\lambda_*-\varepsilon,\lambda_*+\varepsilon]$ and $\mathcal E_{\lambda_*}\times[\lambda_*-\varepsilon,\lambda_*+\varepsilon]$.
Using these identifications, the bundle morphism $F$ is given by a $\mathcal C^1$ path of gradient operators $F_\lambda$ between open subsets of the
Banach spaces $T_{d_{\lambda_*}}D_{\lambda_*}$ and $\mathcal E_{\lambda_*}$. The result is then obtained as a straightforward application
of \cite[Theorem~2.1]{SmoWas}.
\end{proof}
In the situation described by items (a)---(h) in Theorem~\ref{thm:extSmoWasgeneral},
assume that $G$ is a connected (or more generally, a \emph{nice} in the sense of \cite{SmoWas}) Lie group,
and that $B_0$, $B_2$ and $H$ are $G$-spaces. Assume that $\mathcal D_\lambda$, $\mathcal E_\lambda$ and $H_\lambda$ are $G$-invariant for
all $\lambda$, and that $F$ is $G$-equivariant, i.e.:
\[F(g\cdot d,\lambda)=g\cdot F(d,\lambda)\]
for all $(d,\lambda)\in\mathcal D$ and all $g\in G$.
Assume further that $g\cdot d_\lambda=d_\lambda$ and $g\cdot e_\lambda=e_\lambda$ for all $g\in G$ and all $\lambda$.
It is easy to see that every eigenspace of $\mathrm dF(\cdot,\lambda)$ is $G$-invariant for all $\lambda$. Denote by $\pi_\lambda^-$ the
representation of $G$ on the finite dimensional space given by the direct sum of all eigenspaces of  $\mathrm dF(\cdot,\lambda)$ corresponding
to negative eigenvalues.
\begin{teo}\label{thm:extSmoWasequivariant}
Let $\lambda_*\in\left]a,b\right[$ be such that, for $\varepsilon>0$ sufficiently small:
\begin{itemize}
\item $\mathrm dF(\cdot,\lambda_*-\varepsilon)$ and $\mathrm dF(\cdot,\lambda_*+\varepsilon)$ are non singular;
\item $\pi_{\lambda_*-\varepsilon}^-$ and $\pi_{\lambda_*+\varepsilon}^-$ are not equivalent.
\end{itemize}
Then,  $\lambda_*$ is a bifurcation instant for the equation $F(\cdot,\lambda)=(e_\lambda,\lambda)$.
\end{teo}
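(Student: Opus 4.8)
The plan is to follow the proof of Theorem~\ref{thm:extSmoWasgeneral} line by line, reducing the fiberwise problem to a problem on a fixed pair of Banach spaces, but now carrying the $G$-action along at every step, and then invoking the \emph{equivariant} bifurcation theorem of Smoller and Wasserman \cite[Theorem~3.1]{SmoWas} in place of the non-equivariant \cite[Theorem~2.1]{SmoWas} used there.

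First I would produce $G$-equivariant local trivializations of the bundles $\mathcal D$, $\mathcal E$ and $H$ in a neighborhood of $(d_{\lambda_*},\lambda_*)$. Since $G$ is compact and fixes each $d_\lambda$ and each $e_\lambda$, one may start from arbitrary $\mathcal C^1$ local trivializations and average over $G$ (using that $\mathcal D_\lambda$, $\mathcal E_\lambda$ and $H_\lambda$ are $G$-invariant) to obtain, for $\varepsilon>0$ small, $G$-equivariant identifications of a neighborhood of $(d_{\lambda_*},\lambda_*)$ in $\mathcal D$ (resp.\ in $\mathcal E$) with an open subset of $T_{d_{\lambda_*}}\mathcal D_{\lambda_*}\times[\lambda_*-\varepsilon,\lambda_*+\varepsilon]$ (resp.\ of $\mathcal E_{\lambda_*}\times[\lambda_*-\varepsilon,\lambda_*+\varepsilon]$), where $G$ acts through the linear isotropy representations on the first factor and trivially on $[\lambda_*-\varepsilon,\lambda_*+\varepsilon]$, the fixed sections $d_\lambda$ and $e_\lambda$ correspond to the origin, and the chain of continuous inclusions $T_{d_\lambda}\mathcal D_\lambda\subset\mathcal E_\lambda\subset H_\lambda$ is preserved. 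Averaging the family of inner products $\langle\cdot,\cdot\rangle_\lambda$ over $G$ (which changes nothing if they are already $G$-invariant), we may moreover assume that $G$ acts by isometries on each $(H_\lambda,\langle\cdot,\cdot\rangle_\lambda)$.

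Under these identifications $F$ becomes, exactly as in the proof of Theorem~\ref{thm:extSmoWasgeneral}, a $\mathcal C^1$-path $\lambda\mapsto F_\lambda$ of gradient operators between open subsets of the \emph{fixed} Banach spaces $T_{d_{\lambda_*}}\mathcal D_{\lambda_*}$ and $\mathcal E_{\lambda_*}$, with $F_\lambda(0)=0$ for all $\lambda$, and now each $F_\lambda$ is $G$-equivariant. Since $F$ is equivariant and $0$ is $G$-fixed, the linearization $L_\lambda=\mathrm dF_\lambda(0)=\mathrm dF(\cdot,\lambda)$ commutes with the $G$-action; being also symmetric with respect to the $G$-invariant inner product $\langle\cdot,\cdot\rangle_\lambda$, each of its eigenspaces is a finite-dimensional $G$-subrepresentation, and in particular the direct sum of its negative eigenspaces carries the representation $\pi^-_\lambda$ of the statement. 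Hypotheses (a)--(h) of Theorem~\ref{thm:extSmoWasgeneral} provide that $L_\lambda$ is Fredholm of index $0$ with discrete real spectrum and finitely many negative eigenvalues, while the present assumptions say that $L_{\lambda_*-\varepsilon}$ and $L_{\lambda_*+\varepsilon}$ are invertible and that $\pi^-_{\lambda_*-\varepsilon}$ and $\pi^-_{\lambda_*+\varepsilon}$ are inequivalent. These are precisely the hypotheses of \cite[Theorem~3.1]{SmoWas} (where the hypothesis that $G$ be \emph{nice} is exactly what guarantees that the change in the $G$-representation on the negative eigenspace is not washed out at the level of equivariant homotopy). Applying that theorem yields a bifurcating sequence of solutions of $F_{\lambda_n}(u_n)=0$ with $\lambda_n\to\lambda_*$, $u_n\to0$, $u_n\neq0$; transporting back through the equivariant trivializations produces the asserted sequences $d_n\in\mathcal D_{\lambda_n}$, $\lambda_n\to\lambda_*$, $d_n\to d_{\lambda_*}$, $d_n\neq d_{\lambda_n}$, with $F(d_n,\lambda_n)=(e_{\lambda_n},\lambda_n)$.

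The single point requiring care is the construction of the three \emph{simultaneous} $G$-equivariant local trivializations of $\mathcal D$, $\mathcal E$ and $H$ that are compatible with the inclusions among the fibers and that send the $G$-fixed sections to the origin; granting compactness of $G$ this is a routine averaging argument, and once it is in place everything downstream is a mechanical translation of \cite[Theorem~3.1]{SmoWas}, just as in the non-equivariant case.
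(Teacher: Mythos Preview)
Your proposal is correct and follows the same approach as the paper: reduce to a fixed pair of Banach spaces via local product structure around $(d_{\lambda_*},\lambda_*)$ and $(e_{\lambda_*},\lambda_*)$, then invoke \cite[Theorem~3.1]{SmoWas}. The paper's own proof is a two-line sketch that does not spell out the equivariance of the trivializations; your averaging argument fills in exactly this detail, though note that it tacitly uses compactness of $G$, an assumption not stated explicitly in the abstract theorem but satisfied in all the paper's applications.
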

\begin{proof}
The result is an application of \cite[Theorem~3.1]{SmoWas}, using a local product structure of $\mathcal D$ and $\mathcal E$ around the points
$(d_{\lambda_*},\lambda_*)$ and $(e_{\lambda_*},\lambda_*)$.
\end{proof}
\end{section}


\begin{thebibliography}{20}
\bibitem{And05} \textsc{M. T. Anderson}, \emph{On uniqueness and differentiability in the space of Yamabe metrics},
Commun.\ Contemp.\ Math.\ \textbf7 (2005), no.\ 3, 299–-310.

\bibitem{Aub76} \textsc{T. Aubin}, \emph{\'Equations diff\'erentielles non-lin\'eaires et probl\`eme de Yamabe concernant la
courbure scalaire}, J.\ Math.\ Pures Appl.\ \textbf{55} (1976), 269--296.

\bibitem{BerGauMaz} \textsc{M. Berger, P. Gauduchon, E. Mazet}, \emph{Le Spectre d'une Vari\'et\'e Riemannienne}, Lecture Notes in Mathematics \textbf{194}, Springer--Verlag,
Berlin, Heidelberg, New York, 1971.

\bibitem{Besse} \textsc{A. L. Besse}, \emph{Einstein manifolds}, reprint of the 1987 edition. Classics in Mathematics. Springer-Verlag, Berlin, 2008.

\bibitem{BohWanZil} \textsc{C. B\"ohm, M. Wang, W. Ziller},  \emph{A variational approach for compact homogeneous Einstein manifolds},
Geom.\ Funct.\ Anal.\ \textbf{14} (2004), no.\ 4, 681-–733.

\bibitem{HebVau} \textsc{E. Hebey, M. Vaugon}, \emph{Meilleures constantes dans le th\'eor\`eme d'inclusion de Sobolev et multiplicit\'e pour les probl\`emes de
Nirenberg et Yamabe}, Indiana Univ.\ Math.\ J.\ \textbf{41} (1992), no.\ 2, 377–-407.

\bibitem{JinLiXu} \textsc{Q. Jin, Y. Li, H. Xu}, \emph{Symmetry and asymmetry: the method of moving spheres}, Adv.\ Differential Equations \textbf{13} (2008), no.\ 7--8,
601-–640.

\bibitem{KhuMarSch} \textsc{M. A. Khuri, F. C. Marques, R. M. Schoen},  \emph{A compactness theorem for the Yamabe problem},
J. Differential Geom.\ \textbf{81} (2009), no.\ 1, 143-–196.

\bibitem{Koba87} \textsc{O. Kobayashi}, \emph{Scalar curvature of a metric with unit volume},
Math.\ Ann.\ \textbf{279} (1987), no.\ 2, 253-–265.

\bibitem{Koiso78} \textsc{N. Koiso}, \emph{On the second derivative of the total scalar curvature}, Osaka J. Math.\ \textbf{16} (1979), 413--421.

\bibitem{LeBrun99} \textsc{C. Lebrun}, \emph{Einstein metrics and the Yamabe problem}, Trends in mathematical physics (Knoxville, TN, 1998), 353–-376,
AMS/IP Stud.\ Adv.\ Math.\ \textbf{13}, Amer.\ Math.\ Soc., Providence, RI, 1999.

\bibitem{LiZhang} \textsc{Y. Y. Li, L. Zhang}, \emph{Compactness of solutions to the Yamabe problem. II},
Calc.\ Var.\ Partial Differential Equations \textbf{24} (2005), no.\ 2, 185-–237.

\bibitem{Mar} \textsc{F. C. Marques}, \emph{A priori estimates for the Yamabe problem in the non-locally conformally flat case},
J. Differential Geom.\ \textbf{71} (2005), no.\ 2, 315-–346.

\bibitem{Oba62} \textsc{M. Obata}, \emph{Certain conditions for a Riemannian manifold
to be isometric with a sphere}, J.\ Math.\ Soc.\ Japan Vol.\ 14, No.\ 3 (1962), 333-340.

\bibitem{Oba72} \textsc{M. Obata}, \emph{The conjectures on conformal transformations of Riemannian manifolds},
J. Differential Geometry \textbf6 (1971/72), 247-–258.

\bibitem{Petean08} \textsc{J. Petean}, \emph{Metrics of constant scalar curvature conformal to Riemannian products},
Proc.\ Amer.\ Math.\ Soc.\  \textbf{138}  (2010),  no.\ 8, 2897--2905.

\bibitem{Pol93} \textsc{D. Pollack},  \emph{Nonuniqueness and high energy solutions for a conformally invariant scalar equation},
Comm.\ Anal.\ Geom.\ \textbf1 (1993), no.\ 3--4, 347-–414.

\bibitem{Schoen84} \textsc{R. Schoen}, \emph{Conformal deformation of a Riemannian metric to constant scalar curvature}, J.
Diff.\ Geom.\ \textbf{20} (1984), 479--495.

\bibitem{Schoen87} \textsc{R. Schoen}, \emph{Variational theory for the total scalar curvature functional for Riemannian metrics
and related topics}, Topics in Calculus of Variations, Lecture Notes in Mathematics 1365
(1989), 120--154.

\bibitem{SmoWas} \textsc{J. Smoller, A. G. Wasserman}, \emph{Bifurcation and symmetry-breaking}, Invent.\
Math.\ \textbf{100} (1990), 63--95.

\bibitem{Tru68} \textsc{N. Trudinger}, \emph{Remarks concerning the conformal deformation of Riemannian structures on
compact manifolds}, Annali Scuola Norm.\ Sup.\ Pisa \textbf{22} (1968), 265--274.

\bibitem{Yam60} \textsc{H. Yamabe}, \emph{On a deformation of Riemannian structures on compact manifolds}, Osaka J.
Math.\ \textbf{12} (1960), 21--37.

\bibitem{Yau73} \textsc{S. T. Yau}, \emph{Remarks on conformal transformations}, J. Diff.\ Geom.\ \textbf8 (1973), 369--381.

\bibitem{Whi}
\textsc{B. White}, \emph{The space of $m$-dimensional surfaces that are stationary for a parametric
elliptic functional}, Indiana Univ.\ Math.\ J.\ \textbf{36} (1987), 567--602.

\bibitem{Whi2}
\textsc{B. White}, \emph{The space of minimal submanifolds for varying Riemannian
  metrics}, Indiana Univ.\ Math.\ J.\ \textbf{40} (1991), 161--200.
\end{thebibliography}
\end{document}